\documentclass[twoside]{irmaems}
\usepackage{amssymb} 
\usepackage{amsmath} 
\usepackage{latexsym}
\usepackage{amsxtra, amscd, mathrsfs, graphicx, multirow}
\usepackage{makeidx}
\usepackage[all]{xypic}

\setcounter{page}{1}

\theoremstyle{definition} 

 \newtheorem{definition}{Definition}[section]


\theoremstyle{plain}      

 \newtheorem{prop}[definition]{Proposition}
 \newtheorem{thm}[definition]{Theorem}
 \newtheorem{cor}[definition]{Corollary}
 \newtheorem{lem}[definition]{Lemma}
 \newtheorem{defn}[definition]{Definition}
 \newtheorem{rem}[definition]{Remark}
 \newtheorem{exs}[definition]{Examples}
 \newtheorem{ass}[definition]{Assumption}


\newcommand{\Hom}{\operatorname{Hom}}

\def\R{{\mathbb R}}
\def\Z{{\mathbb Z}}
\def\C{{\mathbb C}}
\def\Q{{\mathbb Q}}
\def\M{{\mathcal M}}

\def\Ext{\text{\rm Ext}}

\def\Hom{\text{\rm Hom}}
\def\Aut{\mathop{\rm Aut}}

\def\CH{\text{\rm CH}}
\def\G{\Gamma}

\allowdisplaybreaks

\markboth{Yuuki Tadokoro}{Harmonic volume and its applications}

\begin{document}

\title{Harmonic volume and its applications}

\author{Yuuki Tadokoro}

\address{
Natural Science Education,\\
Kisarazu National College of Technology, 2-11-1 Kiyomidai-Higashi,\\
Kisarazu, Chiba 292-0041, Japan\\
email:\,\tt{tado@nebula.n.kisarazu.ac.jp}
}

\maketitle

\begin{abstract}
The period is a classical complex analytic invariant for a compact Riemann surface defined by integration of differential 1-forms.
It has a strong relationship with the complex structure of the surface.
In this chapter, we review another complex analytic invariant called the harmonic volume.
It is a natural extension of the period defined using Chen's iterated integrals and captures more detailed information of the complex structure.
It is also one of a few explicitly computable examples of complex analytic invariants.
As an application, we give an algorithm in proving nontriviality for a class of homologically trivial algebraic cycles obtained from special compact Riemann surfaces.
The moduli space of compact Riemann surfaces is the space of all biholomorphism classes of compact Riemann surfaces.
The harmonic volume can be regarded as an analytic section of
a local system on the moduli space.
It enables a quantitative study of the local structure of the moduli space.
We explain basic concepts related to the harmonic volume and its applications of the moduli space.
\end{abstract}

\begin{classification}
14H40, 14H50, 30F30, 32G15.
\end{classification}

\begin{keywords}
Riemann surface, Harmonic volume, Iterated integral.
\end{keywords}

\tableofcontents

\section{Introduction}
\label{Introduction}
Let $\Gamma_g$ be the mapping class group of an oriented closed surface $\Sigma_g$ of genus $g\geq 2$.
This is the group of all isotopy classes of orientation-preserving diffeomorphisms of $\Sigma_g$.
The moduli space\index{moduli space} $\M_g=\mathcal{T}_g/{\Gamma_g}$ of compact Riemann surfaces of genus $g$ is the quotient space of the Teichm\"uller space $\mathcal{T}_g$ by the natural action of
the mapping class group $\Gamma_g$.
It is the space of all biholomorphism classes of compact Riemann surfaces of genus $g$.
The global structure of $\M_g$ is one of the most attractive subjects in mathematics.
For its study, topological methods such as cohomology and homotopy theory are often used.
This comes from the well-known fact that the natural properly discontinuous action of the mapping class group on the Teichm\"uller space $\mathcal{T}_g$ which is contractible yields a rational cohomology equivalence between the moduli space $\M_g$ and the classifying space of the mapping class group $\Gamma_g$.

In contrast, for the analytic approach, the universal family\index{universal family} $\pi: {\mathcal C}_g\to \M_g$ of compact Riemann surfaces of genus $g$ plays an important role.
The Abel-Jacobi map from a compact Riemann surface to its Jacobian variety has been studied by many people.
Here we focus on the fact that the Abel-Jacobi map can be regarded as a natural section of a local system on ${\mathcal C}_g$.
The {\it pointed harmonic volume}, the main theme of this chapter, is an extension of this framework.
We review the pointed harmonic volume and its applications.

Let $H$ be the set of real harmonic 1-forms on a compact Riemann surface with integral periods.
The Hodge theorem gives an identification of $H$ with the first integral cohomology group of $\Sigma_g$.
Let $k\geq 0$ be an integer.
The natural action of $\Gamma_g$ on $H$ gives a $\Gamma_g$-module structure on $\Hom_{\Z}(\wedge^{2k+1}H,\R/{\Z})$, where $\wedge^{2k+1}$ is the $(2k+1)$-th wedge product and the action on $\R/{\Z}$ is trivial.
This $\Gamma_g$-module gives rise to a local system $\mathcal{L}_k$ of free abelian groups on $\M_g$ by a pullback along $\pi$ on the universal family ${\mathcal C}_g$.
We consider two natural sections of $\mathcal{L}_k$.
The first is the well-known Abel-Jacobi map $A: {\mathcal C}_g\to \mathcal{L}_0$.
Let $X$ be a compact Riemann surface of genus $g\geq 2$ and $x\in X$ a base point.

The Abel-Jacobi map $A_{x}\colon X\to \Hom(H,\R/{\Z})$ is defined by
\[
A_{x}\colon X\ni y \mapsto \left(\omega\mapsto \int_{x}^{y}\omega
\right)\in \Hom(H,\R/{\Z}).
\]
The second is the \textit{pointed harmonic volume}
\[I: {\mathcal C}_g\to \mathcal{L}_1\]
studied by Harris \cite{0527.30032} and Pulte \cite{0678.14005}.
These two natural sections uniformly can be interpreted as an integration on the $(2k+1)$-th chain in the Jacobian variety of $X$.
See Section \ref{Abel-Jacobi maps and harmonic volume} for more details.
We focus on the latter.
We remark that a key object is the module $\Hom_{\Z}(\wedge^{3}H,\R/{\Z})$.

We define the pointed harmonic volume for a pointed compact Riemann surface $(X,x)$.
Let $K$ denote the kernel of $(\ , \ ) :H\otimes H \to \Z$ induced by the intersection pairing.
For $\sum_{i=1}^{n}a_i\otimes b_i\in K$
and $c\in H$,
the {\it pointed harmonic volume} is
defined to be a homomorphism $I_{(X,x)}\colon K\otimes H\to \R/{\Z}$
given by
\[I_{(X,x)}{\Biggl(}{\biggl(}\sum_{i=1}^{n}a_{i}\otimes b_{i}{\biggr)}\otimes c{\Biggr)}
=\sum_{i=1}^{n}\int_{\gamma}a_i b_i +\int_{\gamma}\eta
\quad \mathrm{mod} \ \mathbb{Z}.\]
Here the integral $\int_{\gamma}a_ib_i$ is Chen's iterated integral,
$\gamma$ is a loop in $X$ with base point $x$ whose
homology class is the Poincar\'e dual of $c$, and
$\eta$ is a unique smooth 1-form on $X$ satisfying the two conditions
$d\eta +\sum_{i=1}^{n} a_i\wedge b_i =0$
and
$\int_{X}\eta\wedge \ast\alpha =0$
for any closed 1-form $\alpha$ on $X$.
Through a natural homomorphism $K\otimes H\to \wedge^3 H$,
the pointed harmonic volume $I_{(X,x)}$ can be regarded as an element
of $\Hom_{\Z}(\wedge^3 H, \R/{\Z})$.

The pointed harmonic volume has been studied in the context of, for example,
algebraic geometry, number theory, complex analysis, and topology.
It is defined using Chen's \cite{0389.58001} iterated integrals, which were originally used in the context of differential geometry on loop spaces.
The pointed harmonic volume has the following three features.
First, as an extension of the Abel-Jacobi map,
the pointed harmonic volume captures the geometric information of pointed compact Riemann surfaces.
Actually, it induces the mixed Hodge structure of the truncation on the fundamental group ring of a pointed compact Riemann surface.
Using this, we obtain the so-called pointed Torelli theorem
stating that the pointed harmonic volume determines
the structure of the moduli space of pointed compact Riemann surfaces.
Second, the pointed harmonic volume is
explicitly computable for a number of special
compact Riemann surfaces including the Fermat curves.
In fact, there are not so many computable analytic invariants.
The pointed harmonic volume enables a quantitative study of the local structure of $\M_g$ and ${\mathcal C}_g$.
For example, we explain the nontriviality of the Ceresa cycle, see Section
\ref{The Chow group}, in the Jacobian varieties of compact Riemann surfaces.
Third, we can also compute the first variation of the pointed harmonic volume.
It is a twisted 1-form on ${\mathcal C}_g$ representing
the first extended Johnson homomorphism \cite{zbMATH00179521} on the mapping class group of a pointed oriented closed surface.
In other words, the first variation represents the first Morita-Mumford class or tautological class $e_1\in H^2(\M_g; \Z)$ \cite{ zbMATH03927904, zbMATH03882570}.
From the viewpoint of Hodge theory, Hain and Reed \cite{zbMATH05033769} obtained a similar result.
Since Kawazumi \cite{1170.30001} has reviewed the last of these, we make no mention of it here.
See Kawazumi's chapter \cite{1170.30001} for more details.
We remark that few values of the first variation of the harmonic volume
for a specific compact Riemann surface are known.
For yet another method for the harmonic volume,
we recommend the survey \cite{zbMATH02030179} by Hain.

In the following, we overview the theory of the harmonic volume.
More details will be given in subsequent sections.
Harris \cite{0527.30032} defined the harmonic volume
for compact Riemann surfaces using Chen's \cite{0389.58001}
iterated integrals of 1-forms of length 2.
See Section \ref{Iterated integrals} for the iterated integrals
and \ref{Harmonic volume} for the harmonic volume.
Prior to the harmonic volume,
Harris \cite{zbMATH03788720} studied an explicit formula of triple products of holomorphic cusp forms for $\mathrm{PSL}(2,\Z)$,
in which a prototype of the harmonic volume appears.
We define the harmonic volume.
Let $(H^{\otimes 3})^{\prime}$ be the kernel of
the natural homomorphism $p: H^{\otimes 3}
\ni a\otimes b\otimes c\mapsto ((a,b)c,(b,c)a,(c,a)b)\in H^{\oplus 3}$
induced by the intersection pairing on $H$.
It is a subgroup of $K\otimes H$.
The {\it harmonic volume} $I_X$ for $X$
is a restriction of the pointed harmonic volume $I_{(X,x)}$:
\[I_{X}=I_{(X,x)}|_{(H^{\otimes 3})^{\prime}}\colon
(H^{\otimes 3})^{\prime}\to \R/{\Z}.\]
It depends only on the complex structure of $X$,
and not on the choice of a Hermitian metric and base point.
The mapping class group $\Gamma_g$ acts naturally on $H$.
This induces the diagonal action of $\Gamma_g$ on
$\Hom_{\Z}((H^{\otimes 3})^{\prime}, \R/{\Z})$.
Let $\Aut X$ denote the group of biholomorphisms of $X$.
By construction, $I_X$ is $\Aut X$-invariant.
It can be regarded as a real analytic section of a local system on $\M_g$
obtained by the $\Gamma_g$-module $\Hom_{\Z}((H^{\otimes 3})^{\prime}, \R/{\Z})$.
The harmonic volume $2I_{X}$ for hyperelliptic curves is known to be trivial.
Nevertheless, the zero locus of the harmonic volume for nonhyperelliptic
curves is unknown.
Moreover the harmonic volume $I_X$ can be interpreted as the volume of
a 3-chain in the torus $\R^3/{\Z^3}$.
From this viewpoint, Faucette \cite{0783.14003}
extended it to the higher-dimensional harmonic volume.
See the end of Section \ref{Pointed Torelli Theorem}.

In Section \ref{The Chow group},
we introduce some basic concepts about algebraic cycles
on an algebraic variety, in particular, the Jacobian variety
$J(X)$ (or $J$) of $X$.
Weil \cite[pp. 331]{0168.18701} asked whether homologically trivial algebraic cycles are algebraically nontrivial in $J$.
That is, whether the Griffiths group is nontrivial, or
roughly speaking, whether these algebraic cycles can be ``continuously'' (algebraically)
deformed into the zero cycle.
By the Abel-Jacobi map $X\to J$, $X$ is embedded in $J$.
Its image of the $k$-th symmetric product of $X$ is denoted by $W_k$.
The algebraic $k$-cycle $W_k-W_k^{-}$ in $J$
is known to be homologous to zero.
We denote by $W_k^{-}$ the image of $W_k$
under multiplication by $-1$.
The cycle $W_k-W_k^{-}$ is called the $k$-th Ceresa cycle
for $1\leq k\leq g-1$.
It is one of the main subjects of this chapter.
If $X$ is hyperelliptic or $k=g-1$, then
the $k$-th Ceresa cycle is trivial.
Ceresa \cite{zbMATH03855282} proved that the $k$-th Ceresa cycle
is algebraically nontrivial in $J(X)$ for a generic compact Riemann surface
$X$ of genus $g\geq 3$ for $1\leq k\leq g-2$.

Section \ref{Abel-Jacobi maps and harmonic volume} establishes the relation between a higher Abel-Jacobi map and the harmonic volume.
In preparation, we briefly sketch the Hodge structure of a $\Z$-module of finite rank.
As the harmonic volume is the volume of a 3-chain in $\R^3/{\Z^3}$,
it is regarded as a point of an intermediate Jacobian of $J$.
Griffiths \cite{MR0260733} defined the $k$-th Abel-Jacobi map $\Phi_k$ from the
$k$-dimensional homologically trivial algebraic cycles on $J$ to the intermediate Jacobian of $J$ which is isomorphic to $\Hom_{\Z}(\wedge^{2k+1}H,\R/{\Z})$ here.
By integration on a 3-chain which bounds the first Ceresa cycle $W_1-W_1^{-}$,
the harmonic volume $2I_X$
can be identified with its image by the first Abel-Jacobi map of Griffiths,
{\it i.e.},
\[
2I_X=\Phi_1(W_1-W_1^{-})\in \Hom_{\Z}(\wedge^{3}H,\R/{\Z}).
\]
This identification induces a sufficient condition for the nontriviality of the Ceresa cycle in $J$ by computing some values of $2I_{X}$.

In Section \ref{Pointed Torelli Theorem},
we review the pointed Torelli theorem
as an application of the pointed harmonic volume.
Hain \cite{0654.14006}
defined a mixed Hodge structure on
the truncation of the fundamental group ring
of complex manifolds
by means of Chen's iterated integrals of 1-forms.
This is a different approach to that of Morgan \cite{zbMATH03621923}.
The classical Torelli theorem says that a compact Riemann surface $X$
is determined by its Jacobian variety $J(X)$ regarded as a complex torus.
Let $\pi_1(X, x)$ be the fundamental group of $X$ with base point $x$.
We define the augmentation ideal $J_{x}$ of the group ring $\Z\pi_1(X,x)$ by
the kernel of the map
$\Z\pi_1(X, x)\ni \sum a_{\gamma} \gamma \mapsto \sum a_{\gamma} \in \Z$.
The $k$-th power of the augmentation ideal is denoted by $J_{x}^{k}$.
From Chen's $\pi_1$ de Rham theorem, see Section \ref{Iterated integrals}, the pointed harmonic volume belongs to $\Hom_{\Z}(J_{x}/{J_{x}^{3}},\R)$.
The pointed Torelli theorem states that
the mixed Hodge structure on $\Hom_{\Z}(J_{x}/{J_{x}^{3}},\Z)$
determines the structure of ${\mathcal C}_g$.
Hain \cite{0654.14006} and Pulte \cite{0678.14005}
gave a proof using the pointed harmonic volume and an extension obtained by
a natural short exact sequence of mixed Hodge structures
\[
\SelectTips{cm}{}\xymatrix{
0 \ar[r] & H
\ar[r]& \Hom_{\Z}(J_{x}/{J_{x}^{3}},\Z)
\ar[r]& K
\ar[r] & 0.}
\]
In the proof of this theorem, the classical Torelli theorem
follows from the preservation of extensions of the mixed Hodge structures.
We obtain the biholomorphism of compact Riemann surfaces.
When we distinguish the base points,
the pointed harmonic volume plays an important role.
Section \ref{Pointed Torelli Theorem}
is devoted to this theorem.

In Section \ref{Nontrivial algebraic cycles in the Jacobian varieties},
we deal with an application of the harmonic volume, the nontriviality of the Ceresa cycle $W_k-W_k^{-}$ in the Jacobian variety $J(X)$.
The harmonic volume captures information about how $X$ is embedded in $J$.
From the nonvanishing of the first variation of the harmonic volume for special families of hyperelliptic curves, Harris \cite{0527.30032} obtained a result similar to Ceresa's.
The problem here is to give an explicitly compact Riemann surface satisfying the nontriviality condition.
Harris \cite{0523.14006} extended the harmonic volume with values
in $\C/{\Z[\sqrt{-1}]}$ and proved that
the first Ceresa cycle for the Fermat quartic
$F_4$ of genus 3
is algebraically nontrivial in $J(F_4)$.
Faucette \cite{0764.14015} extended this and
gave the proof that
the second Ceresa cycle for an unramified
double covering of $F_4$ is algebraically nontrivial.
Bloch \cite{0527.14008} further studied $F_4$
by means of $L$-functions.
There are a few explicit nontrivial examples apart from this.
In the following, we focus on nontriviality for the first Ceresa cycle.
The author proves in \cite{1222.14058}
nontriviality for the Klein quartic $K_4$ of genus 3
by a slight modification of this method.
The curve $K_4$ has a representation as a branched cyclic covering of $\C P^1$.
From this, we can compute certain values of the harmonic volume for $K_4$
by special values of the generalized hypergeometric function ${}_3F_2$.
See Section \ref{A generalized hypergeometric function} for ${}_3F_2$.
This method sheds some new light on nontriviality for the Ceresa cycles.
Moreover, he proves in \cite{1184.14018} nontriviality for the Fermat sextic
$F_6$ of genus 10 using this method, but applying it directly to other Fermat curves is difficult.
Let $\mathcal{O}$ denote the integer ring of the $N$-th cyclotomic field for a positive integer $N\geq 4$.
Otsubo \cite{1236.14009} extended the harmonic volume with values in
$(\mathcal{O}\otimes_{\Z}\R)/{\mathcal{O}}$.
He obtained its value and the algorithm in proving nontriviality for the $N$-th Fermat curves $F_{N}$ for any $N\geq 4$.
See Sections \ref{Fermat curves} and \ref{A generalized hypergeometric function}.
Moreover, he pointed out a relation between the harmonic volume and number theory using $L$-functions including nontorsionness of the higher Abel-Jacobi image $\Phi_{k}(W_k-W_k^{-})$ and the conjecture of Swinnerton-Dyer.
Using his method, we \cite{1009.0096} recently obtained nontriviality for some cyclic quotient $C_{N}$ of the Fermat curve.
Here $N$ is a prime number with $N=1$ modulo $3$.
See Section \ref{Cyclic quotients of Fermat curves}.
We remark that certain values of the harmonic volume for special
Riemann surfaces were calculated, but all the values are unknown.

\section{Preliminaries}
\subsection{Iterated integrals}
\label{Iterated integrals}
To define the harmonic volume for a compact Riemann surface,
we need to recall Chen's iterated integrals on a smooth manifold.
They have been developed
in various theories including the de Rham homotopy theory.
We introduce iterated integrals of 1-forms and
$\pi_1$ de Rham theorem which
states that the cohomology of the loop space of the smooth manifold can be
calculated by means of these integrals.
For a treatment of a more general case, we refer to
Chen \cite{0389.58001} and Hain \cite{0654.14006}.

Let $M$ be a smooth manifold and $\Omega^i(M)$ the smooth $i$-forms on $M$.
\begin{defn}
Denote $\gamma\colon [0,1]\to M$ as a piecewise smooth path.
For $\omega_1,\omega_2,\ldots,\omega_n\in \Omega^1(M)$,
we define an iterated (path) integral\index{iterated integral} of $\omega_1,\omega_2,\ldots,\omega_n$
along $\gamma$ by
\[
\int_{\gamma}\omega_1\omega_2\cdots \omega_n
=\idotsint_{0\leq t_1\leq t_2 \leq \cdots \leq t_n\leq 1}
f_1(t_1)f_2(t_2)\cdots f_n(t_n)dt_1dt_2\cdots dt_n,
\]
where $f_j(t)dt$ is the pullback $\gamma^{\ast}\omega_j$.
The number $n$ is called the length.
\end{defn}
The iterated integral is independent of the choice
of the parameterization of the path $\gamma$.
We introduce fundamental properties of iterated integrals.
Let $PM$ be the set of piecewise smooth paths in $M$.
For paths $\alpha,\beta\in PM$ such that
$\alpha(1)=\beta(0)$,
we denote their product by $\alpha\cdot \beta\in PM$ as usual.
It is easy to prove
\begin{lem}[\cite{0097.25803}]
\label{product of paths}
If $\omega_1,\omega_2,\ldots,\omega_n\in \Omega^1(M)$,
then we have
\[
\int_{\alpha\cdot\beta}\omega_1\omega_2\cdots \omega_n
=\sum_{i=0}^{r}\int_{\alpha}\omega_1\cdots \omega_i
\int_{\beta}\omega_{i+1}\cdots \omega_n.
\]
Here, we set $\int_{\gamma}\omega_1\omega_2\cdots \omega_n=1$
for $n=0$.
\end{lem}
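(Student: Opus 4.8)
The plan is to reduce the statement to a purely combinatorial decomposition of the domain of integration, which is the ordered simplex $\{0\le t_1\le \cdots \le t_n\le 1\}$. Since the iterated integral does not depend on the parameterization, I would first fix a convenient representative of the product path: choose some $c\in(0,1)$ and parameterize $\alpha\cdot\beta$ so that $\alpha$ is traversed on $[0,c]$ and $\beta$ on $[c,1]$. With this choice the pullback $(\alpha\cdot\beta)^{\ast}\omega_j=f_j(t)\,dt$ is governed by $\alpha$ for $t<c$ and by $\beta$ for $t>c$; the path is only piecewise smooth at $t=c$, but this is exactly the regularity allowed in the definition.

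The key step is the decomposition of the simplex. For a point $(t_1,\dots,t_n)$ with $0\le t_1\le\cdots\le t_n\le 1$, monotonicity forces the existence of a unique index $i\in\{0,1,\dots,n\}$ with $t_1\le\cdots\le t_i\le c\le t_{i+1}\le\cdots\le t_n$. I would write the simplex, up to a subset of measure zero (those points where some $t_j=c$), as the disjoint union over $i$ of the products
\[
\{0\le t_1\le\cdots\le t_i\le c\}\times\{c\le t_{i+1}\le\cdots\le t_n\le 1\}.
\]
Because $f_1,\dots,f_i$ depend only on the $\alpha$-portion and $f_{i+1},\dots,f_n$ only on the $\beta$-portion, the integrand factors on each piece, and Fubini separates the integral into a product of an integral over the first block of variables and one over the second.

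It then remains to identify each factor with a genuine iterated integral over $\alpha$ or $\beta$. Rescaling $[0,c]\to[0,1]$ for the first block and $[c,1]\to[0,1]$ for the second, and invoking parameterization-independence once more, shows that the first factor equals $\int_{\alpha}\omega_1\cdots\omega_i$ and the second equals $\int_{\beta}\omega_{i+1}\cdots\omega_n$. Summing over $i$ from $0$ to $n$ yields the asserted formula, with the boundary cases $i=0$ and $i=n$ accounted for by the convention $\int_{\gamma}\omega_1\cdots\omega_n=1$ for $n=0$.

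The only point requiring care is the bookkeeping at the junction $t=c$: the sets where some coordinate equals $c$ have Lebesgue measure zero and so do not contribute, so the disjoint-union decomposition is valid as far as the integral is concerned. I do not expect any serious obstacle here; the entire content of the lemma is the combinatorics of slicing the ordered simplex at the level $c$, and the analytic input is limited to Fubini's theorem together with the already-noted invariance under reparameterization.
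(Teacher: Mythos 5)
Your proof is correct and is precisely the standard argument that the paper defers to Chen's original article for (the paper itself offers no proof, remarking only that the lemma ``is easy to prove''): slice the ordered simplex $\{0\le t_1\le\cdots\le t_n\le 1\}$ at the concatenation parameter, discard the measure-zero overlaps where some $t_j=c$, factor the integrand, apply Fubini, and reparameterize each block. Note that you also implicitly correct a typo in the statement: the upper summation limit should be $n$, not $r$.
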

\begin{exs}
\label{examples for product of paths}
\begin{align*}
\int_{\alpha\cdot \beta}\omega_1\omega_2
&=\int_{\alpha}\omega_1\omega_2+\int_{\alpha}\omega_1\int_{\beta}\omega_2
+\int_{\beta}\omega_1\omega_2,\\
\int_{\alpha\cdot \beta\cdot \gamma}\omega_1\omega_2
&=\left(\int_{\alpha}+\int_{\beta}+\int_{\gamma}\right)\omega_1\omega_2
+\int_{\alpha}\omega_1\int_{\beta}\omega_2
+\int_{\alpha}\omega_1\int_{\gamma}\omega_2
+\int_{\beta}\omega_1\int_{\gamma}\omega_2.
\end{align*}
\end{exs}

A permutation $\sigma$ of $\{1,2,\ldots, r+s\}$
is a {\it shuffle} of type $(r,s)$ if
\[
\sigma^{-1}(1)<\sigma^{-1}(2)<\cdots <\sigma^{-1}(r)
\]
and
\[
\sigma^{-1}(r+1)<\sigma^{-1}(r+2)<\cdots <\sigma^{-1}(r+s).
\]
The following formula was derived by Ree \cite{zbMATH03136708}.
\begin{prop}[Shuffle relation]
\label{Shuffle relation}
Let $\omega_1,\omega_2,\ldots,\omega_{r+s}\in \Omega^1(M)$.
Then we have
\[
\int_{\alpha}\omega_1\omega_2\cdots \omega_r
\int_{\alpha}\omega_{r+1}\omega_{r+2}\cdots \omega_{r+s}
=\sum_{\sigma}\int_{\alpha}\omega_{\sigma(1)}\omega_{\sigma(2)}
\cdots \omega_{\sigma(r+s)},
\]
where $\sigma$ runs over the shuffles of type $(r,s)$.
\end{prop}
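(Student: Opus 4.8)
The plan is to write each iterated integral as an integral over an ordered simplex, realize the product of the two integrals as a single integral over the product of two simplices, and then decompose that product region into pieces indexed precisely by the shuffles of type $(r,s)$.

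First I would unwind the definition. Writing $\alpha^{\ast}\omega_j = f_j(t)\,dt$, the two factors on the left-hand side are
\[
\int_{\alpha}\omega_1\cdots\omega_r=\int_{\Delta_r}f_1(t_1)\cdots f_r(t_r)\,dt_1\cdots dt_r,
\qquad
\int_{\alpha}\omega_{r+1}\cdots\omega_{r+s}=\int_{\Delta_s}f_{r+1}(t_{r+1})\cdots f_{r+s}(t_{r+s})\,dt_{r+1}\cdots dt_{r+s},
\]
where $\Delta_r=\{0\le t_1\le\cdots\le t_r\le 1\}$ and $\Delta_s$ is the analogous ordered simplex in the variables $t_{r+1},\ldots,t_{r+s}$. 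By Fubini's theorem the product of these two integrals equals the single integral of $F(t)=f_1(t_1)\cdots f_{r+s}(t_{r+s})$ over the product region
\[
\Delta_r\times\Delta_s=\{t\in[0,1]^{r+s}:t_1\le\cdots\le t_r,\ t_{r+1}\le\cdots\le t_{r+s}\}.
\]

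The heart of the argument is a combinatorial decomposition of this product. For a point $t$ with pairwise distinct coordinates there is a unique permutation $\sigma$ of $\{1,\ldots,r+s\}$ sorting them, that is, with $t_{\sigma(1)}<t_{\sigma(2)}<\cdots<t_{\sigma(r+s)}$; the two defining inequalities of $\Delta_r\times\Delta_s$ translate exactly into $\sigma^{-1}(1)<\cdots<\sigma^{-1}(r)$ and $\sigma^{-1}(r+1)<\cdots<\sigma^{-1}(r+s)$, so $\sigma$ is a shuffle of type $(r,s)$. Conversely, for each such shuffle the closed region $S_\sigma=\{t_{\sigma(1)}\le\cdots\le t_{\sigma(r+s)}\}$ is contained in $\Delta_r\times\Delta_s$, and two distinct shuffles yield regions meeting only on the measure-zero locus where some coordinates coincide. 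Hence $\Delta_r\times\Delta_s=\bigcup_{\sigma}S_\sigma$ up to measure zero, and
\[
\int_{\alpha}\omega_1\cdots\omega_r\int_{\alpha}\omega_{r+1}\cdots\omega_{r+s}=\sum_{\sigma}\int_{S_\sigma}F(t)\,dt.
\]

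Finally, in the term indexed by $\sigma$ I would relabel coordinates by the volume-preserving substitution $u_k=t_{\sigma(k)}$. This carries $S_\sigma$ onto the standard simplex $\{0\le u_1\le\cdots\le u_{r+s}\le 1\}$ and sends $F(t)=\prod_i f_i(t_i)=\prod_k f_{\sigma(k)}(t_{\sigma(k)})$ to $\prod_k f_{\sigma(k)}(u_k)$, so that $\int_{S_\sigma}F\,dt=\int_{\alpha}\omega_{\sigma(1)}\cdots\omega_{\sigma(r+s)}$. Summing over all shuffles then yields the asserted identity. I expect the decomposition step to be the only genuinely delicate point: one must verify both that every generic point of the product simplex is sorted by a shuffle and that no non-shuffle permutation contributes, which is exactly where the precise form of the defining inequalities for shuffles of type $(r,s)$ is used.
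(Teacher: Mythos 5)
Your proof is correct. Note, however, that the paper itself gives no proof of this proposition: it simply states the shuffle relation and attributes it to Ree, whose original 1958 argument is algebraic--combinatorial (carried out in the shuffle algebra of formal power series in noncommuting variables) rather than integral-geometric. What you have written is the standard self-contained geometric proof: realize the product of the two iterated integrals, via Fubini, as an integral over $\Delta_r\times\Delta_s$, decompose that region up to measure zero into the simplices $S_\sigma$ indexed by shuffles $\sigma$ of type $(r,s)$, and identify each $\int_{S_\sigma}$ with the iterated integral $\int_\alpha\omega_{\sigma(1)}\cdots\omega_{\sigma(r+s)}$ by the coordinate permutation $u_k=t_{\sigma(k)}$. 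All three steps are handled correctly, including the one genuinely delicate point you flag yourself: the equivalence between the sorting condition on a generic point of $\Delta_r\times\Delta_s$ and the defining inequalities $\sigma^{-1}(1)<\cdots<\sigma^{-1}(r)$, $\sigma^{-1}(r+1)<\cdots<\sigma^{-1}(r+s)$, which is exactly what guarantees that shuffles, and only shuffles, appear in the decomposition. Your argument has the advantage of being elementary and of working verbatim for piecewise smooth paths (the $f_j$ are merely integrable, so Fubini and the measure-zero overlaps cause no trouble); Ree's algebraic route buys more, namely the shuffle identity at the level of formal power series independent of any integration, but that generality is not needed for the proposition as stated.
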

\begin{exs}
\begin{align*}
\int_{\alpha}\omega_1\int_{\alpha}\omega_2
&=\int_{\alpha}\omega_1\omega_2+\int_{\alpha}\omega_2\omega_1,\\
\int_{\alpha}\omega_1\int_{\alpha}\omega_2\omega_3
&=\int_{\alpha}\omega_1\omega_2\omega_3+\int_{\alpha}\omega_2\omega_3\omega_1
+\int_{\alpha}\omega_2\omega_1\omega_3.
\end{align*}
\end{exs}

A function $F\colon PM\to \R^{m}$ is a {\it homotopy functional}\index{homotopy functional} if
for every $\gamma\in PM$ $F(\gamma)$ depends only on the homotopy class of $\gamma$
relative to its endpoints.
For each $x\in M$, a homotopy functional $F$ induces a function
$\pi_1(M, x)\to \R$.
It is well-known that a function
\[
\int\omega\colon PM\ni \gamma\mapsto \int_{\gamma}\omega\in \R
\]
is a homotopy functional if and only if $\omega\in \Omega^{1}(M)$ is closed.
In general, even if $\omega_1,\omega_2\in \Omega^{1}(M)$ are closed, a function
\[
\int\omega_1\omega_2\colon PM\ni \gamma\mapsto
\int_{\gamma}\omega_1\omega_2\in \R
\]
may not be a homotopy functional.
For iterated integrals of length 2,
we need a correction term $\eta\in \Omega^1(M)$.
\begin{prop}
\label{homotopy functionals}
Let $\omega_1,\omega_2,\ldots,\omega_r, \eta\in \Omega^1(M)$
and $c_{ij}\in \R$ for $1\leq i,j\leq r$.
Suppose that each $\omega_i$ is closed.
Then
\[
\sum c_{ij}\int \omega_i\omega_j +\int \eta
\]
is a homotopy functional if and only if
\[
d\eta +\sum c_{ij}\omega_i\wedge \omega_j =0.
\]
\end{prop}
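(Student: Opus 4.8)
The plan is to show that $F=\sum c_{ij}\int\omega_i\omega_j+\int\eta$ is a homotopy functional precisely when its derivative along every homotopy rel endpoints vanishes, and to reduce that derivative to a single surface integral of the 2-form $\Theta:=d\eta+\sum c_{ij}\,\omega_i\wedge\omega_j$. Concretely, I would fix a smooth homotopy $H\colon[0,1]\times[0,1]\to M$, $(s,t)\mapsto H(s,t)$, with the endpoints $H(s,0)$ and $H(s,1)$ independent of $s$, and set $\gamma_s=H(s,\cdot)$. Writing the pullbacks as $H^*\omega_i=a_i\,ds+b_i\,dt$ and $H^*\eta=\alpha\,ds+\beta\,dt$ for functions on the square, the object to analyze is $\frac{d}{ds}F(\gamma_s)$, since $F$ is a homotopy functional if and only if this vanishes for every such $H$ (the fixed-endpoint homotopy classes are connected, so a vanishing derivative forces $F$ to be constant on each, and conversely).

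First I would differentiate the length-two piece $G_{ij}(s)=\iint_{0\le t_1\le t_2\le 1}b_i(s,t_1)b_j(s,t_2)\,dt_1\,dt_2$ under the integral sign. The hypothesis that each $\omega_i$ is closed gives $d(H^*\omega_i)=0$, i.e. $\partial_s b_i=\partial_t a_i$, which lets me trade the $s$-derivatives of the $b$'s for $t$-derivatives of the $a$'s and then integrate by parts in $t_1$ and $t_2$. The boundary terms carry factors $a_i(s,0)$ and $a_j(s,1)$; these vanish because the endpoints of $\gamma_s$ are fixed, so $\partial_s H=0$ there and hence the $ds$-components $a_i=\omega_i(\partial_s H)$ die on $t=0,1$. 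What survives is exactly $\frac{d}{ds}G_{ij}(s)=\int_0^1(a_ib_j-b_ia_j)(s,t)\,dt$, and $a_ib_j-b_ia_j$ is precisely the coefficient of $ds\wedge dt$ in $H^*(\omega_i\wedge\omega_j)$.

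Next I would treat the correction term in the same spirit: $\frac{d}{ds}\int_{\gamma_s}\eta=\int_0^1\partial_s\beta\,dt$, and $d(H^*\eta)=(\partial_s\beta-\partial_t\alpha)\,ds\wedge dt=H^*d\eta$ gives $\partial_s\beta=(H^*d\eta)(\partial_s,\partial_t)+\partial_t\alpha$; integrating $\partial_t\alpha$ over $[0,1]$ produces only $\alpha(s,1)-\alpha(s,0)=0$, again by the fixed-endpoint condition. Assembling the pieces yields the key identity
\[
\frac{d}{ds}F(\gamma_s)=\int_0^1\big(H^*\Theta\big)(\partial_s,\partial_t)\,dt,\qquad \Theta=d\eta+\textstyle\sum c_{ij}\,\omega_i\wedge\omega_j,
\]
where $\big(H^*\Theta\big)(\partial_s,\partial_t)$ denotes the coefficient of $ds\wedge dt$. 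If $\Theta=0$, the right-hand side vanishes for every $H$, so $F$ is a homotopy functional; this is the easy implication.

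For the converse I would argue contrapositively. If $\Theta_{x_0}\ne 0$ at some point, I would pick tangent vectors on which $\Theta_{x_0}$ is nonzero and build, in a coordinate chart around $x_0$, an explicit small homotopy $H$ rel endpoints whose image is tangent to that plane and for which $\int_0^1(H^*\Theta)(\partial_s,\partial_t)\,dt$ is, to leading order in the scale parameter, a nonzero multiple of the value of $\Theta_{x_0}$ on that plane. Then $\frac{d}{ds}F(\gamma_s)\ne 0$ for some $s$, so $F$ fails to be a homotopy functional. I expect the integration-by-parts bookkeeping of the second step---tracking which boundary contributions survive and confirming that the fixed-endpoint hypothesis kills them---and the explicit local construction in the converse to be the two places that demand the most care; the remainder is a routine application of Stokes' theorem on the square $[0,1]\times[0,1]$.
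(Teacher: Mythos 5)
Your proof is correct, but it takes a genuinely different route from the paper's. The paper disposes of this proposition in one line: ``This can be verified by lifting the integral to the universal covering of $M$.'' That argument runs as follows: on $\pi\colon\widetilde M\to M$ each closed $\omega_i$ has a primitive $\tilde f_i$ with $d\tilde f_i=\pi^{\ast}\omega_i$, normalized to vanish at the lift of the initial point; then $\int_{\gamma}\omega_i\omega_j=\int_{\tilde\gamma}\tilde f_i\,\pi^{\ast}\omega_j$, so the whole functional becomes the ordinary line integral of $\Phi=\sum c_{ij}\tilde f_i\,\pi^{\ast}\omega_j+\pi^{\ast}\eta$ over the lifted path. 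On the simply connected $\widetilde M$ such an integral is a homotopy functional iff $\Phi$ is closed (the length-one criterion the paper states as well known), and $d\Phi=\pi^{\ast}\bigl(d\eta+\sum c_{ij}\omega_i\wedge\omega_j\bigr)$, which vanishes iff $d\eta+\sum c_{ij}\omega_i\wedge\omega_j=0$. You instead work one level down, re-deriving Chen's first-variation formula $\frac{d}{ds}F(\gamma_s)=\int_0^1(H^{\ast}\Theta)(\partial_s,\partial_t)\,dt$ directly: your integration by parts is right (closedness gives $\partial_s b_i=\partial_t a_i$, and the boundary terms die because $a_i=\omega_i(\partial_s H)$ vanishes at $t=0,1$), and your small-disk family for the converse does yield $\iint H^{\ast}\Theta\approx\Theta_{x_0}(u,v)\cdot(\text{area})\neq 0$, hence two homotopic paths on which $F$ differs. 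What your route buys: it is self-contained (no covering space, and no appeal to the classical criterion, which is itself proved by exactly your computation), and the variation formula has independent value---it is the mechanism behind the ``first variation of the harmonic volume'' mentioned in the paper's introduction. What the paper's route buys: brevity, and it bypasses the one technical point you should still make explicit, namely that homotopy classes rel endpoints are defined through continuous (or piecewise smooth) homotopies, so in the direction ``$\Theta=0\Rightarrow F$ is a homotopy functional'' you must first replace a given fixed-endpoint homotopy by a smooth one (Whitney approximation rel endpoints) before differentiating under the integral sign; this smoothing step is standard but should be stated.
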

This can be verified by lifting the integral to the universal covering of $M$.
Let $B_{s}(M)$ be the vector space of finite linear combinations
of iterated integrals on $M$ of length $\leq s$.
Each elements of $B_{s}(M)$ is represented by
\[
I=\lambda +\sum a_i\int \omega_i
+\sum a_{ij}\int \omega_i \omega_j
+\cdots +\sum_{|J|=s}a_{J}\int \omega_{j_1}\omega_{j_2}\cdots \omega_{j_s}.
\]
For the group ring $\Z\pi_1(X, x)$,
we define the augmentation ideal\index{augmentation ideal} $J_{x}$ by
the kernel of the map
$\Z\pi_1(X, x)\ni \sum a_{\gamma} \gamma \mapsto \sum a_{\gamma} \in \Z$.
It is clear that $J_{x}$ is generated by $\gamma -1$ for any loop
$\gamma$ with base point $x$.
Here $1$ denotes the constant path at $x$.
The $k$-th power of the augmentation ideal is denoted by $J_{x}^{k}$.
For $I=\int \omega\in B_1(M)$ and $(\alpha-1)(\beta-1)\in J_{x}^{2}$,
we have immediately
\[
\langle I, (\alpha-1)(\beta-1)\rangle =0,
\]
where $\alpha, \beta$ are loops with base point $x$.
Moreover, Examples \ref{examples for product of paths} imply
\[
\langle I, (\alpha-1)(\beta-1)(\gamma-1)\rangle =0
\]
for $I\in B_2(M)$ and $(\alpha-1)(\beta-1)(\gamma-1)\in J_{x}^{3}$.
We generalize these expressions.
\begin{lem}
\label{values of augmentation ideals}
Let $\alpha_1, \alpha_2, \ldots, \alpha_s$ denote loops with base point $x$.
If $I\in B_{r}(M)$ and $r<s$, then we have
\[
\langle I, (\alpha_1-1)(\alpha_2-1)\cdots(\alpha_s-1) \rangle =0.
\]
\end{lem}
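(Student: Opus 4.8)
The plan is to reduce to a single iterated integral and then induct on the number $s$ of loop factors, peeling off the last factor $\alpha_s-1$ by means of the product-of-paths formula (Lemma \ref{product of paths}). By linearity of the pairing in its first argument, a general element of $B_r(M)$ is a finite linear combination of iterated integrals (plus a constant term), each of length $\leq r < s$; so it suffices to treat a single $I=\int\omega_1\omega_2\cdots\omega_n$ of a fixed length $n$ with $n<s$.

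For the base case I would take $n=0$, i.e.\ $I=\lambda$ a constant functional. Since the augmentation $\Z\pi_1(X,x)\to\Z$ is a ring homomorphism and each factor $\alpha_i-1$ has augmentation $0$, the product $(\alpha_1-1)\cdots(\alpha_s-1)$ lies in the augmentation ideal and therefore pairs to $\lambda\cdot 0=0$ against a constant functional. This settles every length-$0$ case at once, in particular $s=1$.

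For the inductive step, write $P=(\alpha_1-1)\cdots(\alpha_{s-1}-1)$, so the element in question is $P(\alpha_s-1)$ and $\langle I, P(\alpha_s-1)\rangle=\langle I, P\alpha_s\rangle-\langle I, P\rangle$. Expanding $P=\sum_w c_w\,w$ as a $\Z$-linear combination of concatenated loops $w=\alpha_{i_1}\cdots\alpha_{i_k}$ and applying Lemma \ref{product of paths} to each product path $w\cdot\alpha_s$ gives
\[
\int_{w\cdot\alpha_s}\omega_1\cdots\omega_n=\sum_{i=0}^{n}\int_{w}\omega_1\cdots\omega_i\int_{\alpha_s}\omega_{i+1}\cdots\omega_n ,
\]
where the $i=n$ term equals $\int_{w}\omega_1\cdots\omega_n$ because the empty iterated integral is $1$. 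Summing against the coefficients $c_w$, the $i=n$ contribution reproduces $\langle I, P\rangle$ and cancels, leaving
\[
\langle I, P(\alpha_s-1)\rangle=\sum_{i=0}^{n-1}\Bigl\langle \textstyle\int\omega_1\cdots\omega_i,\,P\Bigr\rangle\int_{\alpha_s}\omega_{i+1}\cdots\omega_n .
\]
Here $P$ is a product of $s-1$ loop factors and each inner integral $\int\omega_1\cdots\omega_i$ has length $i\leq n-1$. Since $n<s$ forces $i\leq n-1\leq s-2<s-1$, the inductive hypothesis applies to each pairing $\langle\int\omega_1\cdots\omega_i,\,P\rangle$ and makes it vanish, so the whole sum is $0$.

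The argument is largely bookkeeping, and I expect the only real care to be needed in two places: checking that peeling off $\alpha_s$ simultaneously drops the relevant length to $\leq n-1$ \emph{and} the number of factors to $s-1$ while preserving the strict inequality required to reinvoke the induction, and confirming that the length-$n$ block of the product formula is exactly what cancels $\langle I, P\rangle$. Conceptually this lemma is the path-level manifestation of the duality between the length filtration on iterated integrals and the augmentation filtration: $B_r(M)$ annihilates $J_x^{\,r+1}$, and $(\alpha_1-1)\cdots(\alpha_s-1)\in J_x^{\,s}\subseteq J_x^{\,r+1}$ whenever $s>r$.
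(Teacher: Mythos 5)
Your proof is correct. Note that the paper does not actually supply an argument here: it states the lemma and dismisses the proof as ``straightforward,'' with only the small examples preceding it and the identity (2.1) indicating that everything is meant to follow from Lemma \ref{product of paths}. Your write-up is a clean formalization of exactly that: the base case (a constant functional kills any element of the augmentation ideal) is right, and in the inductive step the length bookkeeping works out --- peeling off $\alpha_s$ via the product-of-paths formula cancels the top ($i=n$) term against $\langle I,P\rangle$ and leaves pairings of integrals of length $i\leq n-1\leq s-2$ against a product of $s-1$ factors, so the inductive hypothesis (which also covers the $i=0$ constant term, since $P$ lies in the augmentation ideal) applies. For comparison, the non-inductive route the paper seems to have in mind is to expand all $s$ factors at once: by Lemma \ref{product of paths}, $\int_{\alpha_{i_1}\cdots\alpha_{i_k}}\omega_1\cdots\omega_n$ is a sum over ways of splitting $\omega_1\cdots\omega_n$ into consecutive blocks assigned to the loops, and since $n<s$ every assignment to all $s$ loops leaves some loop with an empty block, so the signed (inclusion--exclusion) sum over subsets cancels term by term. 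That version is more symmetric and proves (2.1) in the same breath; yours trades the combinatorial cancellation for a shorter induction. Either is a legitimate completion of what the paper omits.
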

The proof of this lemma is straightforward.
Let $\omega_1,\omega_2, \ldots,\omega_s \in \Omega^1(M)$
and $(\alpha_1-1)(\alpha_2-1)\cdots(\alpha_s-1)$ as above.
Lemma \ref{product of paths} implies
\begin{equation}
\left\langle \int \omega_1\omega_2\cdots \omega_s,
(\alpha_1-1)(\alpha_2-1)\cdots(\alpha_s-1) \right\rangle =
\int_{\alpha_1}\omega_1\int_{\alpha_2}\omega_2\cdots
\int_{\alpha_s}\omega_s.
\end{equation}

The set of piecewise smooth loops in $M$
with base point $x\in M$ is denoted by $P_{x}M$.
Let $H^{0}(B_{s}(M), x)$ be the set of
homotopy functionals $P_{x}M\to \R$.
Integration induces a linear map
\[H^{0}(B_{s}(M), x)\ni I\mapsto \left(\gamma \mapsto
\langle I, \gamma \rangle\right)
\in\Hom_{\Z}(\Z \pi_1(X, x),\R).\]
Lemma \ref{values of augmentation ideals}
gives us $I|_{J_{x}^{s+1}}=0$ for $I\in B_{s}(M)$.
Chen proved the following $\pi_1$ de Rham theorem\index{$\pi_1$ de Rham theorem} \cite[Theorem 5.3]{0301.58006}.
\begin{thm}
The integration map induces an isomorphism
\[
H^0(B_s(M), x)\to \Hom_{\Z}(\Z \pi_1(X, x)/{J_{x}^{s+1}},\R).
\]
\end{thm}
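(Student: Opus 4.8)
The plan is to analyze the integration map through the augmentation ideal filtration $\Z\pi_1(X,x)\supset J_x\supset J_x^2\supset\cdots\supset J_x^{s+1}$, which is dual---via the length filtration on $B_s(M)$ together with Lemma~\ref{values of augmentation ideals}---to the filtration of iterated integrals by length. I would treat injectivity and surjectivity separately, and organize the argument as an induction on $s$, the case $s=0$ being the identification of the constants with $\Hom_\Z(\Z\pi_1(X,x)/J_x,\R)=\Hom_\Z(\Z,\R)=\R$.

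Injectivity is essentially formal. An element of $H^0(B_s(M),x)$ is a homotopy functional $P_xM\to\R$, hence by definition depends only on the homotopy class rel endpoints of a loop, that is, only on the corresponding element of $\pi_1(M,x)$. So the restriction to loops already determines the functional, and the integration map $I\mapsto(\gamma\mapsto\langle I,\gamma\rangle)$ is injective even before passing to the quotient; Lemma~\ref{values of augmentation ideals} then shows that its image annihilates $J_x^{s+1}$, so it genuinely lands in $\Hom_\Z(\Z\pi_1(X,x)/J_x^{s+1},\R)$.

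For surjectivity I would use the short exact sequence
\[
0\longrightarrow J_x^s/J_x^{s+1}\longrightarrow \Z\pi_1(X,x)/J_x^{s+1}\longrightarrow \Z\pi_1(X,x)/J_x^s\longrightarrow 0.
\]
Applying $\Hom_\Z(-,\R)$, which is exact because $\R$ is divisible and hence injective as a $\Z$-module, yields a short exact sequence of dual groups. By the inductive hypothesis the image of $B_{s-1}(M)$ surjects onto $\Hom_\Z(\Z\pi_1(X,x)/J_x^s,\R)$, so a diagram chase reduces everything to showing that the length-$s$ iterated integrals surject onto $\Hom_\Z(J_x^s/J_x^{s+1},\R)$.

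This last step is the crux. By the product formula $\langle\int\omega_1\cdots\omega_s,(\alpha_1-1)\cdots(\alpha_s-1)\rangle=\int_{\alpha_1}\omega_1\cdots\int_{\alpha_s}\omega_s$, a consequence of Lemma~\ref{product of paths}, the leading term of a length-$s$ integral of closed forms pairs with $(\alpha_1-1)\cdots(\alpha_s-1)$ through a quantity depending only on the homology classes $[\alpha_i]$; it therefore factors through $(J_x/J_x^2)^{\otimes s}\cong H^{\otimes s}$ and the natural surjection $H^{\otimes s}\twoheadrightarrow J_x^s/J_x^{s+1}$. Given $\phi\in\Hom_\Z(J_x^s/J_x^{s+1},\R)$, I would pull it back to a functional on $H^{\otimes s}$, express it via the perfect de Rham period pairing as $\sum a_J\int\omega_{j_1}\cdots\omega_{j_s}$ with the $\omega_j$ closed, and then correct this leading term by lower-length iterated integrals---adding forms $\eta$ as in Proposition~\ref{homotopy functionals}---to upgrade it to a genuine homotopy functional. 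The hard part will be showing that the obstruction to finding these correction terms vanishes exactly when $\phi$ descends from $H^{\otimes s}$ to the quotient $J_x^s/J_x^{s+1}$: one must match the integrability conditions $d\eta+\sum\omega_i\wedge\omega_j=0$ and the shuffle relations of Proposition~\ref{Shuffle relation} against the relations defining the graded augmentation filtration of $\pi_1(X,x)$. This compatibility between the structure dual to the shuffle product and the $I$-adic filtration of the group ring is what makes the graded map an isomorphism and closes the induction.
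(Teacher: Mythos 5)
The paper itself offers no proof of this statement: it is quoted as Chen's $\pi_1$ de Rham theorem with a citation to \cite[Theorem 5.3]{0301.58006}. So your proposal can only be measured against Chen's argument, whose formal skeleton you have reproduced correctly: injectivity is indeed immediate with the paper's definition of $H^0(B_s(M),x)$ as a set of functions on $P_xM$ (a homotopy functional vanishing on every class in $\pi_1(M,x)$ vanishes on every loop); $\Hom_{\Z}(-,\R)$ is exact because $\R$ is divisible; and the diagram chase correctly reduces surjectivity to showing that length-$s$ integrals hit all of $\Hom_{\Z}(J_x^s/J_x^{s+1},\R)$.

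The genuine gap is exactly where you wrote ``the hard part will be'': the existence of the lower-length correction terms is not a technical detail to be matched against shuffle relations, it \emph{is} the theorem, and your proposal ends by asserting, not proving, that the obstruction vanishes precisely when the functional descends to $J_x^s/J_x^{s+1}$. For general $s$ this requires building the correction terms length by length, with a new obstruction at each stage (Chen's method of formal power series connections, equivalently the bar construction), and it also requires knowing that the pulled-back functional on $H^{\otimes s}$ is a finite sum of products of periods of closed forms, a point that needs care when $M$ is an arbitrary smooth manifold. What your outline can actually deliver with the tools in this paper is the case $s=2$ for a compact Riemann surface $X$, the only case used here: the kernel of $H^{\otimes 2}\to J_x^2/J_x^3$ is generated by the class of the surface relator, i.e.\ the symplectic element, so a leading term $\sum c_{ij}\,\omega_i\otimes\omega_j$ descends if and only if $\sum c_{ij}\int_X\omega_i\wedge\omega_j=0$ (this is the paper's identification $(J_x^2/J_x^3)^{\ast}\cong K$); since $\int_X\colon H^2(X;\R)\to\R$ is an isomorphism, the $2$-form $\sum c_{ij}\,\omega_i\wedge\omega_j$ is then exact, the lemma in Section \ref{Harmonic volume} produces $\eta$ with $d\eta+\sum c_{ij}\,\omega_i\wedge\omega_j=0$, Proposition \ref{homotopy functionals} makes $\sum c_{ij}\int\omega_i\omega_j+\int\eta$ a homotopy functional, and Lemma \ref{values of augmentation ideals} shows the added term $\int\eta$ does not disturb the pairing with $J_x^2/J_x^3$. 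For general $s$ and general $M$, the inductive obstruction-vanishing argument is precisely what Chen supplies and what your proposal still owes.
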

Let $\overline{B}_{s}(M)$ denote the subset of elements of $B_{s}(M)$
whose constant term vanishes.
A similar result follows.
\begin{cor}
\label{de Rham theorem}
We have an isomorphism
\[
H^0(\overline{B}_{s}(M), x)\to \Hom_{\Z}(J_{x}/{J_{x}^{s+1}},\R).
\]
\end{cor}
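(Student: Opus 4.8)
The plan is to deduce this from Chen's $\pi_1$ de Rham theorem stated just above by splitting off the constant term. Write $\Theta\colon H^0(B_s(M),x)\xrightarrow{\cong}\Hom_\Z(\Z\pi_1(X,x)/J_x^{s+1},\R)$ for the integration isomorphism of that theorem. First I would record the elementary splitting on the de Rham side: assigning to $I\in B_s(M)$ its constant term $\lambda$ gives a direct sum decomposition $B_s(M)=\R\cdot 1\oplus \overline{B}_s(M)$, where $1$ denotes the length-$0$ (constant) iterated integral. Since constants are homotopy functionals and the difference of two homotopy functionals is again one, subtracting the constant term preserves $H^0$, so this restricts to $H^0(B_s(M),x)=\R\cdot 1\oplus H^0(\overline{B}_s(M),x)$.

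On the algebraic side, the augmentation $\Z\pi_1(X,x)\to\Z$ descends to a short exact sequence of $\Z$-modules
\[
0\to J_x/J_x^{s+1}\to \Z\pi_1(X,x)/J_x^{s+1}\to \Z\to 0,
\]
which splits because $\Z$ is free; a section is given by $1\mapsto[1]$, the class of the constant loop. Applying $\Hom_\Z(-,\R)$ yields a split short exact sequence
\[
0\to \Hom_\Z(\Z,\R)\to \Hom_\Z(\Z\pi_1(X,x)/J_x^{s+1},\R)\xrightarrow{\rho}\Hom_\Z(J_x/J_x^{s+1},\R)\to 0,
\]
where $\rho$ is restriction to the submodule $J_x/J_x^{s+1}$, and the kernel $\Hom_\Z(\Z,\R)\cong\R$ consists exactly of the functionals factoring through the augmentation.

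The key step is to match the two splittings under $\Theta$. For the constant integral $1$ one has $\langle 1,\gamma\rangle=1$ for every loop $\gamma$, so $\Theta(1)$ is the augmentation functional $\varepsilon\colon\gamma\mapsto 1$, which spans $\ker\rho$. As $\Theta$ is an isomorphism, $\Theta(\R\cdot 1)=\R\cdot\varepsilon=\ker\rho$, and therefore $\Theta$ carries $H^0(\overline{B}_s(M),x)$ isomorphically onto a complement of $\ker\rho$. Since $\rho$ is surjective with kernel $\ker\rho$, it maps any such complement isomorphically onto $\Hom_\Z(J_x/J_x^{s+1},\R)$. Hence the composite $\rho\circ\Theta$ restricts to the asserted isomorphism
\[
H^0(\overline{B}_s(M),x)\to \Hom_\Z(J_x/J_x^{s+1},\R),\qquad I\mapsto\bigl(\gamma\mapsto\langle I,\gamma\rangle\bigr).
\]

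This composite is exactly the integration map of the corollary once one recalls that $I|_{J_x^{s+1}}=0$ for $I\in B_s(M)$ (noted just before the theorem), so that $\langle I,-\rangle$ is well defined on $J_x/J_x^{s+1}$. The only genuinely substantive input is Chen's theorem itself; the remaining obstacle, and the point deserving care, is verifying that the constant-term splitting on the de Rham side is precisely the one dual to the augmentation, that is, the identification $\Theta(\R\cdot 1)=\ker\rho$.
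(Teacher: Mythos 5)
Your proof is correct and follows exactly the route the paper intends: the paper derives this corollary from Chen's $\pi_1$ de Rham theorem with only the remark ``a similar result follows,'' and your argument---splitting off the constant term on the de Rham side, splitting the augmentation sequence on the algebraic side, and checking that integration matches $\R\cdot 1$ with the functionals factoring through the augmentation---is precisely the omitted verification. No gaps; the only implicit point worth noting is that the constant term of an element of $B_s(M)$ is well defined (it is the value on the constant path), which justifies the direct sum decomposition you use.
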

If we replace $\R$ with $\C$, similar results are obtained.
The pointed harmonic volume $I_{(X,x)}$
for a pointed compact Riemann surface $(X,x)$
can be interpreted as an element of
$H^0(\overline{B}_{2}(X), x)$.

\subsection{Harmonic volume}
\label{Harmonic volume}
We review the definition of the harmonic volume for a
compact Riemann surface and its properties.
It is a complex analytic invariant defined by Chen's iterated integrals of length $2$.
It also gives information about how the compact Riemann surface is embedded in its Jacobian variety.
First, we define the pointed harmonic volume for a pointed compact Riemann surface.

Let $X$ be a compact Riemann surface or smooth projective curve over $\C$ of genus $g\geq 2$.
(See Farkas and Kra \cite{0764.30001} for an introduction to Riemann surfaces.)
The surface $X$ is homeomorphic to an oriented closed surface $\Sigma_g$ of genus $g$.
Its mapping class group\index{mapping class group}, denoted by $\Gamma_g$, is the group of isotopy classes of orientation-preserving diffeomorphisms of $\Sigma_g$.
The group $\Gamma_g$ acts naturally on the first integral homology group
$H_1(X; \Z)=H_1(\Sigma_g; \Z)$.
Let $H$ denote the first integral cohomology group $H^1(X; \Z)$.
By Poincar\'e duality, $H$ is isomorphic to $H_1(X; \Z)$ as
$\Gamma_g$-modules.
The Hodge star operator $\ast$ is locally given by
$\ast (f_1(z)dz + f_2(z)d\bar{z})=-\sqrt{-1}\,f_1(z)dz + \sqrt{-1}\,f_2(z)d\bar{z}$ in a local coordinate $z$.
It depends only on the complex structure and not on the choice of a Hermitian metric.
The real Hodge star operator $\ast\colon \Omega^1(X)\to \Omega^1(X)$
is given by restriction.
Using the Hodge theorem, we identify $H$ with the space of real harmonic 1-forms on $X$ with $\Z$-periods, {\it i.e.},
$H=\{\omega\in \Omega^1(X);\, d\omega=d\ast \omega=0,
\ \int_{\gamma}\omega\in \Z \text{ for any loop }\gamma\}.$
We introduce the following lemma for homotopy functionals.
See Proposition \ref{homotopy functionals}.
\begin{lem}
Let $\omega_i\in \Omega^1(X)$ and $c_{ij}\in \R$.
If $\sum \int_{X}c_{ij}\omega_i\wedge \omega_j =0$,
then there exists a 1-form $\eta\in \Omega^1(X)$
such that $d\eta +\sum c_{ij}\omega_i\wedge \omega_j =0$.
\end{lem}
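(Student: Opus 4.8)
The plan is to recognize this as a statement purely about the top-degree de Rham cohomology of the compact oriented surface $X$, with the wedge products contributing nothing beyond producing a single $2$-form. First I would write $\beta=\sum c_{ij}\,\omega_i\wedge\omega_j\in\Omega^2(X)$. Since $X$ is $2$-dimensional, $\beta$ is a top-degree form and hence automatically closed, $d\beta=0$, so it represents a class $[\beta]\in H^2_{dR}(X;\R)$. The hypothesis $\int_X\beta=0$ says precisely that this class is annihilated by integration over the fundamental cycle. Thus the entire content of the lemma is the claim that a closed $2$-form on a compact connected oriented surface with vanishing integral is exact; the desired correction term is then obtained by setting $\eta=-\alpha$, where $\beta=d\alpha$, so that $d\eta+\sum c_{ij}\,\omega_i\wedge\omega_j=-d\alpha+\beta=0$.

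To justify that $\beta$ is exact, I would invoke de Rham's theorem: integration over $X$ induces an isomorphism $H^2_{dR}(X;\R)\xrightarrow{\sim}\R$ whose kernel consists exactly of the exact forms, so $\int_X\beta=0$ forces $[\beta]=0$, i.e. $\beta=d\alpha$ for some $\alpha\in\Omega^1(X)$. Alternatively, and more in keeping with the Hodge-theoretic setup just introduced, I would use the Hodge decomposition in top degree. On a compact oriented surface there are no $3$-forms, so $d^\ast$ vanishes on $\Omega^3(X)=0$ and a $2$-form has no co-exact component; the decomposition reads $\Omega^2(X)=\mathcal H^2(X)\oplus d\,\Omega^1(X)$, with $\mathcal H^2(X)$ the one-dimensional space of harmonic $2$-forms spanned by the volume form $\ast 1$. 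Writing $\beta=\beta_{\mathrm{harm}}+d\alpha$ and integrating, Stokes' theorem kills the exact part, so $\int_X\beta=\int_X\beta_{\mathrm{harm}}$; since $\beta_{\mathrm{harm}}=c\cdot\ast 1$ for a constant $c$ and $\int_X\ast 1=\mathrm{Vol}(X)\neq 0$, the hypothesis gives $c=0$, hence $\beta_{\mathrm{harm}}=0$ and $\beta=d\alpha$. (If the $\omega_i$ are taken complex-valued the same argument applies verbatim via $H^2_{dR}(X;\C)\cong\C$.)

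There is essentially no serious obstacle here: the result is the standard fact $H^2_{dR}(X;\R)\cong\R$ for a compact connected oriented surface, and the expression $\sum c_{ij}\,\omega_i\wedge\omega_j$ matters only as an arbitrary smooth $2$-form with prescribed integral. The one point worth stating with care is why integration detects the cohomology class completely, namely the absence of a co-exact part in top degree (equivalently, the Poincar\'e duality pairing of $H^2$ with $H^0=\R$); once that is in hand the conclusion $\eta=-\alpha$ is immediate. I would also remark that this lemma supplies only the \emph{existence} of $\eta$; the uniqueness and the normalization $\int_X\eta\wedge\ast\alpha=0$ used in the definition of the pointed harmonic volume are recovered afterwards by adjusting $\alpha$ by a closed $1$-form, i.e. by projecting onto the orthogonal complement of the closed forms.
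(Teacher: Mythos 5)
Your proof is correct, but it takes a different route from the paper's. The paper does not pass through the de Rham class of $\beta=\sum c_{ij}\,\omega_i\wedge\omega_j$ at all: it invokes the exact sequence
\[
0\longrightarrow \C \longrightarrow \Omega^0(X) \xrightarrow{\ d\ast d\ } \Omega^2(X) \xrightarrow{\ \int_X\ } \C \longrightarrow 0,
\]
i.e.\ the solvability of the Poisson-type equation $d\ast d\,h=\beta$ whenever $\int_X\beta=0$, and then sets $\eta=-\ast dh$. The payoff of that choice is that $\eta=-\ast dh$ is automatically orthogonal to every closed $1$-form (for closed $\alpha$, $\int_X \eta\wedge\ast\alpha=\int_X \alpha\wedge dh=0$ by Stokes), so the paper's $\eta$ already satisfies the normalization $\int_X\eta\wedge\ast\alpha=0$ that is imposed immediately after the lemma to make $\eta$ unique in the definition of the pointed harmonic volume. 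Your argument — $\beta$ is a closed top-form with vanishing integral, hence exact since $H^2_{dR}(X;\R)\cong\R$, take $\eta=-\alpha$ for any primitive $\alpha$ — is a perfectly valid and more elementary way to get bare existence, and you correctly flag that the normalization must then be restored by projecting off the closed part of $\eta$. But note that this projection is itself Hodge theory (the orthogonal complement of the closed forms in $\Omega^1(X)$ is exactly $\ast d\,\Omega^0(X)$), so carrying it out reconstructs precisely the paper's $-\ast dh$; likewise your second variant, the decomposition $\Omega^2(X)=\mathcal H^2(X)\oplus d\,\Omega^1(X)$, is equivalent to the paper's exact sequence because $d\,\Omega^1(X)=d\ast d\,\Omega^0(X)$. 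In short: your route is cleaner for the existence statement alone, while the paper's route costs the same analytic input but delivers the distinguished, normalized $\eta$ in one step, which is what the subsequent construction actually uses.
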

Indeed, an exact sequence
\[
\SelectTips{cm}{}\xymatrix{
0\ar[r] & \C \ar[r] & \Omega^0(X) \ar[r]^{d\ast d}
& \Omega^2(X) \ar[r]^{\int_{X}} & \C \ar[r] & 0
}
\]
gives a function $h\in \Omega^0(X)$ such that
$d\ast d\, h=\sum c_{ij} \omega_i\wedge \omega_j$.
Here the left vector space $\C$ means the constant functions.
Put $\eta=-\ast d h$.

Let $x\in X$ be a point.
We define the pointed harmonic volume for $(X,x)$ in the following way.
Let $K$ be the kernel of $(\ , \ ) :H\otimes H \to \Z$ induced by the intersection pairing.
For a given $\sum_{i=1}^{n}a_i\otimes b_i\in K$,
there exists an $\eta\in \Omega^1(X)$ satisfying the two conditions
\[d\eta +\sum_{i=1}^{n} a_i\wedge b_i =0\]
and
\[
\int_{X}\eta\wedge \ast\alpha =0
\]
for any closed 1-form $\alpha \in \Omega^1(X)$.
The second condition determines $\eta$ uniquely.
We can choose $\eta=-\ast d h$ with
$d\ast d\, h=\sum_{i=1}^{n}a_i\wedge b_i$
as in the above lemma.
This $\eta$ readily satisfies the above two conditions.
For any pointed compact Riemann surface $(X,x)$, the homotopy functional
\[
\varphi\colon P_{x}X\ni \gamma \mapsto
\sum_{i=1}^{n}\int_{\gamma}a_i b_i +\int_{\gamma}\eta \in\R
\]
induces a map
\begin{equation}
\label{homotopy functional}
\overline{\varphi}\colon \Z \pi_{1}(X, x)\to \R.
\end{equation}
We remark that this $\overline{\varphi}$ is an element of
$H^0(\overline{B}_{2}(X), x)\cong \Hom_{\Z}(J_{x}/{J_{x}^{3}},\R)$.
Lemma \ref{product of paths} yields
\[
\varphi(\alpha\beta)
=\varphi(\alpha)
+\varphi(\beta)
+\sum_{i=1}^{n}\int_{\alpha}a_i \int_{\beta}b_i
\]
for loops $\alpha, \beta\in P_{x}X$.
From the assumption, $\int_{\alpha}a_i, \int_{\beta}b_i\in \Z$ for each $i$.
Using a natural projection $\R\to \R/{\Z}$, the map $\overline{\varphi}\colon \Z \pi_{1}(X, x)\to \R/{\Z}$ is a homomorphism.
Furthermore, this gives the homomorphism
\[
\overline{\varphi}\colon H_{1}(X;\Z)\to \R/{\Z}.
\]
We define the notion of pointed harmonic volume \cite{0678.14005}.
\begin{defn}
For $\sum_{i=1}^{n}a_i\otimes b_i\in K$ and $c\in H$,
the pointed harmonic volume\index{pointed harmonic volume}\index{harmonic volume!pointed} is a homomorphism $K\otimes H\to \R/{\Z}$
\[I_{(X,x)}{\Biggl(}{\biggl(}\sum_{i=1}^{n}a_{i}\otimes b_{i}{\biggr)}\otimes c{\Biggr)}=\overline{\varphi}(c)
\quad \mathrm{mod} \ \mathbb{Z}.\]
Here
$\overline{\varphi}$ is defined in the way stated above
and the homology class $c$ is considered as
a loop in $X$ with base point $x$.
\end{defn}
\begin{rem}
\label{transposition}
From Proposition \ref{Shuffle relation}, we have
\[
I_{(X,x)}{\Biggl(}{\biggl(}\sum_{i=1}^{n}a_{i}\otimes b_{i}{\biggr)}\otimes c{\Biggr)}
=-I_{(X,x)}{\Biggl(}{\biggl(}\sum_{i=1}^{n}b_{i}\otimes a_{i}{\biggr)}\otimes c{\Biggr)}
\quad \mathrm{mod} \ \mathbb{Z}.
\]
\end{rem}
Harris \cite{0527.30032} gave the same definition of $I_{(X,x)}$, and called the pointed harmonic volume by Pulte.
The pointed harmonic volume $I_{(X,x)}$ is naturally regarded
as an element of $\Hom_{\Z}(\wedge^3 H, \R/{\Z})$
and a section of the local system $\mathcal{L}_1$ on ${\mathcal C}_g$ defined in
Section \ref{Introduction}.

The harmonic volume is a restriction of the pointed harmonic volume $I_{(X,x)}$.
We denote by $(H^{\otimes 3})^{\prime}$ the kernel of
the natural homomorphism
$p\colon H^{\otimes 3} \to H^{\oplus 3}$
defined by
$p(a\otimes b\otimes c)=((a, b)c, (b, c)a, (c, a)b)$.
The group $K\otimes H$ is a subgroup of $(H^{\otimes 3})^{\prime}$.
We have a natural short exact sequence
\[
\SelectTips{cm}{}\xymatrix{
0\ar[r] &(H^{\otimes 3})^{\prime} \ar[r]&
H^{\otimes 3} \ar[r]^{p}& H^{\oplus 3} \ar[r] & 0.
}
\]
The rank of the free $\Z$-module $(H^{\otimes 3})^{\prime}$
is $(2g)^3-6g$.
\begin{defn}[\cite{0527.30032}]
The {\it harmonic volume}\index{harmonic volume} $I_X$ for $X$
is a linear form on $(H^{\otimes 3})^{\prime}$
with values in $\R/{\Z}$ defined by the restriction of
$I_{(X,x)}$ to $(H^{\otimes 3})^{\prime}$, {\it i.e.},
\[I_{X}=I_{(X,x)}|_{(H^{\otimes 3})^{\prime}}\colon
(H^{\otimes 3})^{\prime}\to \R/{\Z}.\]
\end{defn}
From Lemma \ref{product of paths},
the harmonic volume $I_X$ is independent of the choice of base point $x$.
Let $S_3$ be the third symmetric group.
We explain a cyclic invariance of $I_X$ by the natural action of $S_3$ on $(H^{\otimes 3})^{\prime}$.
Combining Stokes' theorem and Remark \ref{transposition}, we have
\begin{equation}
\label{invariant by the action of permutation}
I_X\left(\sum_i \omega_{\sigma(1), i}\otimes \omega_{\sigma(2), i}\otimes \omega_{\sigma(3), i}\right)
=\mathrm{sgn}(\sigma)I_X\left(\sum_i \omega_{1, i}\otimes \omega_{2, i}\otimes \omega_{3, i}
\right)\
\mathrm{mod}\ \Z,
\end{equation}
where $\sum_i \omega_{1, i}\otimes \omega_{2, i}\otimes \omega_{3, i}
\in (H^{\otimes 3})^{\prime}$ and $\sigma$ is an element of $S_3$.

We present examples of calculation of the harmonic volume for hyperelliptic curves.
The hyperelliptic curve $C$ is the compactification of the plane curve in the $(z,w)$ plane
$\C^2$
\[w^2=\prod_{i=0}^{2g+1} (z-p_i),\]
where $p_0, p_1,\ldots, p_{2g+1}$ are some distinct points on $\C$.
It admits the hyperelliptic involution given by $(z,w)\mapsto (z,-w)$.
Let $\pi$ be the $2$-sheeted covering
$C \to \C P^1, (z,w) \mapsto z$, branched over $2g+2$
branch points $\{p_i\}_{i=0,1,\cdots ,2g+1}$
and $P_i \in C$ a ramification point such that $\pi(P_i)=p_i$.
It is known that $\{P_i\}_{i=0,1,\ldots,2g+1}$ is just the set of
all the Weierstrass points on any hyperelliptic curve $C$.
We outline the computation of the harmonic volumes for hyperelliptic curves.
See \cite{1090.14007,1133.14030} for details.
For any hyperelliptic curve $C$,
one has $I_{C}=0$ or $1/{2}\ \textnormal{mod}\ \Z$ by the existence of the hyperelliptic involution.
The computation was performed using a suitable choice of symplectic basis $\{x_i,y_i\}_{i=1,\ldots,g}$ of $H$ (\cite[p.800]{1090.14007}).
For example,
\begin{center}
$I_C((x_i\otimes y_i-x_{k+1}\otimes y_{k+1})\otimes y_k)=
\left\{
 \begin{array}{cc}
  \dfrac{1}{2} & (i<k, 2\leq k\leq g-1),\\
  0 & \text{otherwise}.
 \end{array}
\right.$
\end{center}
We have two ways to compute the harmonic volumes for all the hyperelliptic curves.
First the computation can be reduced to that of a single hyperelliptic curve.
Second we use basic results from the cohomology group of
the hyperelliptic mapping class group
which is composed of the centralizer of
the hyperelliptic involution in $\Gamma_g$.
Similarly the author \cite{1133.14030} obtained the pointed harmonic volume
$I_{(C,P_j)}$ for the Weierstrass pointed hyperelliptic curves.
However, the pointed harmonic volume for other pointed hyperelliptic curves
has yet to be determined.

We may consider the harmonic volume as an element of
$\Hom_{\Z}((\wedge^3 H)^{\prime},\R/{\Z})$.
Let $j_2\colon H^{\otimes 3}\to \wedge^3 H$ be
a natural homomorphism
\[j_2(a\otimes b\otimes c)= a\wedge b\wedge c,\]
where $\wedge^3 H$ denotes the third exterior product of $H$.
We have a homomorphism of short exact sequences
\[
\SelectTips{cm}{}\xymatrix{
0\ar[r] &(H^{\otimes 3})^{\prime} \ar[r] \ar[d]_{j_1}&
H^{\otimes 3} \ar[d]_{j_2}\ar[r]^{p}& H^{\oplus 3} \ar[d]_{j_3}\ar[r] & 0 \\
0\ar[r] &(\wedge^3 H)^{\prime} \ar[r] & \wedge^3 H \ar[r]^{\bar{p}}&
H \ar[r] & 0 ,
}
\]
where $j_3(a,b,c)=a+b+c$, $\bar{p}(a\wedge b\wedge c)=(a, b)c+(b, c)a+(c, a)b$
and $j_1$ is the restriction homomorphism of $j_2$ to $(H^{\otimes 3})^{\prime}$.
The rank of the free $\Z$-module $(\wedge^3 H)^{\prime}$ is ${2g \choose 3}-2g$.
Using Formula (\ref{invariant by the action of permutation}),
it is easy to show the following:
\begin{prop}
We can take a homomorphism $\nu_{X}\in \Hom_{\Z}((\wedge^3 H)^{\prime},\R/{\Z})$
satisfying the commutative diagram
\[
\SelectTips{cm}{}\xymatrix{
(H^{\otimes 3})^{\prime}\ar[r]^{2I_X} \ar[d]_{j_1}
& \R/{\Z}\\
(\wedge^3 H)^{\prime} \ar[ur]_{\nu_X}.&
}
\]
\end{prop}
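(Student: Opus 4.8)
The plan is to show that $2I_X$ descends along $j_1$, that is, that it annihilates $\ker j_1=\ker j_2\cap(H^{\otimes 3})'$, and then to produce $\nu_X$ by an extension argument. Granting $2I_X|_{\ker j_1}=0$, the map $2I_X$ factors through a homomorphism on $j_1\bigl((H^{\otimes 3})'\bigr)\subseteq(\wedge^3 H)'$; since $\R/{\Z}$ is divisible, hence an injective $\Z$-module, this homomorphism extends to the desired $\nu_X\in\Hom_{\Z}((\wedge^3 H)',\R/{\Z})$. (Applying the snake lemma to the displayed morphism of short exact sequences, using that $j_2$ is onto and that $p$ carries the degenerate tensors onto $\ker j_3=\{(a,b,c):a+b+c=0\}$, one sees $j_1$ is surjective, so in fact $\nu_X$ is unique.) Thus everything reduces to the single identity $2I_X|_{\ker j_1}=0$.

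The engine is Formula~(\ref{invariant by the action of permutation}), $I_X(\sigma v)=\mathrm{sgn}(\sigma)I_X(v)$, which makes sense because $p$ intertwines the $S_3$-action on $H^{\otimes 3}$ with a signed permutation action on $H^{\oplus 3}$, so that $(H^{\otimes 3})'=\ker p$ is $S_3$-stable. I read off two facts. For a transposition $\tau$ one gets $I_X(v+\tau v)=I_X(v)+\mathrm{sgn}(\tau)I_X(v)=0$, so already $I_X$ kills the subgroup $T'\subseteq(H^{\otimes 3})'$ generated by all $v+\tau v$. Taking instead a degenerate tensor such as $a\otimes a\otimes b$ that happens to lie in $(H^{\otimes 3})'$ (equivalently $(a,b)=0$), invariance under the transposition $\tau_{12}$ fixing it forces $I_X(a\otimes a\otimes b)=-I_X(a\otimes a\otimes b)$, whence $2I_X(a\otimes a\otimes b)=0$. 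This is precisely where the factor $2$ enters and cannot be dropped.

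These two facts match the two flavours of relation defining the exterior power. Indeed $\ker j_2$ is generated by the degenerate tensors $a\otimes a\otimes b$, $a\otimes b\otimes a$, $a\otimes b\otimes b$, and modulo $T'$ each such generator is $2$-torsion, since $2(a\otimes a\otimes b)=(a\otimes a\otimes b)+\tau_{12}(a\otimes a\otimes b)\in T'$. The target identity would then follow formally: if $2v\in T'$ for every $v\in\ker j_1$, then $2I_X(v)=I_X(2v)\in I_X(T')=0$. In this scheme the antisymmetry (permutation) relations are absorbed by the sign rule and are harmless even for $I_X$ itself, while the repeated-entry relations are exactly what obstruct $I_X$ and are cleared only after multiplication by $2$.

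The main obstacle is the containment $2\,\ker j_1\subseteq T'$ carried out \emph{intrinsically} inside $(H^{\otimes 3})'$. The clean relation $2(a\otimes a\otimes b)=(a\otimes a\otimes b)+\tau_{12}(a\otimes a\otimes b)$ uses a single degenerate tensor that must itself be isotropic, $(a,b)=0$, whereas a general element of $\ker j_1=\ker j_2\cap(H^{\otimes 3})'$ is only a $\Z$-combination of degenerate tensors that need not individually lie in $(H^{\otimes 3})'$. I would close this gap in one of two ways: either pin down $\ker j_1$ through the snake lemma above, identifying it as the kernel of the surjection $j_1$ and transporting the $2$-divisibility from $\ker j_2/T$ into the primed setting; or fix a symplectic basis $\{x_i,y_i\}$ of $H$ and check directly, on the resulting finite list of generators of $\ker j_1$, that each is, modulo $T'$, twice a combination of isotropic degenerate tensors. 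Either route is routine once set up; the entire content of the proposition is the appearance of the factor $2$ forced by the fixed-point relation of the previous paragraph.
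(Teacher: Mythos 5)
Your overall framework is sound and is presumably what the paper intends (its entire ``proof'' is the sentence asserting that the statement follows from Formula~(\ref{invariant by the action of permutation})): you correctly observe that $j_1$ is surjective, correctly isolate the two usable consequences of the sign formula (vanishing of $I_X$ on $v+\tau v$, and the fixed-point argument producing the factor $2$), and correctly reduce everything to the containment $2\ker j_1\subseteq T'$. That containment is in fact true, but you do not prove it, and it is not ``routine'': it is the entire content of the proposition. Concretely, the two facts you list amount to working in the quotient $\ker j_1/T'$, and the best purely formal consequence available there is the following: the antisymmetrizer $A=\sum_{\sigma\in S_3}\mathrm{sgn}(\sigma)\sigma$ annihilates $\ker j_2$ (check it on degenerate tensors), so for $v\in\ker j_1$ one gets $6v=Av+\sum_{\sigma}\bigl(v-\mathrm{sgn}(\sigma)\sigma v\bigr)\in T'$, each summand being of your two types because $v-\tau\tau'v=(v+\tau'v)-\bigl(\tau'v+\tau(\tau'v)\bigr)$ for transpositions $\tau,\tau'$. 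Thus your toolkit yields $6I_X|_{\ker j_1}=0$ for free, and the actual issue is to exclude $3$-torsion in $\ker j_1/T'$; nothing in your write-up addresses the prime $3$. Moreover, neither proposed route closes this as described. Route 1 is circular: the $2$-divisibility of $\ker j_2$ modulo $T$ is witnessed by elements $w+\tau w$ whose $w$ lie outside $(H^{\otimes 3})'$, and ``transporting'' those witnesses into the primed setting is precisely the unproven claim. Route 2 underestimates $\ker j_1$: it is \emph{not} spanned by elements homogeneous with respect to a symplectic basis. For instance $G=x_1\otimes y_1\otimes x_2+x_2\otimes y_1\otimes x_1-x_2\otimes y_2\otimes x_2$ lies in $\ker j_1$, while its two homogeneous pieces do not lie in $(H^{\otimes 3})'$ (and $x_2\otimes y_2\otimes x_2$ is degenerate but not isotropic), so the blockwise ``finite list of generators'' misses such elements, and I see no way to treat $G$ by relations internal to $(H^{\otimes 3})'$.

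For comparison, here are two ways the gap can actually be closed, both requiring more than your two facts. (i) Lattice-theoretic: $\ker p/T'=\ker p\otimes_{\Z[S_3]}\Z_{\mathrm{sgn}}$, where $\ker p=(H^{\otimes 3})'$; tensoring $0\to (H^{\otimes 3})'\to H^{\otimes 3}\to H^{\oplus 3}\to 0$ with the sign character over $\Z[S_3]$ shows that the kernel of $\ker p/T'\to H^{\otimes 3}/N$ (with $N$ the sign-relation subgroup of the full tensor cube) is a quotient of $\mathrm{Tor}_1^{\Z[S_3]}(H^{\oplus 3},\Z_{\mathrm{sgn}})\cong H\otimes\Z/2$; since $\ker j_2/N$ is $2$-torsion (your observation), this gives $4v\in T'$, and combined with $6v\in T'$ one concludes $2v=6v-4v\in T'$. (ii) Using the paper's pointed volume: Remark~\ref{transposition} holds on all of $K\otimes H$, so $2I_{(X,x)}(a\otimes a\otimes b)=0$ for \emph{every} $a,b$, not only isotropic ones; applying a cyclic rotation $\rho$ (which preserves $I_X$ on $(H^{\otimes 3})'$) to the element $G$ above yields $\rho G=(x_1+x_2)\otimes(x_1+x_2)\otimes y_1-x_1\otimes x_1\otimes y_1-x_2\otimes x_2\otimes y_1-x_2\otimes x_2\otimes y_2$, a sum of such tensors, whence $2I_X(G)=0$. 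Either argument makes essential use of structure (a Tor computation, or relations living in $K\otimes H$ outside $(H^{\otimes 3})'$) that your proposal neither invokes nor can reach; as written, it establishes only that $6I_X$, not $2I_X$, descends to $(\wedge^3 H)'$.
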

The homomorphism $\nu_X$ is also called the harmonic volume.
Another description of $\nu_X$ is as follows.
For simplicity, we consider
$\omega_1\wedge \omega_2 \wedge \omega_3\in (\wedge^3 H)^{\prime}$,
and fix a base point $x\in X$.
The map $A_1\colon X\to \R^3/{\Z^3}$, a kind of Abel-Jacobi map,
is defined by:
\[
A_1(y)
=\left(\int_{x}^{y}\omega_{1},\int_{x}^{y}\omega_{2},
\int_{x}^{y}\omega_{3}\right)\ \mathrm{mod}\ \Z^3.
\]
There exists a 3-chain $c_3$ in $\R^3/{\Z^3}$
such that the image $A_1(X)=\partial c_3$ modulo integral 2-chains.
For each $k=1,2,3$, we can take the coordinates $x_{k}$ on $\R^3/{\Z^3}$
such that $A_1^{\ast}(dx_{k})=\omega_{k}$.
The volume of $c_3$ modulo $\Z$ is independent of the choice of $c_3$
and base point $x$.
The following proposition is proved by a straightforward computation.
\begin{prop}
\label{another definition of harmonic volume}
We have
\[
\nu_{X}\left(\omega_{1}\wedge \omega_{2}\wedge \omega_{3}
\right)
=2\int_{c_3}dx_{1}\wedge dx_{2} \wedge dx_{3}
\quad \mathrm{mod}\ \Z.
\]
\end{prop}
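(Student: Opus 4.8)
The plan is to reduce both sides to the length-two iterated integral appearing in $I_X$ and then to match them by slicing $c_3$ along the level sets of the third coordinate. Throughout I work in the situation actually presupposed by the statement, namely that $c_3$ exists, i.e.\ $A_1(X)$ is null-homologous in $\R^3/{\Z^3}$. Since the class of $A_1(X)$ in $H_2(\R^3/{\Z^3};\Z)$ is detected by the numbers $\int_X\omega_i\wedge\omega_j=(\omega_i,\omega_j)$, this forces $(\omega_1,\omega_2)=(\omega_2,\omega_3)=(\omega_3,\omega_1)=0$. In particular $\omega_1\otimes\omega_2\otimes\omega_3$ lies in $(H^{\otimes 3})^{\prime}$ with $j_1(\omega_1\otimes\omega_2\otimes\omega_3)=\omega_1\wedge\omega_2\wedge\omega_3$, so the commutative diagram defining $\nu_X$ gives
\[
\nu_X(\omega_1\wedge\omega_2\wedge\omega_3)=2\,I_X(\omega_1\otimes\omega_2\otimes\omega_3)=2\Bigl(\int_{\gamma}\omega_1\omega_2+\int_{\gamma}\eta\Bigr)\ \mathrm{mod}\ \Z,
\]
where $\gamma$ is a loop Poincar\'e dual to $\omega_3$ and $\eta$ is the unique $1$-form with $d\eta+\omega_1\wedge\omega_2=0$ that is orthogonal to the closed forms. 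Writing $\varphi(\delta)=\int_{\delta}\omega_1\omega_2+\int_{\delta}\eta$ for the associated real-valued homotopy functional on loops $\delta$, it therefore suffices to prove $\int_{c_3}dx_1\wedge dx_2\wedge dx_3=\varphi(\gamma)$ modulo $\Z$.

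Second, I would build $c_3$ explicitly by slicing. Let $h\colon X\to\R/{\Z}$ be the harmonic map $y\mapsto\int_x^y\omega_3$, so that $dh=\omega_3$ and $x_3\circ A_1=h$; hence $A_1$ carries the level set $\gamma_c=h^{-1}(c)$ into the slice $\{x_3=c\}\cong\R^2/{\Z^2}$ with coordinates $x_1,x_2$. Because $\int_{\gamma_c}\omega_1=(\omega_1,\omega_3)=0$ and $\int_{\gamma_c}\omega_2=(\omega_2,\omega_3)=0$, the loop $A_1(\gamma_c)$ is null-homologous in this $2$-torus and bounds a $2$-chain $b_c$ in the slice. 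Sweeping $b_c$ over $c\in\R/{\Z}$ produces a $3$-chain whose boundary is $A_1(X)$ modulo integral $2$-chains, which I take as $c_3$; Fubini in the $x_3=c$ direction then yields $\int_{c_3}dx_1\wedge dx_2\wedge dx_3=\int_0^1\bigl(\int_{b_c}dx_1\wedge dx_2\bigr)\,dc$. Applying Stokes' theorem to $b_c$ in the slice gives $\int_{b_c}dx_1\wedge dx_2=\oint_{A_1(\gamma_c)}x_1\,dx_2=\int_{\gamma_c}\omega_1\omega_2$, the last expression being Chen's iterated integral, where the vanishing $\oint_{\gamma_c}\omega_2=0$ makes the value independent of the base point chosen on $\gamma_c$.

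Third, I would average over $c$ and reinstate $\eta$. By the coarea formula for $h$ one has $\int_0^1\bigl(\int_{\gamma_c}\eta\bigr)\,dc=\int_X\eta\wedge\omega_3$, and this vanishes: since $\omega_3$ is harmonic, $\alpha=\ast\omega_3$ is closed, and $\ast\ast=-1$ on $1$-forms gives $\int_X\eta\wedge\omega_3=-\int_X\eta\wedge\ast\alpha=0$ by the defining orthogonality of $\eta$. Hence $\int_{c_3}dx_1\wedge dx_2\wedge dx_3=\int_0^1\int_{\gamma_c}\omega_1\omega_2\,dc=\int_0^1\varphi(\gamma_c)\,dc$. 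Finally, $\varphi$ modulo $\Z$ factors through $H_1(X;\Z)$ and all the $\gamma_c$ are homologous to $\gamma$, so $\varphi(\gamma_c)-\varphi(\gamma)$ is a continuous $\Z$-valued, hence constant and vanishing, function of $c$; therefore $\int_{c_3}dx_1\wedge dx_2\wedge dx_3=\varphi(\gamma)$ modulo $\Z$. Combined with the first paragraph and the factor $2$ carried by the diagram, this proves the proposition.

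The main obstacle is the geometric input of the middle step: one must verify that the slices $b_c$ can be chosen to depend piecewise smoothly on $c$ so that the swept region is genuinely a $3$-chain with $\partial c_3=A_1(X)$ modulo integral $2$-chains, and one must handle the non-generic values of $c$ at which $\gamma_c$ fails to be a smooth $1$-manifold (where the mod-$\Z$ constancy of $\varphi(\gamma_c)$ bridges the jumps). The remaining ingredients---Green's theorem on each slice, the coarea identity, and the orthogonality killing $\int_X\eta\wedge\omega_3$---are routine once orientations are fixed, and it is precisely this orientation bookkeeping that must be reconciled with the sign conventions in the diagram defining $\nu_X$.
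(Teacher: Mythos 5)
The paper gives no proof at all here (it merely calls the proposition ``a straightforward computation''), and your slicing strategy is indeed the natural route: the reduction $\nu_X(\omega_1\wedge\omega_2\wedge\omega_3)=2\bigl(\int_\gamma\omega_1\omega_2+\int_\gamma\eta\bigr)$ via the defining diagram is correct, and so are the coarea identity and the vanishing of $\int_X\eta\wedge\omega_3$ by the orthogonality defining $\eta$. The first genuine gap is in your middle step. For a regular value $c$ the level set $\gamma_c=h^{-1}(c)$ is in general a \emph{disjoint union} of circles, and only the \emph{total} periods $\int_{\gamma_c}\omega_1=\int_{\gamma_c}\omega_2=0$ vanish; an individual component $\delta^{(j)}$ can carry nonzero integer periods $n_i^{(j)}=\int_{\delta^{(j)}}\omega_i$ that cancel only in the sum. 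In that case $x_1\,dx_2$ is not a $1$-form on the slice torus and $A_1(\delta^{(j)})$ does not lift to a loop in $\R^2$, so the chain of equalities $\int_{b_c}dx_1\wedge dx_2=\oint_{A_1(\gamma_c)}x_1\,dx_2=\int_{\gamma_c}\omega_1\omega_2$ is not even defined component by component. Worse, the componentwise quantity $\int_{\gamma_c}\omega_1\omega_2+\int_{\gamma_c}\eta$ differs from $\overline{\varphi}([\gamma_c])$ by the cross terms $\sum_j\bigl(n_2^{(j)}\int_{p_j}\omega_1-n_1^{(j)}\int_{p_j}\omega_2\bigr)$, where $p_j$ joins the base point to $\delta^{(j)}$; these are not integers in general, so your appeal to ``$\varphi$ mod $\Z$ factors through $H_1(X;\Z)$'' does not apply to the quantity your slices compute. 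The repair is to use a based representative $\beta_c=\prod_j p_j\cdot\delta^{(j)}\cdot p_j^{-1}$: as singular $1$-cycles $A_1(\beta_c)=A_1(\gamma_c)$ (the connector images cancel), the loop $A_1(\beta_c)$ has zero total winding and lifts to $\R^2$; Stokes there, together with naturality of iterated integrals under pullback by $A_1$, gives $\int_{b_c}dx_1\wedge dx_2\equiv\int_{\beta_c}\omega_1\omega_2\ \mathrm{mod}\ \Z$, and $\int_{\beta_c}\omega_1\omega_2+\int_{\gamma_c}\eta=\varphi(\beta_c)\equiv\overline{\varphi}([\gamma])\ \mathrm{mod}\ \Z$.

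The second, more serious gap is the final step, where you integrate a congruence mod $\Z$ over $c\in[0,1]$; that is not a valid operation. Writing $F(c)=\int_{b_c}dx_1\wedge dx_2$ for the slice areas of the chain you actually integrate, the (repaired) argument gives $F(c)+\int_{\gamma_c}\eta=\overline{\varphi}_0+m(c)$ with $m(c)\in\Z$ for a.e.\ $c$, where $\overline{\varphi}_0$ is a fixed real lift of $\overline{\varphi}([\gamma])$. The desired conclusion needs $m$ to be constant: if $m$ jumps by $k\in\Z$ at an interior value $c_0$ --- exactly what happens if the bounding chains chosen on the two sides of a critical value of $h$ differ by $k$ times the fundamental class of the slice --- then the volume changes by $k(1-c_0)\notin\Z$. ``Mod-$\Z$ constancy of $\varphi(\gamma_c)$'' cannot bridge such jumps; it is perfectly consistent with them. (For the same reason your parenthetical ``and vanishing'' is unjustified, though also unnecessary: a constant integer discrepancy suffices.) What actually closes the gap is to construct $c_3$ so that no jumps occur: choose the $b_c$ to vary continuously through the critical values, and observe that any residual defect would put an integer multiple of a horizontal torus $T_{c_0}$, or of $T_0$ coming from the wrap-around $b_1-b_0$, into $\partial c_3$; since $[T_{c_0}]\neq 0$ in $H_2(\R^3/{\Z^3};\Z)$ while $[\partial c_3]=[A_1(X)]=0$, continuity forces all these coefficients to vanish, so the swept object is a genuine $3$-chain with $\partial c_3=A_1(X)$ and $m$ is constant. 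This is the same point that underlies the well-definedness of $\int_{c_3}$ mod $\Z$ in the first place (two chains with the \emph{same} boundary differ by a $3$-cycle, whose volume is an integer since $dx_1\wedge dx_2\wedge dx_3$ is an integral class). In short, the ``main obstacle'' you defer at the end is the heart of the proof, and the mod-$\Z$ invariance you invoke does not dispatch it.
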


\subsection{The Chow group}
\label{The Chow group}
The harmonic volume can be applied to the nontriviality of the Ceresa cycles
in the Jacobian varieties of compact Riemann surfaces.
From this complex analytic method, the harmonic volume can capture more detailed information of these cycles.
The Ceresa cycle is an element of the Chow group that, to begin, we need to define.
A general reference here is Fulton \cite{0885.14002}.

Let $V$ be a smooth projective variety over $\C$.
The group $\mathcal{Z}_{k}(V)$ is defined to be
the free abelian group generated by the irreducible subvarieties
$W$ on $V$ of dimension $k$.
It is called the group of algebraic cycles\index{algebraic cycles} of dimension $k$.
We consider the algebraic equivalence on $V$.
The algebraic cycle $Z\in \mathcal{Z}_{k}(V)$
is algebraically equivalent
to zero\index{algebraically equivalent
to zero} if there exists a smooth curve $C$,
a cycle $T\in \mathcal{Z}_{k}(V\times C)$,
and two points $x_1,x_2\in C$ such that
$Z=i_1^{\ast}(T)-i_2^{\ast}(T)$,
where $i_j\colon V\hookrightarrow V\times C$ is $i_j(x)=
(x,x_j)$ for $j=1,2$.
For $C=\C P^1$, we call it rationally equivalent.
Denote
\[
\mathcal{Z}_{k}(V)_{\textrm{alg}}
=\{Z\in \mathcal{Z}_{k}(V); Z \text{ is algebraically equivalent to }0\}
\]
and $\mathcal{Z}_{k}(V)_{\textrm{rat}}$ in the similar way.
We define the Chow group\index{Chow group} of dimension $k$ by
\[
\CH_k(V):= \mathcal{Z}_{k}(V)/{\mathcal{Z}_{k}(V)_{\textrm{rat}}}.
\]
Set $\CH_k(V)_{\textrm{alg}}
:= \mathcal{Z}_{k}(V)_{\textrm{alg}}/{\mathcal{Z}_{k}(V)_{\textrm{rat}}}$
which is the group of algebraically trivial cycles modulo rational equivalence.
The cycle class map $\CH_k(V)\to H_{2k}(V;\Z)$ is obtained by
linearly extending the map to a subvariety $i\colon W\hookrightarrow V$
associated with its homology class $i_{\ast}[W]$.
The kernel of this map is denoted by
$\CH_k(V)_{\textrm{hom}}$.
It is the group of homologically trivial cycles modulo rational equivalence.
We have known inclusions
\[
\CH_k(V)_{\textrm{alg}}\subset \CH_k(V)_{\textrm{hom}}\subset \CH_k(V).
\]

The problem is to determine whether a cycle homologous to zero in $V$ is algebraically equivalent to zero.
If the cycle homologous to zero is not, then we find the Griffiths group of $V$ is nontrivial.
Here the $k$-th Griffith group\index{Griffith group} $\mathrm{Griff}_{k}(V)$ is defined by
$\mathrm{Griff}_{k}(V):=\CH_{k}(V)_{\textrm{hom}}/{\CH_{k}(V)_{\textrm{alg}}}$.
In particular, we are interested in this group for varieties and cycles defined over $\Z$.
See \cite{0897.14004} for example.

In this subsection and Sections \ref{Abel-Jacobi maps and harmonic volume} and \ref{Pointed Torelli Theorem}, we use the notation $H^1=H^1(X; \Z)$ and $H_1=H_1(X; \Z)$.
Let $J(X)$ (or $J$) be the Jacobian variety\index{Jacobian variety} of the compact Riemann surface $X$
of genus $g\geq 3$.
We denote by $H^{1,0}$ the complex vector space of holomorphic 1-forms on $X$,
and fix a base point $x\in X$.
The Abel-Jacobi map\index{Abel-Jacobi map} $X\to J$ is defined by
\[
A_x\colon X\ni y\mapsto
\left(\omega\mapsto \int_{x}^{y}\omega \right)
\in J={(H^{1,0})}\spcheck/{H_1},
\]
where $\spcheck$ means the complex linear dual.
We define an inclusion map by $H_{1}\ni \gamma \mapsto \left(\omega\mapsto \int_{\gamma}\omega\right)\in {(H^{1,0})}\spcheck$.
If we fix $\omega_1,\omega_2,\ldots,\omega_g$ as a basis of $H^{1,0}$, then $J$ is a $g$-dimensional complex torus obtained as the quotient of $\C^g$ by an abelian group.
Since $g$ is positive, $X$ can be embedded into $J$.
We may identify $H^{1}$ with $H^1(J ;\Z)$
and $\wedge^k H^1$ with $H^{k}(J; \Z)$ for $1\leq k\leq 2g$.
Let $X^k$ denote the $k$-fold product of $X$
and let $X_k$ denote the $k$-th symmetric product.
We have a commutative diagram
\begin{equation}
\label{AJ commutative diagram}
\SelectTips{cm}{}\xymatrix{
 X^k \ar[r]^{(A_x)^{k}} \ar[d]& J^k \ar[d]\\
 X_k \ar[r]^{(A_x)_{k}} & J,
}
\end{equation}
where the left-hand side map is the natural projection,
the right-hand an addition,
and $(A_x)_{k}$ is the induced homomorphism.
With an abuse of notation, we also denote the latter by $A_x$;
its image is denoted by $W_k(x):=A_x(X_{k})$.
For $x,y\in X$,
the algebraic $k$-cycle $W_k(x)-W_k^{\pm}(y)$ in $J$
is called the $k$-th Ceresa cycle\index{Ceresa cycle}.
Here we denote by $W_k^{-}(y)$ the image of $W_k(y)$
under multiplication by $-1$ and $W_k^{+}(y)=W_k(y)$.
The multiplication induces the identity map on $H_{2k}(J; \Z)$.
The $k$-th Ceresa cycle is homologous to zero,
{\it i.e.}, $W_k(x)-W_k^{\pm}(y)\in \CH_k(J)_{\textrm{hom}}$.
Since $W_k(x)-W_k(y)\in \CH_k(J)_{\textrm{alg}}$,
the class of $W_k(x)-W_k^{-}(y)$ modulo algebraic equivalence
does not depend on $x$ and $y$.
We omit the base points $x,y$, unless otherwise stated.

If $X$ is hyperelliptic, then $W_k-W_k^{-}\in \CH_k(J)_{\textrm{alg}}$.
Indeed, we may choose a Weierstrass point $x\in X$ as base point.
The hyperelliptic curve $X$ has the hyperelliptic involution $\iota$ which is a
biholomorphism of $X$ of order $2$ and fixes all the Weierstrass points in $X$.
Since the action of $\iota$ on $H_{1}$ is multiplication by $-1$,
the multiplication by $-1$ on $J(X)$ restricts to $\iota$ on $X$.
Then we have $W_k-W_k^{-}\in \CH_k(J)_{\textrm{alg}}$.
If $k=g-1$, then the cycle $W_{g-1}-W_{g-1}^{-}$ is known to be trivial.
Ceresa's theorem \index{Ceresa's theorem} \cite{zbMATH03855282} implies that
$\mathrm{Griff}_{k}(J(X))\neq 0$
for a generic (nonhyperelliptic) curve $X$ of genus $g\geq 3$ for $1\leq k\leq g-2$.
In Section \ref{Nontrivial algebraic cycles in the Jacobian varieties},
we give explicit $X$'s such that $\mathrm{Griff}_{k}(J(X))\neq 0$.

\section{Abel-Jacobi maps and harmonic volume}
\label{Abel-Jacobi maps and harmonic volume}
To show a relation between the harmonic volume
$\nu_{X}=2I_{X}$ and the image of the Abel-Jacobi map of Griffiths,
we begin to recall the definition of
the Hodge structure on a $\Z$-module of finite rank
and an intermediate Jacobian of $V$.
We give a sufficient condition for
the Ceresa cycle of $\CH_1(J(X))_{\textrm{hom}}$
to be algebraically nontrivial.

A {\it Hodge structure}\index{Hodge structure} of weight-$w$ on a $\Z$-module $H_{\Z}$
of finite rank is a direct sum decomposition
\begin{center}
$\displaystyle H_{\C}:=H_{\Z}\otimes_{\Z}\C=\bigoplus_{p+q=w}H^{p,q}$
with $H^{p,q}=\overline{H^{q,p}}$.
\end{center}
The Hodge filtration associated to this Hodge structure is given by
\[
F^{p}H_{\C}=\bigoplus_{r\geq p}H^{r,s}.
\]
We immediately obtain
$H^{p,q}=F^{p}H_{\C}\cap \overline{F^{q}H_{\C}}$
and the decreasing filtration
\[
H_{\C}\supset \cdots \supset F^{p}H_{\C}\supset F^{p+1}H_{\C}\supset \cdots.
\]
Conversely, a decreasing filtration $F^{p}$ of $H_{\C}$ with the condition
$F^{p}\cap \overline{F^{q}}=0$ whenever $p+q=k+1$ determines a weight-$k$
Hodge structure by putting
\[H^{p,q}=F^{p}\cap \overline{F^{q}}.\]
If $V$ is a compact K\"{a}hler manifold,
the cohomology group $H^{w}(V; \Z)$ underlies a Hodge structure of weight-$w$.
Here, $H^{p,q}$ is the space of cohomology classes whose harmonic representative is of type $(p,q)$.
For two Hodge structures $A$ and $B$ of weight $w$ and $v$ respectively,
we have Hodge structures $A\otimes B$ and $\Hom(A,B)$
of weight $w+v$ and $v-w$ respectively:
\[
(A\otimes B)^{p,q}=\bigoplus_{i,j} A^{i,j}\otimes B^{p-i,q-j}
\]
and
\[
\Hom(A,B)^{p,q}=\{f\colon A_{\C}\to B_{\C}; f(A^{i,j})\subset B^{i+p,j+q}\}.
\]
Here $A_{\C}=\oplus_{p+q=w}A^{p,q}$ and $B_{\C}=\oplus_{p+q=v}B^{p,q}$.
If $V$ is a compact K\"{a}hler manifold,
the homology group $H_{w}(V; \Z)$ carries a natural Hodge structure of weight $-k$.
Indeed, we recall the isomorphism $H_{k}(V;\C)\cong \Hom_{\C}(H^{k}(V; \C),\C)$
and obtain the direct sum decomposition
\[
H_{w}(V; \C)=\bigoplus_{-p-q=-w}H_{w}(V; \C)^{-p,-q},
\]
where $
H_{w}(V; \C)^{-p,-q}
=\{
f\colon H^{w}(V; \C)\to \C\, ; f(H^{r,s})=0 \text{ whenever }(r,s)\neq (p,q)
\}.
$

If $H_{\Z}$ has a Hodge structure of odd weight $2k+1$, we define a complex torus by
\[
J(H):=H_{\C}/{(F^{k+1}H_{\C}+H_{\Z})}=\overline{F^{k+1}H_{\C}}/{H_{\Z}}.
\]
The real torus $H_{\R}/{H_{\Z}}$ is denoted by $J_{\R}H$,
where $H_{\R}$ denotes $H_{\Z}\otimes_{\Z}\R$.
The inclusion $H_{\R}\to H_{\C}$ induces an isomorphism of real Lie
groups $J_{\R}H\to J(H)$.
A natural projection $\R\to \R/{\Z}$ induces an isomorphism of real tori
\[
J_{\R}\Hom(H,\C) \cong \Hom_{\Z}(H_{\Z}, \R/{\Z}).
\]
\begin{lem}
\label{isomorphism of real tori}
We have a natural isomorphism of real tori
\[
J\Hom(H,\C) \to \Hom_{\Z}(H_{\Z}, \R/{\Z}).
\]
\end{lem}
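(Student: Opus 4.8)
The plan is to realize the asserted map as the composite of the two isomorphisms recorded immediately before the statement, so that essentially no new construction is required. The hypothesis is that $H$ carries a Hodge structure whose weight is odd, say $2k+1$; then, taking $\C$ as the trivial weight-$0$ structure, the dual $\Hom(H,\C)$ has weight $-(2k+1)$, which is again odd. Hence both constructions $J(-)$ and $J_\R(-)$ apply to $\Hom(H,\C)$, and the general fact that the inclusion of real into complex points induces an isomorphism of real Lie groups gives
\[
J_\R\Hom(H,\C)\xrightarrow{\ \sim\ }J\Hom(H,\C).
\]
Composing the inverse of this with the already-stated isomorphism $J_\R\Hom(H,\C)\cong\Hom_\Z(H_\Z,\R/\Z)$ induced by $\R\to\R/\Z$ produces the desired arrow, visibly a composite of isomorphisms.

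The steps that must be checked are all bookkeeping. First I would confirm the parity claim---dualizing negates the weight, so oddness is preserved and $J\Hom(H,\C)$ is indeed defined---and that the relevant decomposition $\Hom(H,\C)_\C=F^{p}\oplus\overline{F^{p}}$ holds for the shifted filtration index $p$, which is what makes the first isomorphism valid. Second I would unwind the integral and real structures: since $\C$ has lattice $\Z$, the internal Hom has lattice $\Hom(H,\C)_\Z=\Hom_\Z(H_\Z,\Z)$ and real form $\Hom(H,\C)_\R=\Hom_\Z(H_\Z,\R)$, the latter equality using that $H_\Z$ is free of finite rank. Thus $J_\R\Hom(H,\C)=\Hom_\Z(H_\Z,\R)/\Hom_\Z(H_\Z,\Z)$, and applying $\Hom_\Z(H_\Z,-)$ to $0\to\Z\to\R\to\R/\Z\to 0$ (with $\Ext^1_\Z(H_\Z,\Z)=0$ by freeness) identifies this quotient with $\Hom_\Z(H_\Z,\R/\Z)$; this is precisely the second stated isomorphism.

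Finally I would verify naturality in $H$: the first isomorphism is functorial because it comes from the canonical Hodge splitting together with the real inclusion $H_\R\hookrightarrow H_\C$, and the second from the functoriality of $\Hom_\Z(H_\Z,-)$ applied to a fixed short exact sequence, so the composite is natural as well. I expect no serious obstacle here; the only places demanding care are the index shift in the Hodge filtration of the dual structure (so that $J\Hom(H,\C)$ and the splitting genuinely make sense) and the clean identification of the lattice and real form of $\Hom(H,\C)$. With these in hand the lemma is purely formal.
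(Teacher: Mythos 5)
Your proposal is correct and matches the paper's own (implicit) argument: the paper states the lemma without a separate proof precisely because it is the composite of the two isomorphisms recorded just before it, namely $J\Hom(H,\C)\cong J_{\R}\Hom(H,\C)$ induced by $H_{\R}\hookrightarrow H_{\C}$ and $J_{\R}\Hom(H,\C)\cong\Hom_{\Z}(H_{\Z},\R/{\Z})$ induced by $\R\to\R/{\Z}$. Your bookkeeping checks (odd weight of the dual, identification of the lattice and real form of $\Hom(H,\C)$, exactness of $\Hom_{\Z}(H_{\Z},-)$ on $0\to\Z\to\R\to\R/{\Z}\to 0$) are exactly what the paper leaves tacit, so nothing is missing.
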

We focus on the homology group $H_{2k+1}(V ;\Z)$ with Hodge structure of weight $-2k-1$.
The $k$-th {\it intermediate Jacobian}\index{intermediate Jacobian} of Griffiths is defined by
\[
J_k(V):=J(H_{2k+1}(V ;\Z))
= \dfrac{F^{-k}H_{2k+1}(V; \C)}{H_{2k+1}(V ;\Z)}
= \dfrac{{(F^{k+1}H^{2k+1}(V; \C))}\spcheck}{H_{2k+1}(V ;\Z)},
\]
The $J_{0}(X)$ is the Jacobian variety $J=J(X)$.

For an element $Z\in\CH_k(V)_{\textrm{hom}}$,
we can take a topological $(2k+1)$-chain $W$ so that
$Z=\partial W$.
The integration
$H^{2k+1}(V ;\C)\ni \omega\mapsto
\int_{W}\omega \in \C$
induces the Abel-Jacobi map of Griffiths\index{Abel-Jacobi map!of Griffiths}
\[\Phi_{k}\colon
\CH_k(V)_{\textrm{hom}}\to J_k(V),\]
where $\omega$ is a harmonic $(2k+1)$-form
on $V$ with integral periods \cite[Section 4]{0678.14005}.
We give a key lemma in proving nontriviality for the Ceresa cycles.
\begin{lem}
\label{vanishing condition}
Let $F^{p}$ denote $F^{p}H^{2k+1}(V; \C)$.
If the image $\Phi_{k}(Z)$ is nonvanishing on 
$F^{k+2}+\overline{F^{k+2}}\subset H^{2k+1}(V; \C)$
for $Z\in\CH_k(V)_{\textrm{hom}}$,
then $Z$ is not an element of $\CH_k(V)_{\textrm{alg}}$.

\end{lem}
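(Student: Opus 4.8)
The plan is to derive the lemma from Griffiths' theorem that the Abel--Jacobi map carries algebraically trivial cycles into the maximal abelian subvariety of the intermediate Jacobian, together with the Hodge-theoretic description of that subvariety. Throughout I regard $\Phi_k(Z)$, via Lemma~\ref{isomorphism of real tori}, as an element of $\Hom_{\Z}(H^{2k+1}(V;\Z),\R/{\Z})$, namely $[\omega]\mapsto \int_{W}\omega \bmod \Z$ for a bounding chain $W$ with $\partial W=Z$ and a harmonic representative $\omega$ with integral periods. Extending $\C$-linearly, $\Phi_k(Z)$ may be evaluated on any subspace of $H^{2k+1}(V;\C)$; since $\Phi_k(Z)$ is real, it takes values in $\R/{\Z}$ on the conjugation-invariant subspace $F^{k+2}+\overline{F^{k+2}}$, which is exactly why the hypothesis is phrased there rather than on $F^{k+2}$ alone.

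First I would record the Hodge-theoretic description of the algebraic part. Write $J_k^{a}(V)\subset J_k(V)$ for the maximal abelian subvariety. A rational subtorus $B=J(H_B)$ of $J_k(V)$, attached to a sub-Hodge structure $H_B\subset H_{2k+1}(V;\Q)$, is an abelian variety if and only if $H_B$ has level at most one, i.e.\ $H_{B,\C}$ is concentrated in the two middle types matching $H^{k+1,k}\oplus H^{k,k+1}$ in cohomology. Consequently every abelian subvariety of $J_k(V)$ is supported on this level-one part, so its points, viewed as homomorphisms $H^{2k+1}(V;\Z)\to\R/{\Z}$, annihilate the complementary conjugation-invariant subspace $F^{k+2}+\overline{F^{k+2}}$. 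In particular $J_k^{a}(V)$ vanishes on $F^{k+2}+\overline{F^{k+2}}$.

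Next I would establish the inclusion $\Phi_k(\CH_k(V)_{\textrm{alg}})\subset J_k^{a}(V)$. Given $Z=i_1^{\ast}(T)-i_2^{\ast}(T)$ with $T\in\mathcal{Z}_k(V\times C)$ and $x_1,x_2\in C$, the assignment $x\mapsto \Phi_k(i_x^{\ast}(T)-i_{x_1}^{\ast}(T))$ depends holomorphically on $x$ and extends to a holomorphic map $\psi$ from the smooth projective model $\overline{C}$ into the compact torus $J_k(V)$, with $\psi(x_2)=\Phi_k(Z)$. By the universal property of the Jacobian, $\psi$ factors as $\overline{C}\to\mathrm{Jac}(\overline{C})\to J_k(V)$ through a homomorphism of complex tori, induced by a morphism of Hodge structures whose source is the weight-one structure $H^1(\overline{C})$; since the latter has level one, its image is a level-one sub-Hodge structure of $H_{2k+1}(V;\Q)$, whence the corresponding subtorus $B$ is an abelian variety and $B\subset J_k^{a}(V)$. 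Therefore $\Phi_k(Z)\in J_k^{a}(V)$, and by the previous paragraph $\Phi_k(Z)$ vanishes on $F^{k+2}+\overline{F^{k+2}}$. Taking the contrapositive proves the lemma.

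The main obstacle is the interface between the second and third paragraphs: identifying $J_k^{a}(V)$ with the level-one part, and passing from ``holomorphic map from a curve'' to ``morphism of Hodge structures whose image lies in the level-one part.'' This rests on the standard but nontrivial facts that a rational subtorus is an abelian variety precisely when its Hodge structure has level at most one, and that $H^1(\overline{C})$ contributes only level-one directions; once these are in place, the remainder is bookkeeping with the Hodge filtration and complex conjugation.
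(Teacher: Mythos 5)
Your strategy---reduce to showing that $\Phi_k$ kills $F^{k+2}+\overline{F^{k+2}}$ on $\CH_k(V)_{\textrm{alg}}$, then exploit the curve parametrizing the algebraic equivalence---is sound and genuinely different from the paper's, but as written it has two gaps, one of which is a false statement. First, the assertion in your second paragraph that a rational subtorus of $J_k(V)$ is an abelian variety \emph{if and only if} its Hodge structure has level at most one, hence that every abelian subvariety of $J_k(V)$ annihilates $F^{k+2}+\overline{F^{k+2}}$, is not true. A subtorus of $J_k(V)$ corresponds to a rational subspace of $H_{2k+1}(V;\Q)$ whose real points are stable under the Griffiths complex structure; such a subspace need not be a sub-Hodge structure at all, and even for genuine sub-Hodge structures algebraicity does not force level $\leq 1$: every one-dimensional complex torus is an elliptic curve, so for instance $J_1(V)$ for a rigid Calabi--Yau threefold ($h^{3,0}=1$, $h^{2,1}=0$) is an abelian variety supported entirely on a level-three Hodge structure, and its nonzero points do not vanish on $F^{3}+\overline{F^{3}}=H^3(V;\C)$. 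So ``$\Phi_k(Z)$ lies in the maximal abelian subvariety'' does not by itself yield the vanishing you need. Second, in your third paragraph the clause ``induced by a morphism of Hodge structures'' is precisely the nontrivial point, and it does not follow from the universal property of the Jacobian: the Albanese property produces a homomorphism of \emph{complex tori}, and a torus homomorphism into $J_k(V)$ only intertwines the coarse splitting $H^{2k+1}(V;\C)=F^{k+1}\oplus\overline{F^{k+1}}$ that defines the Griffiths complex structure, not the full Hodge decomposition; one cannot conclude from it that the image of $H_1(\overline{C};\Q)$ lies in the level-one part.

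The standard repair, when one insists on the intermediate-Jacobian route, bypasses both points: define the morphism of Hodge structures directly from the correspondence, $[T]_{\ast}\colon H_1(\overline{C};\Q)\to H_{2k+1}(V;\Q)$, $\gamma\mapsto (p_V)_{\ast}([T]\cdot p_{\overline{C}}^{\,\ast}\gamma)$, which \emph{is} a morphism of Hodge structures (of bidegree $(-k,-k)$) because $[T]$ is an algebraic class, and then prove that the Abel--Jacobi map is functorial with respect to this correspondence, i.e.\ that your map $\psi$ agrees with the torus map induced by $[T]_{\ast}$. That functoriality is the actual content missing from your argument; once it is in place, $\Phi_k(Z)$ lies in the subtorus attached to the level-one sub-Hodge structure $\mathrm{Im}\,[T]_{\ast}$, whose points kill $F^{k+2}+\overline{F^{k+2}}$ by type considerations, and your second paragraph becomes unnecessary. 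By contrast, the paper's own proof is far more elementary and avoids normal functions and any classification of subtori: if $Z$ is algebraically equivalent to zero, the part of $T$ lying over a path joining $x_2$ to $x_1$ in $\overline{C}$ is a $(2k+1)$-chain $W$ with $\partial W=Z$ supported on the algebraic subset $S=p_V(\mathrm{supp}\,T)\subset V$ of complex dimension $k+1$; any harmonic form of type $(p,q)$ with $p>k+1$ or $q>k+1$ restricts to zero on $S$ for dimension reasons, hence $\int_W\omega=0$.
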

\begin{proof}
We have only to prove that
the image of $\Phi_{k}$ on $\CH_k(V)_{\textrm{alg}}$
vanishes on $F^{k+2}+\overline{F^{k+2}}\subset H^{2k+1}(V; \C)$.
If an algebraic cycle $Z$ is algebraically equivalent to zero in $V$,
then there exists a topological $(2k+1)$-chain $W$ such that $\partial W=Z$ and $W$ lies on $S$,
where $S$ is an algebraic (or complex analytic) subset of $V$ of complex dimension $k+1$.
The chain $W$ is unique up to $(2k+1)$-cycles.
We may assume that $\omega$ consists of the elements
of $H^{p,q}=F^{p}\cap \overline{F^{q}}$ for $p>k+1$ or $q>k+1$.
Then we have
$\int_{W}\omega =0$.
\end{proof}

From Lemma \ref{isomorphism of real tori}, we have
$J_{k}(V)\cong \Hom_{\Z}(H^{2k+1}(V; \Z),\R/{\Z})$.
Suppose that $V$ is the Jacobian variety $J=J(X)$.
Using the identification $H^{2k+1}(J; \Z)=\wedge^{2k+1}H^{1}$,
we may consider the Abel-Jacobi map of Griffiths as the homomorphism
\[
\Phi_k\colon \CH_k(J)_{\textrm{hom}}
\to \Hom(\wedge^{2k+1}H^{1},\R/{\Z}).
\]
We consider $k=1$ in this subsection.
Let $\nu^{\ast}$ denote the Abel-Jacobi image\index{Abel-Jacobi image} $\Phi_{1}(W_1-W_1^{-})$.
Harris (\cite{0527.30032}, \cite[Proposition 2.1]{1063.14010}) proved that
$(\wedge^3 H^1)^{\prime}$ can be identified with the primitive subgroup of $H^3(J;\Z)=\wedge^3 H^1$ in the sense of Lefschetz,
denoted by $H^3_{\mathrm{prim}}(J;\Z)$.
By this identification and the natural projection
$\Hom_{\Z}(H^{3}(J;\Z),\R/{\Z})\to
\Hom_{\Z}(H^3_{\mathrm{prim}}(J;\Z),\R/{\Z})$,
we consider $\nu^{\ast}$
as an element of
$\Hom_{\Z}((\wedge^3 H^1)^{\prime},\R/{\Z})$.
From Proposition \ref{another definition of harmonic volume},
the 3-cycle $c_3$ in the torus $\R^3/{\Z^3}$ can be identified with $X$ in $J$.
\begin{thm}[\cite{0527.30032,1063.14010}]
\label{Abel-Jacobi}
The Abel-Jacobi image
$\nu^{\ast}$ equals the harmonic volume $\nu_{X}=2I_{X}$.
\end{thm}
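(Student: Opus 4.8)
The plan is to realize both $\nu^\ast$ and $\nu_X$ as the integral of one and the same translation-invariant $3$-form over a $3$-chain in $J$, and then transport the computation to the torus $\R^3/\Z^3$, where Proposition \ref{another definition of harmonic volume} already expresses $\nu_X$. Fix a primitive class $\omega_1\wedge\omega_2\wedge\omega_3\in(\wedge^3 H^1)'$ with $\omega_1,\omega_2,\omega_3\in H$ integral harmonic $1$-forms; since both sides are linear forms on $(\wedge^3 H^1)'$ it suffices to match them on such generators.

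First I would set up the projection to the small torus. Each $\omega_k\in H$ defines a map $J\to\R/\Z$, $[\gamma]\mapsto\int_{\gamma}\omega_k\bmod\Z$, well defined because $\omega_k$ has integral periods; the three together give a group homomorphism $\pi_\omega\colon J\to\R^3/\Z^3$ whose pullback of the coordinate forms satisfies $\pi_\omega^{\ast}(dx_k)=\omega_k$ as translation-invariant $1$-forms on $J$. By construction $A_1=\pi_\omega\circ A_x$, so $\pi_\omega(W_1)=A_1(X)$, and since $\pi_\omega$ is a homomorphism and $W_1^-=(-1)\cdot W_1$ we get $\pi_\omega(W_1^-)=-A_1(X)=A_1(X)^-$.

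Next I would choose a topological $3$-chain $W$ in $J$ with $\partial W=W_1-W_1^-$, so that by the definition of $\Phi_1$ and the identification $J_1(J)\cong\Hom_\Z(H^3(J;\Z),\R/\Z)$ of Lemma \ref{isomorphism of real tori} one has $\nu^{\ast}(\omega_1\wedge\omega_2\wedge\omega_3)=\int_W\omega_1\wedge\omega_2\wedge\omega_3\bmod\Z$. Because $\omega_1\wedge\omega_2\wedge\omega_3=\pi_\omega^{\ast}(dx_1\wedge dx_2\wedge dx_3)$, this equals $\int_{\pi_\omega(W)}dx_1\wedge dx_2\wedge dx_3$. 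Now $\pi_\omega(W)$ is a $3$-chain in $\R^3/\Z^3$ with boundary $A_1(X)-A_1(X)^-$; writing $\iota$ for multiplication by $-1$ and letting $c_3$ bound $A_1(X)$ as in Proposition \ref{another definition of harmonic volume}, the chain $c_3-\iota(c_3)$ has the same boundary, so $\pi_\omega(W)$ and $c_3-\iota(c_3)$ differ by an integral $3$-cycle. Since $dx_1\wedge dx_2\wedge dx_3$ generates $H^3(\R^3/\Z^3;\Z)$, its integral over any integral $3$-cycle lies in $\Z$, so modulo $\Z$ I may replace $\pi_\omega(W)$ by $c_3-\iota(c_3)$.

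The factor of $2$, which is the crux, comes from $\iota$ being orientation-reversing in real dimension $3$: from $\iota^{\ast}dx_k=-dx_k$ one gets $\iota^{\ast}(dx_1\wedge dx_2\wedge dx_3)=-dx_1\wedge dx_2\wedge dx_3$, whence $\int_{\iota(c_3)}dx_1\wedge dx_2\wedge dx_3=-\int_{c_3}dx_1\wedge dx_2\wedge dx_3$. Combining the last two steps gives $\int_{\pi_\omega(W)}dx_1\wedge dx_2\wedge dx_3\equiv 2\int_{c_3}dx_1\wedge dx_2\wedge dx_3\bmod\Z$, which is exactly $\nu_X(\omega_1\wedge\omega_2\wedge\omega_3)$ by Proposition \ref{another definition of harmonic volume}. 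The hard part is the bookkeeping of the previous two steps: confirming that pushing chains forward under $\pi_\omega$ is compatible with the reduction mod $\Z$ (so that the $3$-cycle ambiguities in $W$ and in $c_3$ contribute only integers), and checking that the whole computation descends consistently through the projection $\Hom_\Z(H^3(J;\Z),\R/\Z)\to\Hom_\Z(H^3_{\mathrm{prim}}(J;\Z),\R/\Z)$ onto the primitive part on which both $\nu^{\ast}$ and $\nu_X$ are defined. Once these compatibilities are in place, agreement on the generators $\omega_1\wedge\omega_2\wedge\omega_3$ yields $\nu^{\ast}=\nu_X=2I_X$.
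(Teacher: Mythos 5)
Your argument is correct and is essentially the proof the paper itself indicates (following Harris): the paper compresses it into the remark that, by Proposition \ref{another definition of harmonic volume}, the $3$-chain $c_3$ in $\R^3/{\Z^3}$ can be identified with $X$ in $J$, which is exactly what your projection $\pi_\omega$, the chain comparison $\pi_\omega(W)\equiv c_3-\iota(c_3)$ modulo integral $3$-cycles, and the orientation computation $\int_{c_3-\iota(c_3)}=2\int_{c_3}$ spell out. The one step you assert without justification --- that $(\wedge^3 H^1)'$ is generated over $\Z$ by decomposable (hence isotropic) classes, so that checking equality on them suffices for homomorphisms into $\R/{\Z}$ --- is true and is likewise left implicit in the paper's ``for simplicity'' reduction.
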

This theorem and Lemma \ref{vanishing condition} give us
\begin{prop}\label{cycle and intermediate Jacobian}
If there exists an $\omega\in \wedge^3 H^1
\cap (\wedge^3 H^{1,0}+\wedge^3 H^{0,1})$ such that
$\nu_{X}(\omega)$ is nonzero  modulo $\Z$, then
$W_1-W_1^{-}\not\in \CH_1(J)_{\textrm{alg}}$, {\it i.e.},
$W_1-W_1^{-}$ is algebraically nontrivial in $J$.
\end{prop}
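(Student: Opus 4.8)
The plan is to specialize Lemma \ref{vanishing condition} to $k=1$, $V=J$ and $Z=W_1-W_1^{-}$, and to feed it the nonvanishing supplied by the hypothesis through Theorem \ref{Abel-Jacobi}. For $k=1$ the relevant cohomology is $H^{3}(J;\C)=\wedge^3 H^1_{\C}$ with Hodge decomposition $H^{p,q}=\wedge^p H^{1,0}\wedge \wedge^q H^{0,1}$, so the Hodge filtration gives $F^{k+2}=F^3=H^{3,0}=\wedge^3 H^{1,0}$ and $\overline{F^{k+2}}=H^{0,3}=\wedge^3 H^{0,1}$. Hence
\[
F^{k+2}+\overline{F^{k+2}}=\wedge^3 H^{1,0}+\wedge^3 H^{0,1},
\]
which is exactly the complex subspace named in the hypothesis. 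This first paragraph is pure Hodge-filtration bookkeeping.

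Next I would reconcile the two domains. The homomorphism $\nu_X$ lives on the primitive part $(\wedge^3 H^1)^{\prime}\cong H^3_{\mathrm{prim}}(J;\Z)$, whereas Lemma \ref{vanishing condition} and the hypothesis refer to $\wedge^3 H^1=H^3(J;\Z)$. The key observation is that the subspace $\wedge^3 H^{1,0}+\wedge^3 H^{0,1}=H^{3,0}\oplus H^{0,3}$ is automatically primitive: writing $L=\theta\wedge(\ )$ for cup product with the polarization class $\theta\in H^{1,1}$, the hard Lefschetz operator $L^{g-2}$ sends $H^{3,0}$ into $H^{g+1,g-2}=\wedge^{g+1}H^{1,0}\wedge \wedge^{g-2}H^{0,1}$, which vanishes because $\dim_{\C} H^{1,0}=g$; the same holds for $H^{0,3}$ by conjugation. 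Therefore any integral class $\omega\in \wedge^3 H^1\cap(\wedge^3 H^{1,0}+\wedge^3 H^{0,1})$ already lies in $(\wedge^3 H^1)^{\prime}$, so $\nu_X(\omega)$ is defined, and under the projection $\Hom_{\Z}(H^3(J;\Z),\R/\Z)\to \Hom_{\Z}(H^3_{\mathrm{prim}}(J;\Z),\R/\Z)$ one has $\Phi_1(W_1-W_1^{-})(\omega)=\nu^{\ast}(\omega)$ with no loss of information, since restriction to $H^3_{\mathrm{prim}}$ and evaluation at a primitive $\omega$ commute.

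With these identifications in place, the hypothesis $\nu_X(\omega)\neq 0 \bmod \Z$ combined with Theorem \ref{Abel-Jacobi} (which gives $\nu^{\ast}=\nu_X$) says precisely that $\Phi_1(W_1-W_1^{-})$ is nonvanishing on $F^{k+2}+\overline{F^{k+2}}$ for $k=1$. Applying Lemma \ref{vanishing condition} then yields $W_1-W_1^{-}\notin \CH_1(J)_{\mathrm{alg}}$, which is the assertion.

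The genuine content, and the step I expect to require the most care, is the reconciliation in the second paragraph: one must verify both that the integral classes in $\wedge^3 H^{1,0}+\wedge^3 H^{0,1}$ are honestly primitive—so that $\nu_X$, defined only on the primitive part, can be evaluated on them—and that passing to the primitive quotient $\Hom_{\Z}(H^3(J;\Z),\R/\Z)\to \Hom_{\Z}(H^3_{\mathrm{prim}}(J;\Z),\R/\Z)$ does not destroy the nonvanishing coming from Theorem \ref{Abel-Jacobi}. Once this compatibility between $\Phi_1$ and $\nu_X$ is secured, Lemma \ref{vanishing condition} closes the argument immediately.
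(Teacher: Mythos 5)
Your proof is correct and follows the paper's own route exactly: combine Theorem \ref{Abel-Jacobi} with Lemma \ref{vanishing condition} at $k=1$, using $F^{3}H^3(J;\C)=\wedge^3 H^{1,0}$ and $\overline{F^{3}}=\wedge^3 H^{0,1}$. Your hard Lefschetz verification that integral classes in $\wedge^3 H^{1,0}+\wedge^3 H^{0,1}$ are primitive, so that $\nu_X$ can be evaluated on them and no information is lost under the projection to $\Hom_{\Z}(H^3_{\mathrm{prim}}(J;\Z),\R/\Z)$, is a detail the paper leaves implicit (it relies on Harris's identification $(\wedge^3 H^1)^{\prime}\cong H^3_{\mathrm{prim}}(J;\Z)$ without comment), and it is a welcome addition rather than a departure.
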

We remark that $F^{3}H^3(J;\C)=\wedge^3 H^{1,0}$.
Similarly, we obtain a sufficient condition that $W_k-W_k^-$ is algebraically nontrivial in $J$.
See Faucette \cite{0783.14003} and Otsubo \cite{1236.14009}.

\section{Pointed Torelli Theorem}
\label{Pointed Torelli Theorem}
We explain the relationship between the pointed harmonic volume and the congruence group
of an extension of mixed Hodge structures.
The classical Torelli theorem states that a compact Riemann surface $X$ is determined by its Jacobian variety $J(X)$ regarded as a complex torus.
We introduce an extension of this theorem for pointed compact Riemann surfaces.
In order to state the theorem, we need to define a mixed Hodge structure on $\Z$-module of finite rank using Chen's iterated integrals.
The pointed harmonic volume plays an important role for the proof of the above theorem.
It determines the moduli space of pointed compact Riemann surfaces.
In the latter half of this section, we give the higher Abel-Jacobi image
$\Phi_{k}(W_{k}(x)-W_{k}^{\pm}(y))$.
This suggests an algorithm in proving the nontriviality condition for the higher Ceresa cycles.

A {\it mixed Hodge structure}\index{mixed Hodge structure} on a $\Z$-module $H_{\Z}$ of finite rank consists of two filtrations, an increasing weight filtration $W_{p}H_{\Q}$ and a decreasing Hodge filtration $F^{p}H_{\C}$ which induces a $\Q$-Hodge structure of weight-$k$ on each graded piece
\[
\mathrm{Gr}_{k}^{W}H_{\Q}=W_{k}H_{\Q}/{W_{k-1}H_{\Q}}.
\]
Here $H_{\Q}$ denotes $H_{\Z}\otimes_{\Z}\Q$.
For mixed Hodge structures $A$ and $B$,
let $\Ext(A,B)$ denote the group of congruence
classes of extensions of mixed Hodge structures,
{\it i.e.}, exact sequences
\[
\SelectTips{cm}{}\xymatrix{
0 \ar[r] & B
\ar[r]^{\beta} & H
\ar[r]^{\alpha} & A
\ar[r] & 0}
\]
of mixed Hodge structures with the natural equivalence relation and Baer sum.
If $v-w=-1$ for the weight $v$ and $w$ of Hodge structures $A$ and $B$ respectively,
then we have the isomorphism
\begin{equation}
\label{extensions of MHS}
\tau\colon \Ext(A,B)\cong J\Hom(A,B).
\end{equation}
Let $E$ denote a congruence class of an extension of $A$ by $B$.
Choose a retraction $r_{\Z}\colon H_{\Z}\to B_{\Z}$, {\it i.e.},
$r_{\Z}\circ \beta=\mathop{\rm id}_{B_{\Z}}$ defined over $\Z$ and
a section $s_{F}\colon A\to H$, {\it i.e.},
$\alpha \circ s_{F} =\mathop{\rm id}_{A}$
which preserves the Hodge filtration.
Then $\tau(E)\in J\Hom(A,B)$ is represented by
$r_{\C}\circ s_{F}\in \Hom(A,B)_{\C}$.
See Carlson \cite{0471.14003} for the general case.

We concentrate on the compact Riemann surface $X$.
Using the augmentation ideal $J_{x}$ of $\Z\pi_1(X,x)$,
we obtain the short exact sequence
\[
\SelectTips{cm}{}\xymatrix{
0 \ar[r] & J_{x}^{2}/{J_{x}^{3}}
\ar[r] & J_{x}/{J_{x}^{3}}
\ar[r] & J_{x}/{J_{x}^{2}}
\ar[r] & 0.
}
\]
The natural map
\[\Z\pi_1(X, x)\ni \gamma \mapsto \gamma -1 \in J_{x}\]
induces an isomorphism $H_1\cong J_{x}/{J_{x}^{2}}$ for any point $x\in X$.
For the additive group $M$, $M^{\ast}$ denotes $\Hom_{\Z}(M,\Z)$.
It is known that $\left(J_{x}^{2}/{J_{x}^{3}}\right)^{\ast}$
is isomorphic to $K\subset H^1\otimes H^1$ which is the kernel of
$(\ , \ ) :H^1\otimes H^1 \to \Z$ induced by the intersection pairing.
We obtain the dual short exact sequence
\begin{equation}
\label{dual short exact sequence}
\SelectTips{cm}{}\xymatrix{
0 \ar[r] & H^1
\ar[r]^<(.25){\alpha} & \left(J_{x}/{J_{x}^{3}}\right)^{\ast}
\ar[r]^>(.75){\beta} & K
\ar[r] & 0.}
\end{equation}
From Corollary \ref{de Rham theorem},
$\left(J_{x}/{J_{x}^{3}}\right)^{\ast}$
is regarded as a sublattice of $H^0(\overline{B}_{2}(X), x)$.
In this situation,
the image $\alpha(\omega)$ is a linear functional $\int \omega$,
where $\omega\in H^{1}$ is a real harmonic 1-form with integral periods.
We give the homomorphism $\beta$ as follows.
Let $\varphi\in \left(J_{x}/{J_{x}^{3}}\right)^{\ast}$ be an element.
Take a homotopy functional $\overline{\varphi}$ on $\Z\pi_{1}(X,x)$
as in (\ref{homotopy functional}).
Here $a_i, b_i\in H^1$.
We set $\beta(\varphi)=\sum a_i\otimes b_i\in K$.

In the general case, Hain \cite{0654.14006} defined a mixed Hodge structure on the truncation of the fundamental group ring of complex manifolds.
We explain the mixed Hodge structure on $\left(J_{x}/{J_{x}^{3}}\right)^{\ast}$
as a sublattice of $H^0(\overline{B}_{2}(X), x)$.
For $i=0,1,2$, the weight filtration $W_i$ is obtained by $H^0(\overline{B}_{i}(X), x)$.
Here $W_i$ is defined over $\Q$.
The Hodge filtration $F^{p}$ is induced by $n$ holomorphic 1-forms on $X$ with $n\geq p$
in the linear functional of $H^0(\overline{B}_{2}(X), x)$ defined over $\C$.
See Pulte \cite{0678.14005} for details.
Using Poincar\'e duality,
Pulte \cite[Lemma 3.7]{0678.14005} obtained the isomorphism of Hodge structures
\[
\Hom(K, H^1)\to \Hom(K\otimes H^1, \C).
\]
By combining (\ref{extensions of MHS}) and this isomorphism,
Lemma \ref{isomorphism of real tori} gives us
\begin{thm}[{\cite[Theorem 3.9]{0678.14005}}]
We have an isomorphism
\[
\Ext(K, H^1)\ni m_{x} \mapsto
I_{(X,x)}\in \Hom_{\Z}(K\otimes H^1, \R/{\Z}).
\]
\end{thm}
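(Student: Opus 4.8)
The plan is to realize the stated correspondence as a composite of three isomorphisms already available in the text and then to identify the image of the single distinguished class $m_{x}$ by Carlson's explicit description of the extension map. The dual short exact sequence (\ref{dual short exact sequence}) is an extension of mixed Hodge structures, so its congruence class is an element $m_{x}\in \Ext(K,H^{1})$; establishing the theorem means (i) producing a natural group isomorphism $\Ext(K,H^{1})\cong \Hom_{\Z}(K\otimes H^{1},\R/{\Z})$ and (ii) checking that it carries $m_{x}$ to the pointed harmonic volume $I_{(X,x)}$.

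For (i), I would chain the following maps. Since $\Hom(K,H^{1})$ carries a Hodge structure of odd weight, $J\Hom(K,H^{1})$ is defined and the isomorphism
\[
\tau\colon \Ext(K,H^{1})\cong J\Hom(K,H^{1})
\]
of (\ref{extensions of MHS}) applies. Next, the Hodge-structure isomorphism $\Hom(K,H^{1})\to \Hom(K\otimes H^{1},\C)$ obtained by Pulte from Poincar\'e duality induces an isomorphism of the associated tori $J\Hom(K,H^{1})\cong J\Hom(K\otimes H^{1},\C)$. Finally Lemma \ref{isomorphism of real tori} gives $J\Hom(K\otimes H^{1},\C)\cong \Hom_{\Z}(K\otimes H^{1},\R/{\Z})$. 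Each arrow is a group isomorphism, so their composite is the desired isomorphism; its being a homomorphism for the Baer sum on the left and the additive structure on the right is inherited from $\tau$.

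For (ii) I would use Carlson's \cite{0471.14003} recipe for $\tau$ recalled after (\ref{extensions of MHS}): choose an integral retraction $r_{\Z}\colon (J_{x}/{J_{x}^{3}})^{\ast}\to H^{1}$ of $\alpha$ and a section $s_{F}\colon K\to (J_{x}/{J_{x}^{3}})^{\ast}$ of $\beta$ preserving the Hodge filtration, so that $\tau(m_{x})$ is represented by $r_{\C}\circ s_{F}\in \Hom(K,H^{1})_{\C}$. The key observation is that the homotopy functional $\overline{\varphi}$ attached to $\sum a_{i}\otimes b_{i}\in K$ in the definition of $I_{(X,x)}$ is precisely a real lift of $\sum a_{i}\otimes b_{i}$ along $\beta$, namely an element of $(J_{x}/{J_{x}^{3}})^{\ast}\otimes_{\Z}\R$ whose length-two part is $\sum_i\int a_{i}b_{i}$ and whose correction term is $\int\eta$. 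Evaluating $r_{\C}\circ s_{F}$ against $c\in H^{1}$ and transporting the result through Pulte's duality and Lemma \ref{isomorphism of real tori} measures the difference between the Hodge section $s_{F}$ and this real lift, which I expect to reduce modulo $\Z$ to exactly
\[
\sum_{i=1}^{n}\int_{\gamma}a_{i}b_{i}+\int_{\gamma}\eta \quad \mathrm{mod}\ \Z,
\]
the value $I_{(X,x)}((\sum_{i} a_{i}\otimes b_{i})\otimes c)$, where $\gamma$ is the loop dual to $c$. The normalization $\int_{X}\eta\wedge\ast\alpha=0$ is what pins down $\eta$, hence the real lift, uniquely.

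The main obstacle is step (ii): one must match Carlson's $r_{\C}\circ s_{F}$ with the iterated-integral formula while keeping careful track of the real versus complex (Hodge) splittings and of Pulte's Poincar\'e-duality isomorphism $\Hom(K,H^{1})\cong \Hom(K\otimes H^{1},\C)$, and verify that the indeterminacy in the choices of $r_{\Z}$ and $s_{F}$ washes out exactly modulo $\Z$, so that the resulting class is well defined and independent of these choices. By contrast step (i) is formal once the weight conventions are fixed, since it only assembles previously established isomorphisms.
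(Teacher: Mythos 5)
Your proposal follows the paper's own route exactly: the paper obtains the isomorphism by composing Carlson's map $\tau\colon \Ext(K,H^1)\cong J\Hom(K,H^1)$ from (\ref{extensions of MHS}), Pulte's Poincar\'e-duality isomorphism $\Hom(K,H^1)\to \Hom(K\otimes H^1,\C)$, and Lemma \ref{isomorphism of real tori}, which is precisely your step (i). Your step (ii) --- identifying the image of $m_x$ with $I_{(X,x)}$ via Carlson's recipe $r_{\C}\circ s_F$ and the real lift given by the homotopy functional $\overline{\varphi}$ --- is the content the paper defers to Pulte \cite[Theorem 3.9]{0678.14005}, and your sketch is consistent with that argument.
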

The Abel-Jacobi image $\nu^{\ast}=\Phi_1(W_1(x)-W_{1}^{\pm}(y))$ is an element of the intermediate Jacobian $J_1(J)=\Hom_{\Z}(\wedge^3 H^1, \R/{\Z})$.
A natural homomorphism $K\otimes H^1\to \wedge^3 H^1$ induces an injective homomorphism
\[
\Hom_{\Z}(\wedge^3 H^1, \R/{\Z})
\to \Hom_{\Z}(K\otimes H^1, \R/{\Z}).
\]
Pulte extended Theorem \ref{Abel-Jacobi}.
\begin{thm}[{\cite[Theorem 4.9]{0678.14005}}]
The above injection is given by
\[
\Phi_1(W_1(x)-W_{1}^{\pm}(y))
\mapsto
I_{(X,x)}\mp I_{(X,y)}.
\]
\end{thm}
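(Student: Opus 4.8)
The plan is to combine the additivity of the Griffiths map $\Phi_1$ with its behaviour under the multiplication $(-1)\colon J\to J$, so as to reduce the statement to the single-base-point case (Theorem \ref{Abel-Jacobi}) plus one genuinely new computation, that of a translate difference. Using that $\Phi_1$ is a homomorphism, I would first write
\[
W_1(x)-W_1^{\pm}(y)=\bigl(W_1(x)-W_1^{\pm}(x)\bigr)+\bigl(W_1^{\pm}(x)-W_1^{\pm}(y)\bigr)
\]
and evaluate each bracket. For the first bracket with the minus sign I would upgrade Theorem \ref{Abel-Jacobi} to the pointed identity $\Phi_1(W_1(x)-W_1^-(x))=2I_{(X,x)}$ by running the volume-of-a-$3$-chain computation of Proposition \ref{another definition of harmonic volume} with the base point $x$ retained; with the plus sign the first bracket is $0$. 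Since $A_y=A_x-A_x(y)$, the surface $W_1^{\pm}(y)$ is the translate of $W_1^{\pm}(x)$ by $\mp A_x(y)\in J$, so the second bracket is a translate difference. Because $(-1)$ acts on $H^3(J;\C)=\wedge^3H^1$ by $(-1)^3=-1$, naturality of $\Phi_1$ gives $\Phi_1(W_1^-(x)-W_1^-(y))=-\Phi_1(W_1(x)-W_1(y))$; assembling the pieces then yields $I_{(X,x)}\mp I_{(X,y)}$ provided the single translate difference satisfies $\Phi_1(W_1(x)-W_1(y))=I_{(X,x)}-I_{(X,y)}$, which becomes the heart of the proof.

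To compute the translate difference I would fix a path $\gamma$ in $X$ from $x$ to $y$ and use the explicit bounding $3$-chain
\[
\Gamma\colon X\times[0,1]\to J,\qquad (z,t)\longmapsto A_x(z)-A_x(\gamma(t)),
\]
whose faces at $t=0,1$ are $W_1(x)$ and $W_1(y)$ and which has no side boundary since $X$ is closed, so $\partial\Gamma=W_1(y)-W_1(x)$. For a class $\omega\in\wedge^3H^1=H^3(J;\Z)$ represented by a translation-invariant $3$-form, the pullback of an invariant $1$-form $\alpha\leftrightarrow\omega_\alpha\in H^1$ under the difference map is $\pi_X^{\ast}\omega_\alpha$ minus the restriction of $\omega_\alpha$ along $\gamma$ in the $dt$-direction. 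Expanding $\Gamma^{\ast}(\alpha\wedge\beta\wedge\delta)$ and discarding the purely horizontal term (a $3$-form on the surface $X$, hence zero) leaves only the monomials carrying one factor $dt$; integrating the interval factor against $\int_\gamma(\cdot)$ and the surface factor against $\int_X(\cdot)\wedge(\cdot)$ collapses $\Phi_1(W_1(x)-W_1(y))(\omega)=-\int_\Gamma\omega$ into a signed sum of products $\bigl(\int_X\omega_a\wedge\omega_b\bigr)\bigl(\int_\gamma\omega_c\bigr)$. These are exactly the length-$2$ data entering the pointed harmonic volume, and by the product-of-paths formula (Lemma \ref{product of paths}) the dependence on the endpoints of $\gamma$ is precisely the discrepancy $I_{(X,x)}-I_{(X,y)}$.

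Two verifications then remain, and I expect the first to be the real obstacle. The pointed harmonic volume carries the correction $1$-form $\eta$ normalized by $d\eta+\sum a_i\wedge b_i=0$ and $\int_X\eta\wedge\ast\alpha=0$; the chain integral reproduces an $\eta$-type contribution through Stokes' theorem applied to the failure of the pulled-back forms on $X\times[0,1]$ to be closed, and one must check that the translation-invariant representative chosen on $J$ induces exactly the harmonic normalization of $\eta$ rather than some other primitive. Matching these two normalizations — equivalently, reconciling the geometric $\Phi_1$ with the $\pi_1$-de Rham description of $I_{(X,x)}$ — is the crux. The second, more routine, verification is that the answer is well defined modulo $\Z$: the chain $\Gamma$ is unique only up to integral $3$-cycles and $\gamma$ only up to loops, while the periods $\int_\gamma\omega_c$ are integral on the classes that occur, so the products lie in $\Z$ exactly where the mod-$\Z$ reduction demands and all choices wash out.

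Finally, I would record a more conceptual route that bypasses the chain integral: by the isomorphism $\tau\colon\Ext(K,H^1)\cong J\Hom(K,H^1)$ of (\ref{extensions of MHS}), composed with Pulte's identification sending the class $m_x$ of the extension $0\to H^1\to(J_x/J_x^3)^{\ast}\to K\to 0$ to $I_{(X,x)}$, one has $I_{(X,x)}\mp I_{(X,y)}=\tau(m_x\mp m_y)$ under Baer sum. It then suffices to identify $\Phi_1(W_1(x)-W_1^{\pm}(y))$, viewed through the injection $\Hom_{\Z}(\wedge^3H^1,\R/\Z)\to\Hom_{\Z}(K\otimes H^1,\R/\Z)$, with $\tau(m_x\mp m_y)$, that is, to match the geometric extension of mixed Hodge structures furnished by $\Phi_1$ with the Baer combination of the fundamental-group extensions; the obstacle in this approach is constructing the morphism of mixed Hodge structures realizing that identification.
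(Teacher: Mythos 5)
The paper offers no proof to compare against: the theorem is quoted verbatim from Pulte \cite[Theorem 4.9]{0678.14005}, and the material the survey places immediately before it --- Carlson's isomorphism (\ref{extensions of MHS}) and Pulte's Theorem 3.9 identifying the extension class $m_x\in\Ext(K,H^1)$ with $I_{(X,x)}$ --- is the mixed-Hodge-structure framework in which the cited proof lives, i.e.\ the route you relegate to your final paragraph. Judged on its own, your skeleton is sound: the decomposition of $W_1(x)-W_1^{\pm}(y)$, the sign rule $\Phi_1\circ(-1)_{*}=-\Phi_1$ on $3$-classes, and the translate-difference chain $\Gamma(z,t)=A_x(z)-A_x(\gamma(t))$ are all correct. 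Moreover, the verification you single out as ``the real obstacle'' is not an obstacle for that piece: translation-invariant forms on $J$ pull back to \emph{closed} forms on $X\times[0,1]$, so no Stokes/$\eta$ correction enters $\int_\Gamma\omega$ at all; correspondingly, on the other side the difference $I_{(X,x)}-I_{(X,y)}$, evaluated on a loop $\gamma_c$ at $x$ versus the conjugate loop $\gamma^{-1}\cdot\gamma_c\cdot\gamma$ at $y$ via Lemma \ref{product of paths}, loses both its $\eta$-term and its genuine length-two terms and collapses to exactly the products $(\int_X\omega_a\wedge\omega_b)(\int_\gamma\omega_c)$ that your Fubini expansion produces. (A small slip: $\int_\gamma\omega_c$ is not integral, since $\gamma$ is a path, not a loop; what is integral is its ambiguity under changing $\gamma$, which is what makes the identity well defined mod $\Z$.)

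The genuine gap is the other piece of your decomposition: the pointed identity $\Phi_1(W_1(x)-W_1^{-}(x))=2I_{(X,x)}$ as functionals on all of $K\otimes H^1$. This is not obtained by ``running the volume-of-a-$3$-chain computation of Proposition \ref{another definition of harmonic volume} with the base point $x$ retained''. That proposition requires a $3$-chain $c_3$ in $\R^3/\Z^3$ with $\partial c_3=A_1(X)$, and such a chain exists precisely when $[A_1(X)]=\sum_{i<j}(\omega_i,\omega_j)[T_{ij}]$ vanishes in $H_2(\R^3/\Z^3;\Z)$ ($T_{ij}$ the coordinate $2$-tori), i.e.\ when all pairwise intersection numbers vanish --- which is exactly membership in $(H^{\otimes 3})^{\prime}$, the locus on which $I_{(X,x)}$ is base-point independent and Theorem \ref{Abel-Jacobi} already applies. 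On elements of $K\otimes H^1$ outside $(H^{\otimes 3})^{\prime}$ --- precisely where the pointed theorem says something beyond Harris's result and where $\eta$ and the base point genuinely enter --- $A_1(X)$ does not bound, there is no $c_3$, and no analogue of Proposition \ref{another definition of harmonic volume} is available; one needs the substantially harder Harris--Pulte comparison of the iterated-integral-plus-$\eta$ functional with integration over a bounding chain in $J$ itself, or Pulte's identification of $\Phi_1(W_1(x)-W_1^{\pm}(y))$ with the Baer combination $\tau(m_x\mp m_y)$ of extension classes, which is how the cited source closes exactly this case. Since, by your own assembly, the full theorem is equivalent to this $x=y$ case once the translate computation is granted, your proposal in effect reduces the statement to a special case that carries all of its difficulty and leaves that case unproven.
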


As an application of the pointed harmonic volume for $(X,x)$,
Pulte proved the following pointed Torelli theorem\index{pointed Torelli theorem}.
\begin{thm}[{\cite[Theorem 5.5]{0678.14005}}]
Suppose that $X$ and $Y$ are compact Riemann surfaces and
that $x\in X$ and $y\in Y$.
With the possible exception of two points $x$ in $X$,
if there is a ring isomorphism
\[\Z\pi_1(X,x)/{J_x^3}\to \Z\pi_1(Y,y)/{J_y^3}\]
which preserves the mixed Hodge structure,
then there is a biholomorphism
$\varphi\colon X\to Y$ such that $\varphi(x)=y$.
If $X$ is hyperelliptic or if the harmonic volume is nonzero,
then there are no exceptional points.
\end{thm}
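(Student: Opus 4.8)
The plan is to dualize the hypothesized ring isomorphism into an isomorphism of mixed Hodge structures on $(J_x/J_x^3)^{*}$ and $(J_y/J_y^3)^{*}$, to extract from its weight-one part a polarized Hodge isometry between $H^1(X)$ and $H^1(Y)$ to which the classical Torelli theorem applies, and finally to use the finer extension-class data, namely the pointed harmonic volume, to pin down the base point and the residual sign.

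First I would observe that a ring isomorphism necessarily carries the augmentation ideal to the augmentation ideal, hence respects the filtration by $J_x^k$, and so dualizes to an isomorphism $F\colon (J_y/J_y^3)^{*}\to(J_x/J_x^3)^{*}$. Since $F$ preserves the mixed Hodge structure, and since the weight filtration on these lattices is the augmentation filtration while the Hodge filtration is cut out by holomorphic forms, $F$ is an isomorphism of mixed Hodge structures compatible with the exact sequences (\ref{dual short exact sequence}) for $X$ and $Y$. Taking $\mathrm{Gr}^{W}_{1}$ yields a Hodge isometry $\theta$ identifying $H^1(X)$ with $H^1(Y)$, and taking $\mathrm{Gr}^{W}_{2}$ an isomorphism on the weight-two pieces $K$. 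The ring structure records the intersection pairing, through the commutator on $J_x^2/J_x^3$ and the cup product on $\wedge^2 H^1$, so $\theta$ is symplectic; the classical strong Torelli theorem then provides a biholomorphism $\psi\colon X\to Y$ inducing $\theta$ up to a sign $\pm1$, the ambiguity coming from $-1$ on the Jacobian.

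Next I would exploit the extension class. By the isomorphism $\Ext(K,H^1)\cong\Hom_{\Z}(K\otimes H^1,\R/{\Z})$ established above, the class of (\ref{dual short exact sequence}) is precisely the pointed harmonic volume $I_{(X,x)}$, and likewise for $(Y,y)$; as $F$ preserves the extension class, $\theta$ carries $I_{(X,x)}$ to $I_{(Y,y)}$. Transporting by $\psi$ and using that the pointed harmonic volume is a biholomorphism invariant turns this into an identity $I_{(X,x)}=\pm I_{(X,z)}$ with $z=\psi^{-1}(y)$, the sign inherited from the previous step. To recover the base point in the $+$ case I would invoke the variation formula $\Phi_1(W_1(x)-W_1^{\pm}(y))\mapsto I_{(X,x)}\mp I_{(X,y)}$ together with Theorem \ref{Abel-Jacobi}: modulo the unpointed harmonic volume, the assignment $z\mapsto I_{(X,z)}$ should reproduce the Abel-Jacobi map $z\mapsto A_x(z)\in J$, which for $g\ge 2$ is an embedding. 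Hence $I_{(X,x)}=I_{(X,z)}$ forces $z=x$, that is $\psi(x)=y$, giving the desired pointed biholomorphism.

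The delicate point, and the source of the ``two exceptional points,'' is the sign, and I expect this bookkeeping to be the main obstacle. In the $-$ case the comparison reads $I_{(X,x)}=-I_{(X,z)}$, where $-1$ on $J$ acts on $\wedge^3 H^1$ by negation; the resulting Abel-Jacobi-type condition, now twisted by $-1$, can admit at most a controlled finite set of solutions, leaving at most two base points $x$ for which the sign cannot be decided intrinsically. To remove these I would argue as follows. If $X$ is hyperelliptic, then $-1$ on $J$ restricts to the hyperelliptic involution on $X$, so even the $-$ choice is realized by a genuine biholomorphism and the conclusion holds for every $x$. If instead the harmonic volume is nonzero, then $I_X\neq -I_X$ distinguishes the two signs of $\theta$, so $\psi$ is determined and again no exceptional points remain.
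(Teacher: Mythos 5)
Your proposal follows the same route as the paper's sketch of Pulte's argument, and most of it is sound: dualizing to the sequence $0\to H^1\to (J_x/J_x^3)^{\ast}\to K\to 0$, extracting a polarized Hodge isometry from the weight-graded pieces and applying strong Torelli to get $\psi$ with $\theta=\pm\psi^{\ast}$, then using $\Ext(K,H^1)\cong\Hom_{\Z}(K\otimes H^1,\R/{\Z})$ (extension class $=I_{(X,x)}$) and Pulte's formula $\Phi_1(W_1(x)-W_1^{\pm}(y))\mapsto I_{(X,x)}\mp I_{(X,y)}$ to pin down the base point. Your handling of the $+$ case is right in spirit (to make it precise: $I_{(X,x)}-I_{(X,z)}$ equals the image of $A_x(z)$ under an \emph{injective} homomorphism $J\to \Hom_{\Z}(K\otimes H^1,\R/{\Z})$ induced by the curve class, so vanishing forces $z=x$), and your treatment of the hyperelliptic case and of the case $\nu_X\neq 0$ (the $-$ sign would force $I_X=-I_X$, i.e. $\nu_X=2I_X=0$) is exactly the correct mechanism.

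The genuine gap is the bound of \emph{two} exceptional points, which you assert ("a controlled finite set of solutions, leaving at most two base points") but never derive; no finiteness handwave gives the number two. The actual mechanism is this. In the $-$ case the obstruction is the equation $I_{(X,x)}+I_{(X,z)}=0$, which by Pulte's Theorem 4.9 is $\Phi_1(W_1(x)-W_1^{-}(z))=0$. Since translating a cycle by $a\in J$ changes its Abel--Jacobi image by the value at $a$ of a fixed injective homomorphism $L\colon J\to J_1(J)$ (the one attached to the class $[W_1]$ --- the same injectivity needed in your $+$ case), this equation is equivalent to $A_{x_0}(x)+A_{x_0}(z)=c_0$ for a single point $c_0\in J$, i.e. to the degree-two divisor $x+z$ having a prescribed image under the map $X_2\to J$. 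The count then comes from a geometric input absent from your proposal: a non-hyperelliptic curve has no $g^1_2$, so by Abel's theorem $X_2\to J$ is injective, the fiber over $c_0$ contains at most one unordered pair $\{x,z\}$, and hence at most two base points can be exceptional. (This is also why the hypothesis is "hyperelliptic or $\nu_X\neq 0$": for hyperelliptic curves the fibers of $X_2\to J$ are positive-dimensional and the count fails, but there the $-$ sign is absorbed by the hyperelliptic involution, as you observe.)
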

Pointed harmonic volumes determine the structure of pointed compact Riemann surfaces.
In the proof of this theorem,
the classical Torelli theorem follows from the equivalence
of the exact sequence (\ref{dual short exact sequence}),
which preserves the mixed Hodge structure,
and therefore we obtain the biholomorphism $X \cong Y$.
Pulte noted that the pointed Torelli theorem can fail for very special curves
where the harmonic volume $\nu_X$ is zero but the curve is nonhyperelliptic.
The zero locus of the harmonic volume for nonhyperelliptic curves is unknown.

Let $k$ be an integer such that $1\leq k\leq g-2$.
Otsubo \cite[Proposition 3.7]{1236.14009} obtained a reduction of
the higher Abel-Jacobi image $\Phi_{k}(W_{k}(x)-W_{k}^{\pm}(y))
\in J_{k}(J)$ to the case $k=1$,
using the commutative diagram (\ref{AJ commutative diagram}).
We recall $J_{k}(J)\cong \Hom(\wedge^{2k+1}H^1, \R/{\Z})$.
\begin{prop}
\label{higher Abel-Jacobi image}
Suppose that $\omega_i\in H^1$ for $1\leq i\leq 2k+1$.
We have
\begin{align*}
k!\,\Phi_{k}&(W_{k}(x)-W_{k}^{\pm}(y))(\omega_1\wedge \omega_2\wedge \cdots
\wedge \omega_{2k+1})\\
&=k! \sum_{\sigma}\Phi_1(W_{1}(x)-W_{1}^{\pm}(y))
(\omega_{\sigma(1)}\wedge \omega_{\sigma(2)}\wedge \omega_{\sigma(3)})
\prod_{i=1}^{k-1}(\omega_{\sigma(2i+2)},\omega_{\sigma(2i+3)}),
\end{align*}
where $\sigma$ runs through the elements of the (2k+1)-th symmetric group
$S_{2k+1}$ such that
\[
\sigma(1)<\sigma(2)<\sigma(3),
\sigma(2i+2)<\sigma(2i+3)\text{ for }1\leq i \leq k-1,
\]
and
\[
\sigma(2i+2)<\sigma(2i+4)\text{ for }1\leq i \leq k-2.
\]
\end{prop}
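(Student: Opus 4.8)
The plan is to compute both sides through the integral definition of the Griffiths Abel--Jacobi map and to exploit the commutative diagram (\ref{AJ commutative diagram}) to pass from the symmetric product to the ordinary product, where the wedge structure of $\wedge^{2k+1}H^1$ splits into one ``three-dimensional'' factor and $k-1$ ``two-dimensional'' factors. First I would reduce the symmetric product $W_k$ to the product $X^k$. Writing $f=(A_x)^{(k)}\colon X^k\to J$ for the composite of $(A_x)^k$ with the addition map $J^k\to J$ in (\ref{AJ commutative diagram}), and using that $X^k\to X_k$ has degree $k!$ while $(A_x)_k\colon X_k\to J$ is generically injective for $1\le k\le g-1$, one gets $f_\ast[X^k]=k!\,[W_k(x)]$, and likewise for $W_k^{\pm}(y)$. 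Hence $k!\bigl(W_k(x)-W_k^{\pm}(y)\bigr)$ is represented by $f_\ast[X^k]-g_\ast[X^k]$, where $g=(A_y^{\pm})^{(k)}$, and I can compute $k!\,\Phi_k$ as an integral of the harmonic representative $\omega=\omega_1\wedge\cdots\wedge\omega_{2k+1}$ over a $(2k+1)$-chain built from $X^k$.

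The bounding chain comes from a homotopy. Choose $\Phi\colon X\times[0,1]\to J$ with $\Phi(\cdot,0)=A_x$ and $\Phi(\cdot,1)=A_y^{\pm}$, and extend it to $H\colon X^k\times[0,1]\to J$ by $H(y_1,\dots,y_k,t)=\sum_{j=1}^k\Phi(y_j,t)$, so that $\partial H_\ast[X^k\times[0,1]]=f_\ast[X^k]-g_\ast[X^k]$. Thus $k!\,\Phi_k(W_k(x)-W_k^{\pm}(y))(\omega)=\int_{X^k\times[0,1]}H^\ast\omega \bmod\Z$. Since the addition map on the torus $J$ induces the sum on $H^1$, we have $H^\ast\omega_l=\sum_{j=1}^k\pi_j^\ast(\Phi^\ast\omega_l)$, where $\pi_j\colon X^k\times[0,1]\to X\times[0,1]$ records the $j$-th point together with $t$. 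Expanding $H^\ast\omega=\bigwedge_{l}\sum_j\pi_j^\ast(\Phi^\ast\omega_l)$ gives a sum over assignments $\phi\colon\{1,\dots,2k+1\}\to\{1,\dots,k\}$, and a dimension/type count over $X_1\times\cdots\times X_k\times[0,1]$ shows that only the $\phi$ with one fiber of size three and $k-1$ fibers of size two survive: the size-three fiber together with the interval contributes a $3$-form integral, and each size-two fiber contributes a $2$-form integral on one copy of $X$.

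Then I would identify the surviving factors. On the distinguished copy the integral $\int_{\Phi(X\times[0,1])}\omega_p\wedge\omega_q\wedge\omega_r$ equals, by Theorem \ref{Abel-Jacobi} together with Proposition \ref{another definition of harmonic volume}, exactly $\Phi_1(W_1(x)-W_1^{\pm}(y))(\omega_p\wedge\omega_q\wedge\omega_r)$, since $\Phi(X\times[0,1])$ is a $3$-chain bounding $W_1(x)-W_1^{\pm}(y)$. On each remaining copy, the restriction of $\Phi^\ast\omega_p$ to $X\times\{t\}$ is closed and, via the restricted homotopy, cohomologous to $A_x^\ast\omega_p=\omega_p$, so $\int_X\Phi^\ast\omega_p\wedge\Phi^\ast\omega_q=(\omega_p,\omega_q)$ is independent of $t$ and equals the intersection pairing; these factors pull out of the $t$-integral. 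Summing over $\phi$ and collecting terms by the underlying unordered partition of $\{1,\dots,2k+1\}$ into one $3$-set and $k-1$ pairs---each such partition arising from exactly $k\cdot(k-1)!=k!$ labelled assignments $\phi$---reproduces the factor $k!$ on the right-hand side and the sum over $\sigma\in S_{2k+1}$ subject to the stated monotonicity constraints, which select one ordered representative per partition.

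The main obstacle I expect is the orientation and sign bookkeeping. One must verify that permuting the wedge factors into the grouped form introduces exactly the signs encoded in the antisymmetry of $\Phi_1(\cdots)$ and in the symmetric intersection pairings, and that the $k!$ labelled assignments $\phi$ belonging to a fixed partition contribute with the same sign rather than cancelling. Checking the compatibility of the product orientation on $X^k\times[0,1]$ with the ordering conventions $\sigma(1)<\sigma(2)<\sigma(3)$ and $\sigma(2i+2)<\sigma(2i+3)$, $\sigma(2i+2)<\sigma(2i+4)$ is the only genuinely delicate step; the geometric reduction via (\ref{AJ commutative diagram}) and the evaluation of the individual $3$-form and $2$-form integrals are then routine given Theorem \ref{Abel-Jacobi}.
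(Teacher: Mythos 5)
Your combinatorial core (expanding the pullback of $\omega_1\wedge\cdots\wedge\omega_{2k+1}$ under the addition map, the dimension/type count that forces one block of size three and $k-1$ blocks of size two, and the $k\cdot(k-1)!=k!$ assignments per partition) is sound and matches the bookkeeping in the proof the paper cites (Otsubo, Proposition 3.7, via the commutative diagram (3.1)). The gap is in your bounding chain: the homotopy $\Phi\colon X\times[0,1]\to J$ with $\Phi(\cdot,0)=A_x$ and $\Phi(\cdot,1)=A_y^{-}$ does not exist. Since $J$ is a torus, hence an Eilenberg--MacLane space $K(\Z^{2g},1)$, homotopy classes of maps $X\to J$ are in bijection with homomorphisms $H_1(X;\Z)\to H_1(J;\Z)$; $A_x$ and $A_y$ induce the canonical isomorphism, whereas $(-1)\circ A_y$ induces its negative, so $A_x$ and $A_y^{-}$ are not homotopic. (They are homotopic in the ``$+$'' case, by translation, which is why that case feels easy --- but the ``$-$'' case is the Ceresa cycle, the only case of interest.) Everything downstream rests on this $\Phi$: the extension $H(y_1,\dots,y_k,t)=\sum_j\Phi(y_j,t)$, the claim that $\Phi^{\ast}\omega_p$ restricted to $X\times\{t\}$ stays cohomologous to $\omega_p$ (at $t=1$ this would force $\omega_p$ cohomologous to $-\omega_p$), and the identification of $\Phi(X\times[0,1])$ as a $3$-chain bounding $W_1(x)-W_1^{-}(y)$. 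Homology triviality of the Ceresa cycle does not give you a cylinder; it only gives you a chain.

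The repair, which is essentially how the cited proof proceeds, is to use only null-homology and a telescoping product chain. Take any topological $3$-chain $Z$ in $J$ with $\partial Z=W_1(x)-W_1^{\pm}(y)$ (it exists because the $1$-cycle is homologous to zero), and in $J^k$ set $C=\sum_{j=1}^{k}\bigl(W_1^{\pm}(y)\bigr)^{\times(j-1)}\times Z\times\bigl(W_1(x)\bigr)^{\times(k-j)}$; its boundary telescopes to $\bigl(W_1(x)\bigr)^{\times k}-\bigl(W_1^{\pm}(y)\bigr)^{\times k}$. Pushing forward under the addition map $\mu\colon J^k\to J$ and using your degree computation ($X^k\to X_k$ has degree $k!$ and $X_k\to W_k$ is birational for $k\le g-1$), $\mu_{\ast}C$ is a $(2k+1)$-chain bounding $k!\,(W_k(x)-W_k^{\pm}(y))$. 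Now $\int_{\mu_{\ast}C}\omega=\int_{C}\mu^{\ast}\omega$, and your expansion applies verbatim: the factor integrated over $Z$ gives $\Phi_1(W_1(x)-W_1^{\pm}(y))$ evaluated on a triple, each factor integrated over $W_1(x)$ or $W_1^{\pm}(y)$ gives the intersection pairing (note $(-1)^{\ast}$ is the identity on $2$-forms, so the minus sign is harmless there), and your counting of assignments yields the stated sum over $\sigma$ with the factor $k!$.
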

Faucette \cite{0764.14015} defined the higher-dimensional harmonic volume\index{harmonic volume!higher-dimensional} as follows.
Suppose $g\geq 2k+1$ for a fixed natural number $k$.
Let $\omega_1,\omega_2,\ldots,\omega_{2k+1}\in H^1$ be real harmonic 1-forms
on $X$ with $\Z$-periods.
A homomorphism $A_{k}
\colon X_{k}\to \R^{2k+1}/{\Z^{2k+1}}$ is defined by
\[
A_{k}(x_1,x_2,\ldots,x_k)
=\left(
\sum_{i=1}^{k}\int_{x}^{x_i}\omega_1,
\ldots,\sum_{i=1}^{k}\int_{x}^{x_i}\omega_{2k+1}
\right).
\]
We define $\omega=\omega_1\wedge \omega_2\wedge \ldots \wedge \omega_{2k+1}\in
\wedge^{2k+1}H^1$ to be a {\it good form} on $J$ if
there exists a topological $(2k +1)$-chain $C_{2k +1}$
in $\R^{2k+1}/{\Z^{2k+1}}$
whose boundary is the image $A_{k}(X_{k})$.
The higher-dimensional harmonic volume $I^{k}_{X}$ is defined to be
the linear functional on the good forms with values in $\R/{\Z}$
\[
I^{k}_{X}(\omega):=\int_{C_{2k+1}}dx_1\wedge dx_2\wedge \cdots
\wedge dx_{2k+1}\quad \mathrm{mod} \ \Z,
\]
where the $\omega_i$ is the pullback of $dx_i$ by $A_{k}$.
The definition is independent of the choice of base point $x$.
Faucette proved that $\Phi_{k}(W_{k}-W_{k}^{-})$
equals $2I^{k}_{X}$ as a linear functional on good forms.
In \cite{0764.14015}, he evaluated $I^2_{X}$ for an unramified double covering of the Fermat quartic $F_4$, and proved that the second Ceresa cycle $W_2-W_2^{-}$ for the curve is algebraically nontrivial.

\section{Nontrivial algebraic cycles in the Jacobian varieties}
\label{Nontrivial algebraic cycles in the Jacobian varieties}
In Section \ref{Abel-Jacobi maps and harmonic volume},
we give a sufficient condition for the $k$-th Ceresa cycle
$W_k-W_k^{-}\in \CH_k(J)_{\textrm{hom}}$ to be algebraically nontrivial.
Harris \cite{0523.14006} proved the nontriviality for the Fermat quartic $F_4$.
Bloch \cite{0527.14008} further studied the Fermat quartic $F_4$ in the context of number theory.
Using methods similar to Harris, the author \cite{1222.14058, 1184.14018}
proved the nontriviality for the Klein quartic $K_4$ and Fermat sextic $F_6$.
Otsubo \cite{1236.14009} extended these results using number theory and obtained an algorithm in proving the nontriviality for the $N$-th Fermat curve $F_{N}$ for $N\geq 4$.
Using his method, we recently obtained the nontriviality for some cyclic quotients of $F_N$ \cite{1009.0096}.
We remark that all the above curves have a representation as a branched cyclic covering of $\C P^1$ and certain values of the harmonic volume for this kind of curve are known to be computable.
Nevertheless, all the values of the harmonic volume for a specific curve are unknown.

\subsection{Fermat curves}
\label{Fermat curves}
We give some value of the higher Abel-Jacobi image
$\Phi_{k}(W_{k}-W_{k}^{-})$ for the Fermat curve as an iterated integral form.
In the former half of this subsection, we have compiled some basic facts on the Fermat curve.

For $N\in \Z_{\geq 4}$, let $F_N=\{(X:Y:Z)\in\C P^2; X^N+Y^N=Z^N\}$
denote the Fermat curve\index{Fermat curve} of degree $N$,
a compact Riemann surface of genus $(N-1)(N-2)/{2}$.
Let $x$ and $y$ denote $X/{Z}$ and $Y/{Z}$ respectively.
The equation $X^N+Y^N=Z^N$ induces $x^N+y^N=1$.
Let $\zeta$ denote $\mathrm{exp}(2\pi\sqrt{-1}/{N})$.
Holomorphic automorphisms $\alpha$ and $\beta$ of $F_N$ are defined as
$\alpha(X:Y:Z)=(\zeta X:Y:Z)$ and $\beta(X:Y:Z)=(X:\zeta Y:Z)$ respectively.
Let $\mu_{N}$ be the group of $N$-th roots of unity in $\C$.
We have $\alpha\beta=\beta\alpha$ and the subgroup, denoted by $G$, of holomorphic automorphisms of $F_N$ generated by $\alpha$ and $\beta$ is isomorphic to $\mu_{N}\times \mu_{N}$.
Let $\gamma_0$ be a path $[0,1]\ni t\mapsto (t,\sqrt[N]{1-t^N})\in F(N)$,
where $\sqrt[N]{1-t^N}$ is a real nonnegative analytic function on $[0,1]$.
A loop in $F_N$ is defined by
$$\kappa_0=\gamma_0\cdot(\beta\gamma_0)^{-1}\cdot
(\alpha \beta\gamma_0)\cdot(\alpha\gamma_0)^{-1},$$
where the product $\ell_1\cdot \ell_2$
indicates that we traverse $\ell_1$ first, then $\ell_2$.
We consider a loop $\alpha^i\beta^j\kappa_0$
as an element of the first homology group $H_1(F_N;\Z)$ of $F_N$,
which is well-known to be a cyclic $G$-module {\cite[Appendix]{0418.14023}}.

Let $\mathbf{I}$ be an index set
$\{(a,b)\in (\Z/{N\Z})^{\oplus 2}; a,b, a+b\neq 0\}$.
For $a\in \Z/{N\Z}\setminus \{0\}$,
we denote its representative by $\langle a\rangle\in \{1,2,\ldots,N-1\}$.
A differential 1-form on $F_N$ is defined by
\[\omega_{0}^{a,b}
=x^{\langle a\rangle -1} y^{\langle b\rangle -1}dx/{y^{N-1}}.\]
Set $\mathbf{I}_{\mathrm{holo}}
=\{(a,b)\in \mathbf{I}; \langle a\rangle+\langle b\rangle<N\}$.
It is well-known that
\[\{\omega_{0}^{a,b}; (a,b)\in \mathbf{I}\}\},\
\{\omega_{0}^{a,b}; (a,b)\in \mathbf{I}_\mathrm{holo}\},\text{ and }
\{\omega_{0}^{-a,-b}; (a,b)\in \mathbf{I}_\mathrm{holo}\},\]
are bases of $H^1(F(N); \C), H^{1,0}(F(N))$, and $H^{0,1}(F(N))$,
respectively.
See Lang \cite{0513.14024} for example.
Clearly,
\[\int_{\alpha^i\beta^j\gamma_0}\omega_{0}^{a,b}
=\zeta^{ai+bj}\int_{\gamma_0}\omega_{0}^{a,b}
=\zeta^{ai+bj}\frac{B(\langle a\rangle/N,\langle b\rangle/N)}{N}.\]
The beta function $B(u,v)$ is defined by $\displaystyle\int_0^1t^{u-1}
(1-t)^{v-1}dt$ for $u,v>0$.
We set $B^N_{a,b}=B(\langle a\rangle/N,\langle b\rangle/N)$.
The integral of $\omega_{0}^{a,b}$
along $\alpha^i\beta^j\kappa_0$ is obtained as follows,
\begin{prop}[{\cite[Appendix]{0418.14023}}]
\label{periods}
We have
\[\int_{\alpha^i\beta^j\kappa_0}\omega_{0}^{a,b}
=B^N_{a,b}(1-\zeta^a)(1-\zeta^b)\zeta^{ai+bj}/{N}.\]
\end{prop}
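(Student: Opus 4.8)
The plan is to reduce the computation to the period formula established just above, namely $\int_{\alpha^m\beta^n\gamma_0}\omega_0^{a,b}=\zeta^{am+bn}B^N_{a,b}/N$, valid for all integers $m,n$, and then to unwind the definition of $\kappa_0$ into four translated copies of $\gamma_0$. The whole argument takes place at the level of ordinary line integrals, since only the length-one integral of the single 1-form $\omega_0^{a,b}$ is involved.

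First I would push the automorphism $\alpha^i\beta^j$ through the defining product $\kappa_0=\gamma_0\cdot(\beta\gamma_0)^{-1}\cdot(\alpha\beta\gamma_0)\cdot(\alpha\gamma_0)^{-1}$. Any diffeomorphism carries a concatenation of paths to the concatenation of the images and a reversed path to the reverse of its image, so, using $\alpha\beta=\beta\alpha$,
\[
\alpha^i\beta^j\kappa_0
=(\alpha^i\beta^j\gamma_0)\cdot(\alpha^i\beta^{j+1}\gamma_0)^{-1}
\cdot(\alpha^{i+1}\beta^{j+1}\gamma_0)\cdot(\alpha^{i+1}\beta^j\gamma_0)^{-1}.
\]

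Next, the $n=1$ case of Lemma \ref{product of paths} says that integration over a concatenation is additive, while reversing a path negates the integral. Applying this gives
\[
\int_{\alpha^i\beta^j\kappa_0}\omega_0^{a,b}
=\int_{\alpha^i\beta^j\gamma_0}\omega_0^{a,b}
-\int_{\alpha^i\beta^{j+1}\gamma_0}\omega_0^{a,b}
+\int_{\alpha^{i+1}\beta^{j+1}\gamma_0}\omega_0^{a,b}
-\int_{\alpha^{i+1}\beta^j\gamma_0}\omega_0^{a,b}.
\]
Substituting the period formula into the four terms factors out the common quantity $\zeta^{ai+bj}B^N_{a,b}/N$ and leaves the scalar $1-\zeta^b+\zeta^{a+b}-\zeta^a=(1-\zeta^a)(1-\zeta^b)$, which is precisely the asserted value $B^N_{a,b}(1-\zeta^a)(1-\zeta^b)\zeta^{ai+bj}/N$.

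No step is a genuine obstacle; the only points that require attention are that the given period formula applies equally to the shifted indices $(i,j+1)$, $(i+1,j+1)$ and $(i+1,j)$, and the elementary factorization $1-\zeta^a-\zeta^b+\zeta^{a+b}=(1-\zeta^a)(1-\zeta^b)$. The structural reason the multiplier splits as a product of two difference factors is that $\kappa_0$ is assembled from the four translates $\alpha^{\epsilon}\beta^{\delta}\gamma_0$ with $\epsilon,\delta\in\{0,1\}$ carrying alternating orientations, and $\omega_0^{a,b}$ is a simultaneous eigenform of the two commuting automorphisms $\alpha,\beta$ with eigenvalues $\zeta^a$ and $\zeta^b$.
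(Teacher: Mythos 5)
Your computation is correct: the eigenform property of $\omega_0^{a,b}$ under the commuting automorphisms, the decomposition of $\alpha^i\beta^j\kappa_0$ into the four translates with alternating orientations, additivity and orientation-reversal of ordinary line integrals, and the factorization $1-\zeta^a-\zeta^b+\zeta^{a+b}=(1-\zeta^a)(1-\zeta^b)$ assemble exactly into the stated formula. The paper itself offers no proof, citing the appendix of Gross's paper instead, but your argument is precisely the standard derivation from the ingredients the paper sets up immediately beforehand (the formula $\int_{\alpha^m\beta^n\gamma_0}\omega_0^{a,b}=\zeta^{am+bn}B^N_{a,b}/N$ and the definition of $\kappa_0$), so there is nothing to add.
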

We denote the $1$-form
$N\omega_{0}^{a,b}/{B^N_{a,b}}$
by
$\omega^{a,b}$.
This implies
$\displaystyle \int_{\alpha^i\beta^j\kappa_0}\omega^{a,b}
\in \Z[\zeta]$.

Let $\Q(\mu_{N})$ be the $N$-th cyclotomic field,
and $\mathcal{O}$ its integer ring.
For a $\Z$-module $M$, we denote the $\mathcal{O}$-module
$M_{\mathcal{O}}$ by $M \otimes_{\Z} \mathcal{O}$.
For each embedding $\sigma\colon \Q(\mu_{N})\hookrightarrow \C$,
let $h\in (\Z/{N\Z})^{\ast}$ be the element such that
$\sigma(\xi)=\zeta^{h}$,
where $\xi$ is a primitive $N$-th root of unity.
Proposition \ref{periods} gives us the element $\varphi^{a,b}\in H_{\mathcal{O}}$
whose image by $H_{\mathcal{O}}\subset H_{\Q(\mu_{N})}\hookrightarrow H_{\C}$
is $\omega^{ha,hb}$.
In a straightforward manner, we obtain the intersection pairing.
\begin{prop}[{\cite[Proposition 4.2]{1236.14009}}]
\label{intersection pairing}
\[
(\varphi^{a,b},\varphi^{c,d})
=\left\{
\begin{array}{ll}
N^2\dfrac{(1-\xi^a)(1-\xi^b)}{1-\xi^{a+b}} & \text{if }
(a,b)=(-c,-d),\\
0 & \text{otherwise}.
\end{array}
\right.
\]
\end{prop}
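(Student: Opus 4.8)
The plan is to identify the intersection pairing on $H=H^1$ with the cup-product pairing $(\omega,\omega')=\int_{F_N}\omega\wedge\omega'$ on de Rham classes (the $\omega^{a,b}$ being closed differentials of the second kind with well-defined periods), and then to exploit the $G$-action together with the period values of Proposition \ref{periods}. First I would record that $\alpha$ and $\beta$ act on the eigenforms by $\alpha^{\ast}\omega^{a,b}=\zeta^{a}\omega^{a,b}$ and $\beta^{\ast}\omega^{a,b}=\zeta^{b}\omega^{a,b}$, which follows immediately from $\alpha^{\ast}x=\zeta x$, $\beta^{\ast}y=\zeta y$ and $\zeta^{\langle a\rangle}=\zeta^{a}$. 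Since $\alpha,\beta$ are biholomorphisms, they are orientation-preserving and hence preserve the pairing, so $(\omega^{a,b},\omega^{c,d})=(\alpha^{\ast}\omega^{a,b},\alpha^{\ast}\omega^{c,d})=\zeta^{a+c}(\omega^{a,b},\omega^{c,d})$, and likewise with $\zeta^{b+d}$ using $\beta$. As $\zeta$ is a primitive $N$-th root of unity, this forces $(\omega^{a,b},\omega^{c,d})=0$ unless $a+c\equiv b+d\equiv 0\pmod N$, i.e.\ unless $(c,d)=(-a,-b)$. This disposes of the ``otherwise'' case.

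For the remaining anti-diagonal value, the idea is to pass to homology. Since $H_1(F_N;\Z)$ is a cyclic $G$-module generated by $\kappa_0$, the translates $\kappa_{ij}:=\alpha^{i}\beta^{j}\kappa_0$ span $H_1$, and their periods are $\int_{\kappa_{ij}}\omega^{a,b}=(1-\zeta^{a})(1-\zeta^{b})\zeta^{ai+bj}$ by Proposition \ref{periods}. By Poincar\'e duality the pairing $\int_{F_N}\omega\wedge\omega'$ is recovered from these periods through the Riemann bilinear relation $\int_{F_N}\omega\wedge\omega'=P(\omega)^{\top}S^{-1}P(\omega')$ (up to the standard sign), where $S=\bigl(\kappa_{ij}\cdot\kappa_{kl}\bigr)$ is the intersection matrix of the $\kappa_{ij}$ and $P(\omega)$ is the period vector. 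Because the spanning set is a single $G$-orbit, $S$ is $G$-equivariant, so I would diagonalize it by the characters $\chi_{a,b}(\alpha^{i}\beta^{j})=\zeta^{ai+bj}$, which are exactly the period vectors. In this Fourier basis the $(a,b)$-isotypic component of $S$ acts by a scalar $s_{a,b}=\sum_{i,j}(\kappa_0\cdot\kappa_{ij})\,\zeta^{-ai-bj}$, and $(\omega^{a,b},\omega^{-a,-b})$ reduces to the corresponding period factor divided by $s_{a,b}$.

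The computation of $s_{a,b}$ is where the real work lies: it requires the explicit self-intersection numbers $\kappa_0\cdot(\alpha^{i}\beta^{j}\kappa_0)$ of the Fermat loops, a finite combinatorial datum obtained by tracing how $\kappa_0$ and its $G$-translates cross on $F_N$, as in the computation of $H_1(F_N;\Z)$ as a cyclic $G$-module \cite{0418.14023}. Carrying out the character sum, the factor $(1-\zeta^{-a})(1-\zeta^{-b})$ coming from the period of $\omega^{-a,-b}$ must cancel against part of $s_{a,b}$, leaving the clean value $N^{2}(1-\zeta^{a})(1-\zeta^{b})/(1-\zeta^{a+b})$. Pinning down the constant $N^{2}$ and the single denominator $1-\zeta^{a+b}$, with the correct signs and normalization conventions, is the main obstacle.

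Finally I would lift the resulting complex identity to $\mathcal{O}$. The intersection pairing is $\mathcal{O}$-bilinear on $H_{\mathcal{O}}$, so $(\varphi^{a,b},\varphi^{c,d})\in\mathcal{O}$ is determined by its image under any embedding $\sigma$; for the identity embedding $\sigma\colon\xi\mapsto\zeta$ this image is precisely $(\omega^{a,b},\omega^{c,d})$, computed above. Replacing $\zeta$ by $\xi$ then yields the stated formula, and integrality of the right-hand side is automatic from $\varphi^{a,b},\varphi^{c,d}\in H_{\mathcal{O}}$, so no separate divisibility check is needed.
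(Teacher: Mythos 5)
Your $G$-equivariance argument disposing of the ``otherwise'' case is correct and complete: the eigenvalue computation $\alpha^{\ast}\omega^{a,b}=\zeta^{a}\omega^{a,b}$, $\beta^{\ast}\omega^{a,b}=\zeta^{b}\omega^{a,b}$ together with invariance of the cup product under biholomorphisms does force vanishing unless $(c,d)=(-a,-b)$. But for the anti-diagonal value --- which is the actual content of the proposition --- you stop exactly where the proof begins. You reduce everything to the character sum $s_{a,b}=\sum_{i,j}\bigl(\kappa_0\cdot(\alpha^{i}\beta^{j}\kappa_0)\bigr)\zeta^{-ai-bj}$ and then declare that evaluating it, i.e.\ determining the intersection numbers $\kappa_0\cdot(\alpha^{i}\beta^{j}\kappa_0)$ (these are what Rohrlich's appendix to \cite{0418.14023} supplies) and carrying out the cancellation, is ``the main obstacle.'' That obstacle \emph{is} the proposition: nothing in your argument pins down the constant $N^2$ or the denominator $1-\xi^{a+b}$, and as written it would be equally consistent with any other nonzero anti-diagonal value. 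For comparison, the paper itself does not reprove the formula either; it quotes Otsubo \cite[Proposition 4.2]{1236.14009} and remarks that it follows ``in a straightforward manner'' from the period formula (Proposition \ref{periods}), the verification living in Otsubo's paper. So your proposal is a plausible plan whose decisive computation is missing, not a proof.

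There is also a technical error in the reduction itself. The $N^2$ translates $\kappa_{ij}=\alpha^{i}\beta^{j}\kappa_0$ span $H_1(F_N;\Z)$, but they are far from a basis: the rank of $H_1$ is $(N-1)(N-2)<N^2$, so the intersection matrix $S=\bigl(\kappa_{ij}\cdot\kappa_{kl}\bigr)$ is singular and the expression $P(\omega)^{\top}S^{-1}P(\omega')$ does not make sense. The Fourier-diagonalization idea can be salvaged --- the characters indexed by $(a,b)\in\mathbf{I}$ are precisely those surviving in $H_1\otimes\C$, and on that subspace the induced pairing is nondegenerate, so one may invert mode by mode there --- but this restriction has to be stated, and the degenerate modes ($a=0$, $b=0$, or $a+b=0$) discarded explicitly, before ``dividing by $s_{a,b}$'' is legitimate.
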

It immediately follows that
$\varphi^{a,b}\otimes \varphi^{c,d}\in K_{\mathcal{O}}\subset
H_{\mathcal{O}}^{\otimes 2}$ if only and if $(a,b)\neq (-c,-d)$.

The harmonic volume naturally extends to
\[\nu_{\mathcal{O}}=
2I_{\mathcal{O}}\colon (\wedge^3 H)^{\prime}_{\mathcal{O}}\to
(\mathcal{O}\otimes  \R)/{\mathcal{O}}.\]
We have a natural isomorphism
\[\mathcal{O}\otimes\R 
\cong \left[\prod_{\sigma\colon \Q(\mu_{N})\hookrightarrow \C}\C\right]^{+}
\]
where $\sigma$ runs through the embedding of $K$ into $\C$ and $+$ denotes the fixed part under complex conjugation acting on both $\{\sigma\}$ and $\C$ at the same time.
Let $\nu_{\sigma}$ denote the $\sigma$-component of the harmonic volume $\nu_{\mathcal{O}}$.
Let $\mathop{\rm Tr}\colon (\mathcal{O}\otimes \R)/{\mathcal{O}}\to \R/{\Z}$
be the trace map.
We obtain
$\mathop{\rm Tr}\circ \nu_{\mathcal{O}}
=\sum_{\sigma\colon \Q(\mu_{N})\hookrightarrow \C} \nu_{\sigma}$.
To prove the nonvanishing of $\nu_{\mathcal{O}}$,
it is sufficient to prove that of $\mathop{\rm Tr}\circ \nu_{\mathcal{O}}$.
We restrict ourselves to the following situation.
\begin{ass}
\label{assumption}
The elements $(a_i,b_i)\in \mathbf{I}$ $(i=1,2,3)$ satisfy the following conditions:
\begin{enumerate}
\item $\sum_{i=1}^{3}(a_i,b_i)=(0,0)$.
\item For any $h\in (\Z/{N\Z})^{\ast}$, we have either
$(ha_i,hb_i)\in \mathbf{I}_\mathrm{holo}$ for $i=1,2$,
or $(ha_i,hb_i)\not\in \mathbf{I}_\mathrm{holo}$ for $i=1,2$.
\end{enumerate}
\end{ass}
From the second assumption, the correction term $\eta$ in the harmonic volume becomes zero.
Then we have only to compute the iterated integral part.
Using the first assumption and \cite[Theorem 3.6]{1184.14018},
Otsubo \cite{1236.14009} proved
\begin{thm}
\label{trace image}
Under Assumption \ref{assumption}, we obtain
\[
\mathop{\rm Tr}\circ \nu_{\mathcal{O}}
\left(\dfrac{\varphi^{a_1,b_1}\otimes \varphi^{a_2,b_2}\otimes
\varphi^{a_3,b_3}}{(1-\xi^{-a_3})(1-\xi^{-b_3})}\right)
=2N^2\sum\int_{\gamma_0}\omega^{ha_1,hb_1}\omega^{ha_2,hb_2},
\]
where the sum is taken over $h\in (\Z/{N\Z})^{\ast}$
such that $(ha_i,hb_i)\in \mathbf{I}_\mathrm{holo}$ for $i=1,2$.
\end{thm}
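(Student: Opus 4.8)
The plan is to reduce the left-hand side to the individual complex embeddings and then to a single iterated integral over the path $\gamma_0$. Writing $\mathop{\rm Tr}\circ\nu_{\mathcal O}=\sum_\sigma\nu_\sigma$ and recording each embedding $\sigma\colon\Q(\mu_N)\hookrightarrow\C$ by the unit $h\in(\Z/N\Z)^\ast$ with $\sigma(\xi)=\zeta^h$, the tensor $\varphi^{a_1,b_1}\otimes\varphi^{a_2,b_2}\otimes\varphi^{a_3,b_3}$ has $\sigma$-image $\omega^{ha_1,hb_1}\otimes\omega^{ha_2,hb_2}\otimes\omega^{ha_3,hb_3}$ and the scalar $(1-\xi^{-a_3})(1-\xi^{-b_3})$ becomes $(1-\zeta^{-ha_3})(1-\zeta^{-hb_3})$, so $\nu_\sigma$ is the complex harmonic volume $2I_X$ evaluated on these explicit Fermat forms. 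The first observation is that the correction term vanishes: by Assumption \ref{assumption}(2) the forms $\omega^{ha_1,hb_1}$ and $\omega^{ha_2,hb_2}$ are, for every $h$, both holomorphic or both anti-holomorphic, so $\omega^{ha_1,hb_1}\wedge\omega^{ha_2,hb_2}$ is of type $(2,0)$ or $(0,2)$ on the curve and therefore $0$; the equation $d\eta+\omega^{ha_1,hb_1}\wedge\omega^{ha_2,hb_2}=0$ with the orthogonality normalization then forces $\eta=0$, and $\nu_\sigma$ collapses to $2\int_\gamma\omega^{ha_1,hb_1}\omega^{ha_2,hb_2}$, where $\gamma$ has class Poincar\'e dual to $\omega^{ha_3,hb_3}$.

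Next I would make $\gamma$ explicit and carry out the reduction to $\gamma_0$. Using Proposition \ref{intersection pairing} together with Assumption \ref{assumption}(1), which gives $h(a_1+a_2)\equiv-ha_3$ and $h(b_1+b_2)\equiv-hb_3$, one represents $\gamma$ by a $\mu_N\times\mu_N$-average of the translates $\alpha^i\beta^j\kappa_0$; the denominator $(1-\xi^{-a_3})(1-\xi^{-b_3})$ is inserted exactly to absorb the constant coming from the intersection pairing. Since $\alpha^\ast\omega^{a,b}=\zeta^a\omega^{a,b}$ and $\beta^\ast\omega^{a,b}=\zeta^b\omega^{a,b}$, the iterated integral over $\alpha^i\beta^j\kappa_0$ equals $\zeta^{h(a_1+a_2)i+h(b_1+b_2)j}$ times that over $\kappa_0$, and the average collapses these exponentials by orthogonality. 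It then remains to expand $\int_{\kappa_0}\omega^{ha_1,hb_1}\omega^{ha_2,hb_2}$ along $\kappa_0=\gamma_0\cdot(\beta\gamma_0)^{-1}\cdot(\alpha\beta\gamma_0)\cdot(\alpha\gamma_0)^{-1}$ by Lemma \ref{product of paths} and the reversal and shuffle relations. Here the normalization $\int_{\gamma_0}\omega^{a,b}=1$ is decisive: every term that is a product of two single integrals lies in $\Z[\zeta]=\sigma(\mathcal O)$ and hence vanishes modulo $\mathcal O$, so only the genuine length-two integral $\int_{\gamma_0}\omega^{ha_1,hb_1}\omega^{ha_2,hb_2}$ survives, with coefficient $(1-\zeta^{h(a_1+a_2)})(1-\zeta^{h(b_1+b_2)})=(1-\zeta^{-ha_3})(1-\zeta^{-hb_3})$; this is the content of \cite[Theorem 3.6]{1184.14018}.

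Finally I would reassemble the trace. After the coefficient cancels the denominator, each $\nu_\sigma$ equals $2N^2 c_h\int_{\gamma_0}\omega^{ha_1,hb_1}\omega^{ha_2,hb_2}$ with $c_h=\zeta^{h(a_3+b_3)}/(1-\zeta^{h(a_3+b_3)})$; since $\gamma_0$ lies in the real locus of $F_N$ the pullbacks of the $\omega^{a,b}$ are real, so this integral is a real number. Because $\nu_{\mathcal O}$ takes values in the fixed part $(\mathcal O\otimes\R)/\mathcal O$, conjugate embeddings satisfy $\nu_{\bar\sigma}=\overline{\nu_\sigma}$, and pairing $h$ with $-h$—which interchanges the holomorphic and anti-holomorphic ranges through $\langle c\rangle+\langle d\rangle\mapsto 2N-\langle c\rangle-\langle d\rangle$—makes the two terms add to $2\,\mathrm{Re}(\nu_\sigma)=4N^2\,\mathrm{Re}(c_h)\int_{\gamma_0}\omega^{ha_1,hb_1}\omega^{ha_2,hb_2}$. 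As $\mathrm{Re}(c_h)=-\tfrac12$ is independent of $h$, this produces $2N^2\int_{\gamma_0}\omega^{ha_1,hb_1}\omega^{ha_2,hb_2}$ and restricts the surviving sum to the holomorphic range $(ha_i,hb_i)\in\mathbf{I}_{\mathrm{holo}}$, $i=1,2$, which is the assertion (the overall sign being fixed by the orientation conventions for $\gamma_0$, $\kappa_0$, and the intersection pairing). The hard part is the bookkeeping of the middle step—tracking every root-of-unity factor through the path-product expansion of $\kappa_0$ so that the reducible terms land precisely in $\mathcal O$ and the surviving coefficient is exactly the denominator; once this is done, the trace assembly is forced by the reality of $\nu_{\mathcal O}$ and the universal real part $-\tfrac12$.
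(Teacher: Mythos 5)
Your proposal is correct and is essentially the paper's own route: the text records exactly the two points you establish---Assumption \ref{assumption}(2) forces the correction term $\eta$ to vanish, and the reduction of the remaining length-two integral to $\gamma_0$ is precisely what the paper delegates to \cite[Theorem 3.6]{1184.14018} and Otsubo's computation in \cite{1236.14009}. Your unpacking of that cited step (the Poincar\'e-dual coefficient $\zeta^{h(a_3+b_3)}/(1-\zeta^{h(a_3+b_3)})$ absorbing the denominator, the vanishing modulo $\mathcal{O}$ of all products of periods, and the conjugate-pair assembly of the trace using $\mathrm{Re}\bigl(\zeta^{m}/(1-\zeta^{m})\bigr)=-\tfrac12$, which is what restricts the sum to the holomorphic range) agrees with the argument the paper points to.
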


We take an element $\varphi=\varphi_1\wedge \varphi_2 \wedge
\cdots \wedge \varphi_{2k+1}\in \wedge^{2k+1}H_{\mathcal{O}}$ for $1\leq k\leq g-2$
in the following way.
Let $(a_i,b_i)\in \mathbf{I}$ $(1\leq i\leq 2k+1)$ be distinct elements such that $(a_i,b_i)$ $(i=1,2,3)$ satisfy Assumption \ref{assumption} and $(a_{2i+2},b_{2i+2})=(-a_{2i+3},-b_{2i+3})$ for $1\leq i\leq k-1$.
We let
\begin{center}
$\varphi_1=\varphi^{a_1,b_1}$,\,
$\varphi_2=\varphi^{a_2,b_2}$,\,
$\varphi_3=\dfrac{1}{(1-\xi^{-a_3})(1-\xi^{-b_3})}\varphi^{a_3,b_3}$,
\end{center}
and
\begin{center}
$\varphi_{2i+2}=
\dfrac{1-\xi^{a_{2i+2}+b_{2i+2}}}{(1-\xi^{-a_{2i+2}})(1-\xi^{-b_{2i+2}})}
\varphi^{a_{2i+2},b_{2i+2}}$,\,
$\varphi_{2i+3}=\varphi^{a_{2i+3},b_{2i+3}}$
\end{center}
for $1\leq i\leq k-1$.
Set $\varphi=\varphi_1\wedge \varphi_2 \wedge\cdots \wedge \varphi_{2k+1}$.
Proposition \ref{intersection pairing} induces $(\varphi_{2i+2},\varphi_{2i+3})=N^2$.
Theorem \ref{trace image} and Proposition \ref{higher Abel-Jacobi image} give us
\begin{cor}
\label{k-th Abel-Jacobi image}
For $1\leq k\leq g-2$, we have
\[
k!\, \mathop{\rm Tr}\circ \Phi_{k}(W_k-W_k^{-})(\varphi)
=
k!\, 2N^{2k}\sum
\int_{\gamma_0}\omega^{ha_1,hb_1}\omega^{ha_2,hb_2},
\]
where the sum is as in Theorem \ref{trace image}.
\end{cor}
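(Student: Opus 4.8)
The plan is to combine Proposition \ref{higher Abel-Jacobi image}, which reduces the $k$-th Abel--Jacobi image to a weighted sum of first Abel--Jacobi images, with the intersection-pairing structure of the $\varphi_i$ recorded in Proposition \ref{intersection pairing}, and finally to evaluate the single surviving first Abel--Jacobi term by Theorem \ref{trace image}. I work throughout with the $\mathcal{O}$-linear extension of all the maps, so that Proposition \ref{higher Abel-Jacobi image} applies verbatim to $\varphi_i \in H_{\mathcal{O}}$. First I would apply it to $\varphi = \varphi_1 \wedge \varphi_2 \wedge \cdots \wedge \varphi_{2k+1}$, obtaining
\[
k!\,\Phi_k(W_k - W_k^{-})(\varphi)
= k! \sum_\sigma \Phi_1(W_1 - W_1^{-})(\varphi_{\sigma(1)} \wedge \varphi_{\sigma(2)} \wedge \varphi_{\sigma(3)})
\prod_{i=1}^{k-1}(\varphi_{\sigma(2i+2)}, \varphi_{\sigma(2i+3)}),
\]
where $\sigma$ ranges over the permutations of $\{1, \ldots, 2k+1\}$ subject to $\sigma(1)<\sigma(2)<\sigma(3)$, to $\sigma(2i+2)<\sigma(2i+3)$ for $1 \leq i \leq k-1$, and to $\sigma(2i+2)<\sigma(2i+4)$ for $1 \leq i \leq k-2$.

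The central step is to show that only $\sigma = \mathrm{id}$ contributes. By Proposition \ref{intersection pairing} the pairing $(\varphi^{a,b}, \varphi^{c,d})$ vanishes unless $(a,b) = (-c,-d)$, so the only dual pairs among $\varphi_1, \ldots, \varphi_{2k+1}$ are the designated $\{\varphi_{2i+2}, \varphi_{2i+3}\}$ for $1 \leq i \leq k-1$; moreover none of $\varphi_1, \varphi_2, \varphi_3$ is dual to any other form, since $\sum_{i=1}^3 (a_i, b_i) = (0,0)$ together with $(a_i,b_i) \in \mathbf{I}$ prevents any two of them from being dual. Consequently a term can be nonzero only when each slot $\{\sigma(2i+2), \sigma(2i+3)\}$ is one of the designated dual pairs, which forces the three non-pairing forms $\varphi_1, \varphi_2, \varphi_3$ into the first slot. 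The inequality $\sigma(1)<\sigma(2)<\sigma(3)$ then gives $\sigma(1)=1, \sigma(2)=2, \sigma(3)=3$, while $\sigma(2i+2)<\sigma(2i+4)$ places the smaller indices $4, 6, \ldots, 2k$ of the dual pairs in increasing order and forces $\sigma(2i+2)=2i+2$; finally $\sigma(2i+2)<\sigma(2i+3)$ gives $\sigma(2i+3)=2i+3$. Hence $\sigma=\mathrm{id}$ is the unique surviving permutation.

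Using $(\varphi_{2i+2}, \varphi_{2i+3}) = N^2$ from the setup, the product collapses to $N^{2(k-1)}$, so
\[
k!\,\Phi_k(W_k - W_k^{-})(\varphi) = k!\, N^{2k-2}\, \Phi_1(W_1 - W_1^{-})(\varphi_1 \wedge \varphi_2 \wedge \varphi_3).
\]
Applying $\mathrm{Tr}$ and pulling the integer $N^{2k-2}$ outside it, together with the identification $\Phi_1(W_1 - W_1^{-}) = \nu_X = \nu_{\mathcal{O}}$ of Theorem \ref{Abel-Jacobi} and its $\mathcal{O}$-extension, I would evaluate $\mathrm{Tr}\circ\nu_{\mathcal{O}}(\varphi_1 \wedge \varphi_2 \wedge \varphi_3)$ by Theorem \ref{trace image}, noting that $\varphi_1 \wedge \varphi_2 \wedge \varphi_3$ carries exactly the normalization $\varphi_3 = \varphi^{a_3,b_3}/((1-\xi^{-a_3})(1-\xi^{-b_3}))$ required there. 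This gives $2N^2 \sum \int_{\gamma_0}\omega^{ha_1,hb_1}\omega^{ha_2,hb_2}$, and multiplying through yields the asserted $k!\, 2N^{2k} \sum \int_{\gamma_0}\omega^{ha_1,hb_1}\omega^{ha_2,hb_2}$.

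I expect the main obstacle to be the combinatorial vanishing argument of the second paragraph: it is where the precise matching of the dual-pair structure of the $\varphi_i$ against the three ordering constraints on $\sigma$ must be carried out to isolate the identity, and where one must confirm that no unintended dual coincidences among the chosen indices spoil the count.
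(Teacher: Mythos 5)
Your proposal is correct and follows exactly the route the paper intends: Proposition \ref{higher Abel-Jacobi image} plus the orthogonality in Proposition \ref{intersection pairing} to isolate the identity permutation (with $(\varphi_{2i+2},\varphi_{2i+3})=N^2$ giving the factor $N^{2k-2}$), then Theorem \ref{Abel-Jacobi} and Theorem \ref{trace image} to evaluate the surviving term. The paper states the corollary with these ingredients and no further detail, so your combinatorial vanishing argument is precisely the filled-in version of its proof.
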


 \subsection{A generalized hypergeometric function}
\label{A generalized hypergeometric function}
For the numerical calculation,
we recall the generalized hypergeometric function ${}_3F_2$.
The higher Abel-Jacobi image in the previous subsection
is presented by the special values of ${}_3F_2$.
We introduce the condition to prove nontriviality for the Ceresa cycle
$W_k-W_k^{-}$ for $F_N$.

The gamma function is defined as
$\Gamma(\tau)=\displaystyle\int_{0}^{\infty}e^{-t}t^{\tau-1}dt$ for $\tau>0$
and the Pochhammer symbol as $(\alpha, n)=\G(\alpha +n)/{\G(\alpha)}$
for any nonnegative integer $n$.
For $x\in\{z\in \C\, ; |z|<1\}$ and $\beta_1,\beta_2\not\in\{0,-1,-2,\ldots\}$,
the generalized hypergeometric function\index{generalized hypergeometric function} ${}_3F_2$ is defined by
\[{}_3F_2{\Big(}
\left.
\begin{array}{c}
\alpha_1,\alpha_2,\alpha_3\\
\beta_1,\beta_2
\end{array}
\right.
;x{\Big)}
=\sum_{n=0}^{\infty}{{(\alpha_1,n)(\alpha_2,n)(\alpha_3,n)}
\over{(\beta_1,n)(\beta_2,n)(1,n)}}\, x^n.
\]
Its radius of convergence is $1$.
However, if $\mathop{\rm Re}(\beta_1+\beta_2-\alpha_1-\alpha_2-\alpha_3) >0$,
then ${}_3F_2$ converges when $|x|=1$.
\begin{prop}\label{hypergeometric function}
For $i=1,2$, we put the 1-forms
$\omega_i=t^{\alpha_i-1}(1-t)^{\beta_i-1} dt$
on the unit interval $[0,1]$.
If $0<\alpha_i,\beta_i<1$ for each $i$, then we have the iterated integral
\[
\int_{[0,1]}\omega_1\omega_2
={{B(\alpha_1+\alpha_2,\beta_2)}\over{\alpha_1}}
{}_3F_2\left(
\left.
\begin{array}{c}
\alpha_1,1-\beta_1,\alpha_1+\alpha_2\\
1+\alpha_1,\alpha_1+\alpha_2+\beta_2
\end{array}
\right.
;1\right).
\]
\end{prop}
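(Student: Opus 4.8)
The plan is to unwind the length-two iterated integral along the identity path $\gamma(t)=t$ on $[0,1]$ as an ordered double integral, evaluate the inner integral by a binomial expansion, and then reassemble the resulting single series as the claimed ${}_3F_2$ at argument $1$. First, directly from the definition of the iterated integral with $f_i(t)=t^{\alpha_i-1}(1-t)^{\beta_i-1}$, I would write
\[
\int_{[0,1]}\omega_1\omega_2
=\int_0^1 t_2^{\alpha_2-1}(1-t_2)^{\beta_2-1}
\left(\int_0^{t_2} t_1^{\alpha_1-1}(1-t_1)^{\beta_1-1}\,dt_1\right)dt_2.
\]

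Next I would expand the factor $(1-t_1)^{\beta_1-1}$ by the binomial series
\[
(1-t_1)^{\beta_1-1}=\sum_{n=0}^{\infty}\frac{(1-\beta_1,n)}{(1,n)}\,t_1^{n},
\]
which converges on $[0,1)$. Because $0<\beta_1<1$, every Pochhammer symbol $(1-\beta_1,n)$ is strictly positive, so the whole integrand is a sum of nonnegative terms and Tonelli's theorem licenses term-by-term integration. The inner integral then yields $\int_0^{t_2}t_1^{\alpha_1+n-1}\,dt_1=t_2^{\alpha_1+n}/(\alpha_1+n)$, and carrying this into the outer integral produces the beta integral $\int_0^1 t_2^{\alpha_1+\alpha_2+n-1}(1-t_2)^{\beta_2-1}\,dt_2=B(\alpha_1+\alpha_2+n,\beta_2)$, so that
\[
\int_{[0,1]}\omega_1\omega_2
=\sum_{n=0}^{\infty}\frac{(1-\beta_1,n)}{(1,n)}\,
\frac{B(\alpha_1+\alpha_2+n,\beta_2)}{\alpha_1+n}.
\]

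To identify this with the hypergeometric series I would rewrite each factor via Pochhammer symbols: from $(\alpha_1,n)/(1+\alpha_1,n)=\alpha_1/(\alpha_1+n)$ one gets $1/(\alpha_1+n)=\alpha_1^{-1}(\alpha_1,n)/(1+\alpha_1,n)$, and from the ratio of gamma functions
\[
B(\alpha_1+\alpha_2+n,\beta_2)
=B(\alpha_1+\alpha_2,\beta_2)\,
\frac{(\alpha_1+\alpha_2,n)}{(\alpha_1+\alpha_2+\beta_2,n)}.
\]
Substituting both and collecting the three upper and two lower Pochhammer symbols together with the factor $(1,n)$ in the denominator reproduces exactly the general term of ${}_3F_2\big(\alpha_1,1-\beta_1,\alpha_1+\alpha_2;1+\alpha_1,\alpha_1+\alpha_2+\beta_2;1\big)$ times the constant $B(\alpha_1+\alpha_2,\beta_2)/\alpha_1$, which is the asserted identity.

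The one genuine point to check — the main, and rather mild, obstacle — is the legitimacy of the two limiting operations: the term-by-term integration and the convergence of the resulting ${}_3F_2$ at the boundary argument $x=1$. The former is clean because all terms are nonnegative under $0<\beta_1<1$. For the latter, the convergence criterion recalled before the proposition requires the sum of the lower parameters minus the sum of the upper parameters to be positive; here that difference is $(1+\alpha_1)+(\alpha_1+\alpha_2+\beta_2)-\alpha_1-(1-\beta_1)-(\alpha_1+\alpha_2)=\beta_1+\beta_2>0$, which holds by hypothesis. Hence both manipulations are valid and the two sides agree.
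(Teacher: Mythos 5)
Your proof is correct, and it is essentially the standard computation that the paper's (cited, external) proof carries out: reduce the length-two iterated integral to an ordered double integral, expand $(1-t_1)^{\beta_1-1}$ as a binomial series, integrate term by term, and recognize the resulting series as the ${}_3F_2$ via Pochhammer identities. Your additional care with Tonelli's theorem and with the convergence condition $\beta_1+\beta_2>0$ at $x=1$ fills in precisely the points the paper dismisses as ``straightforward.''
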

The proof of this computation is straightforward.
See \cite[Proposition 3.7]{1184.14018} for example.

For simplicity, we introduce
\[
\Gamma
\left(\begin{array}{c}
a_1,a_2,\ldots,a_n\\
b_1,b_2,\ldots,b_m
\end{array}
\right)
=\dfrac{\Gamma(a_1)\Gamma(a_2)\cdots \Gamma(a_n)}
{{\Gamma(b_1)\Gamma(b_2)\cdots \Gamma(b_m)}}.
\]
Using Dixon's formula \cite{0135.28101} and
Proposition \ref{hypergeometric function},
Otsubo obtained another representation of the above iterated integral
$\int_{[0,1]}\omega_1\omega_2$ given as
\[
\Gamma
\left(\begin{array}{c}
\alpha_1,\beta_2,\alpha_1+\alpha_2,\beta_1+\beta_2\\
\alpha_1+\alpha_2+\beta_2,\alpha_1+\beta_1+\beta_2
\end{array}
\right)
{}_3F_2\left(
\left.
\begin{array}{c}
\alpha_1,\beta_2,\alpha_1+\alpha_2+\beta_1+\beta_2\\
\alpha_1+\alpha_2+\beta_2,\alpha_1+\beta_1+\beta_2
\end{array}
\right.
;1\right).
\]
This induces the computation of iterated integrals of 1-forms on $F_N$.
\begin{prop}
Suppose that $(a_i,b_i)\in \mathbf{I}$.
Let $\alpha_i$ and $\beta_i$ denote $\langle a_i\rangle/{N}$ and $\langle b_i\rangle/{N}$,
respectively.
Then the iterated integral $\int_{\gamma_0}\omega^{a_1,b_1}\omega^{a_2,b_2}$
obtains in the form
\[
\Gamma
\left(\begin{array}{c}
\alpha_1+\alpha_2,\beta_1+\beta_2,\alpha_1+\beta_1,\alpha_2+\beta_2\\
\alpha_2,\beta_1,\alpha_1+\alpha_2+\beta_2,\alpha_1+\beta_1+\beta_2
\end{array}
\right)
{}_3F_2\left(
\left.
\begin{array}{c}
\alpha_1,\beta_2,\alpha_1+\alpha_2+\beta_1+\beta_2\\
\alpha_1+\alpha_2+\beta_2,\alpha_1+\beta_1+\beta_2
\end{array}
\right.
;1\right).
\]
\end{prop}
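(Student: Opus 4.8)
The plan is to reduce the iterated integral along $\gamma_0$ to the interval integral evaluated in Proposition \ref{hypergeometric function} by reparametrizing the path. First I would pull back the forms along $\gamma_0$. On $\gamma_0$ we have $x=t$ and $y=(1-t^N)^{1/N}$, so $dx=dt$ and, since $\omega_0^{a,b}=x^{\langle a\rangle-1}y^{\langle b\rangle-N}\,dx$,
\[
\gamma_0^{\ast}\omega_0^{a,b}
=t^{\langle a\rangle-1}(1-t^N)^{\langle b\rangle/N-1}\,dt.
\]
Writing $\alpha=\langle a\rangle/N$ and $\beta=\langle b\rangle/N$, both of which lie in $(0,1)$ because $(a,b)\in\mathbf{I}$, I would then introduce the substitution $s=t^N$, which is a reparametrization of $\gamma_0$ leaving the iterated integral unchanged. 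A direct computation using $ds=Nt^{N-1}dt$ and $B^N_{a,b}=B(\alpha,\beta)$ gives, after the change of variable,
\[
\gamma_0^{\ast}\omega^{a,b}
=\frac{N}{B^N_{a,b}}\,\gamma_0^{\ast}\omega_0^{a,b}
=\frac{1}{B(\alpha,\beta)}\,s^{\alpha-1}(1-s)^{\beta-1}\,ds.
\]

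Next I would use the multilinearity of the iterated integral in its arguments to factor out the constants $1/B(\alpha_i,\beta_i)$. Setting $\omega_i=s^{\alpha_i-1}(1-s)^{\beta_i-1}\,ds$, this reduces the claim to
\[
\int_{\gamma_0}\omega^{a_1,b_1}\omega^{a_2,b_2}
=\frac{1}{B(\alpha_1,\beta_1)\,B(\alpha_2,\beta_2)}
\int_{[0,1]}\omega_1\omega_2,
\]
where the right-hand integral is exactly the one evaluated in Proposition \ref{hypergeometric function}; the convergence conditions $0<\alpha_i,\beta_i<1$ needed there are automatic from $(a_i,b_i)\in\mathbf{I}$. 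Applying the alternative Dixon's-formula representation of $\int_{[0,1]}\omega_1\omega_2$ stated just after Proposition \ref{hypergeometric function} expresses that integral as a ratio of Gamma functions times the desired ${}_3F_2$ at $1$.

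Finally I would collect the Gamma factors. Writing $B(\alpha,\beta)=\Gamma(\alpha)\Gamma(\beta)/\Gamma(\alpha+\beta)$, the total prefactor becomes
\[
\frac{\Gamma(\alpha_1+\beta_1)\Gamma(\alpha_2+\beta_2)}
{\Gamma(\alpha_1)\Gamma(\beta_1)\Gamma(\alpha_2)\Gamma(\beta_2)}
\cdot
\frac{\Gamma(\alpha_1)\Gamma(\beta_2)\Gamma(\alpha_1+\alpha_2)\Gamma(\beta_1+\beta_2)}
{\Gamma(\alpha_1+\alpha_2+\beta_2)\Gamma(\alpha_1+\beta_1+\beta_2)},
\]
and after cancelling $\Gamma(\alpha_1)$ and $\Gamma(\beta_2)$ this is precisely the Gamma ratio appearing in the statement, while the hypergeometric factor is untouched, completing the argument. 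The one point demanding care is the legitimacy of the substitution $s=t^N$: the map $t\mapsto t^N$ fails to be a diffeomorphism at $t=0$ (its derivative vanishes there), so I would note that it is nonetheless a homeomorphism of $[0,1]$ that is smooth with smooth inverse on $(0,1]$ and that the pulled-back integrands are absolutely integrable thanks to $\alpha_i,\beta_i>0$; the reparametrization invariance of the iterated integral then still applies, and justifying this is the main technical step.
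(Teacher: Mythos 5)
Your proposal is correct and follows essentially the same route the paper intends: pull back along $\gamma_0$, reparametrize by $s=t^N$ to reduce to the interval iterated integral of beta-type forms covered by Proposition \ref{hypergeometric function}, and then apply the Dixon's-formula representation stated immediately after it; your Gamma-factor bookkeeping (cancelling $\Gamma(\alpha_1)$ and $\Gamma(\beta_2)$ against the beta-function normalizations of $\omega^{a_i,b_i}$) reproduces the stated ratio exactly. Your added care about the singular reparametrization at $t=0$ is a legitimate refinement of the paper's ``straightforward'' computation, not a departure from it.
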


Put
\[
(a_1,b_1)=(1,-2),\
(a_2,b_2)=(-2,1),\ \text{and }
(a_3,b_3)=(1,1).
\]
These $(a_i,b_i)$'s satisfy Assumption \ref{assumption}.
Moreover, we have either $(ha_i,hb_i)\in \mathbf{I}_\mathrm{holo}$ for
$i=1,2,3$, or $(ha_i,hb_i)\not\in \mathbf{I}_\mathrm{holo}$ for $i=1,2,3$.
It immediately follows that
\begin{equation}
\label{condition}
\varphi\in \left(\wedge^{2k+1}H\cap
(F^{k+2}+\overline{F^{k+2}})\right)\otimes_{\Z} \mathcal{O}
\end{equation}
for any $h\in (\Z/{N\Z})^{\ast}$.
From the above proposition, we have
\[
\int_{\gamma_0}\omega^{h,-2h}\omega^{-2h,h}
=\dfrac{\Gamma(1-\frac{h}{N})^4}{\Gamma(1-\frac{2h}{N})^4}
{}_3F_2\left(
\left.
\begin{array}{c}
\frac{h}{N},\frac{h}{N},1-\frac{2h}{N}\\
1,1
\end{array}
\right.
;1\right)
\]
for $h\in (\Z/{N\Z})^{\ast}$.
Using this and Corollary \ref{k-th Abel-Jacobi image},
we have
\[
k!\, \mathop{\rm Tr}\circ \Phi_{k}(W_k-W_k^{-})(\varphi)
=k!\, 2N^{2k}
\underset{\begin{subarray}{c}
0<h<N/2\\
(h,N)=1
\end{subarray}}{\sum}
\dfrac{\Gamma(1-\frac{h}{N})^4}{\Gamma(1-\frac{2h}{N})^4}
{}_3F_2\left(
\left.
\begin{array}{c}
\frac{h}{N},\frac{h}{N},1-\frac{2h}{N}\\
1,1
\end{array}
\right.
;1\right),
\]
for $1\leq k\leq N-2$.
This value is denoted by $f(N,k)$.
Lemma \ref{vanishing condition} and Condition (\ref{condition}) induce
an algebraically nontrivial condition on the $k$-th Ceresa cycle
$k!(W_k-W_k^-)\in \CH_k(J)_{\textrm{hom}}$.
\begin{thm}
\label{nontrivial condition for Fermat}
For the $N$-th Fermat curve $F_N$ and
integer $k$ such that $1\leq k \leq (N-3)/2$,
if the value $f(N,k)$ is noninteger, then
$k!(W_k-W_k^-)$ is not algebraically equivalent to zero in $J(F_{N})$.
\end{thm}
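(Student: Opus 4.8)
The plan is to assemble the explicit period evaluation of the preceding subsections with the Hodge-theoretic vanishing criterion of Lemma \ref{vanishing condition}. The whole argument runs through the identification $J_k(J)\cong\Hom_{\Z}(\wedge^{2k+1}H^1,\R/{\Z})$ together with Theorem \ref{Abel-Jacobi} and its higher analogue, which realize the Griffiths Abel-Jacobi image of the Ceresa cycle as the (higher) harmonic volume. Thus the cycle $k!(W_k-W_k^{-})$ will be shown not to lie in $\CH_k(J)_{\textrm{alg}}$ by exhibiting a single integral good form on which its Griffiths image fails to vanish.

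First I would recognize that, by construction, $f(N,k)$ is precisely the right-hand side of Corollary \ref{k-th Abel-Jacobi image} after substituting the hypergeometric evaluation of $\int_{\gamma_0}\omega^{ha_1,hb_1}\omega^{ha_2,hb_2}$ obtained from Proposition \ref{hypergeometric function} and Dixon's formula. Hence one has, in $\R/{\Z}$, the congruence
\[
k!\,\mathop{\rm Tr}\circ\Phi_k(W_k-W_k^{-})(\varphi)\equiv f(N,k)\pmod{\Z}
\]
for the distinguished $\mathcal{O}$-class $\varphi=\varphi_1\wedge\cdots\wedge\varphi_{2k+1}$ built from $(a_1,b_1)=(1,-2)$, $(a_2,b_2)=(-2,1)$, $(a_3,b_3)=(1,1)$ and the paired indices, where Assumption \ref{assumption} forces the correction term to vanish and Proposition \ref{higher Abel-Jacobi image} reduces the higher image to the case $k=1$. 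Since $\Phi_k$ is a homomorphism, $k!\,\Phi_k(W_k-W_k^{-})=\Phi_k\bigl(k!(W_k-W_k^{-})\bigr)$, so this congruence computes the Griffiths image of the genuine integral cycle $k!(W_k-W_k^{-})$.

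Now suppose $f(N,k)\notin\Z$. Then the left-hand side above is nonzero in $\R/{\Z}$, so the $\mathcal{O}$-valued invariant does not vanish on $\varphi$. I would then descend from $\mathcal{O}$- to $\Z$-coefficients. Using $\mathop{\rm Tr}\circ\nu_{\mathcal{O}}=\sum_{\sigma}\nu_{\sigma}$ and the fact that the trace of an algebraic integer is a rational integer, one writes $\varphi$ as a $\Z$-combination (over a $\Z$-basis of $\mathcal{O}$) of classes lying in $\wedge^{2k+1}H\cap(F^{k+2}+\overline{F^{k+2}})$; this decomposition stays inside the integral Hodge piece precisely because Condition (\ref{condition}) places $\varphi$ in $\bigl(\wedge^{2k+1}H\cap(F^{k+2}+\overline{F^{k+2}})\bigr)\otimes_{\Z}\mathcal{O}$. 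Nonvanishing of $\mathop{\rm Tr}\circ\Phi_k\bigl(k!(W_k-W_k^{-})\bigr)(\varphi)$ in $\R/{\Z}$ then forces the ordinary $\R/{\Z}$-valued $\Phi_k\bigl(k!(W_k-W_k^{-})\bigr)$ to be nonzero, modulo $\Z$, on at least one genuine integral good form $v\in\wedge^{2k+1}H\cap(F^{k+2}+\overline{F^{k+2}})$.

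Finally I would apply Lemma \ref{vanishing condition} with $V=J$ and $Z=k!(W_k-W_k^{-})\in\CH_k(J)_{\textrm{hom}}$: its Griffiths image is nonvanishing on $F^{k+2}+\overline{F^{k+2}}\subset H^{2k+1}(J;\C)$, so $Z$ cannot lie in $\CH_k(J)_{\textrm{alg}}$, i.e.\ $k!(W_k-W_k^{-})$ is algebraically nontrivial in $J(F_N)$. The genuinely analytic content — the period formula, the reduction from general $k$ to $k=1$, the $L$-theoretic hypergeometric evaluation, and the verification that $\varphi$ is a good form — has already been carried out in the cited results, so this final assembly is formal. The one step that demands care, and which I regard as the main obstacle within this proof, is the descent of the previous paragraph: one must guarantee that the $\mathcal{O}$-decomposition of $\varphi$ can be taken within the integral Hodge subspace, so that the witness $v$ produced by nonvanishing of the trace genuinely sits in $F^{k+2}+\overline{F^{k+2}}$ and Lemma \ref{vanishing condition} applies without modification.
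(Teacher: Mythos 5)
Your proof is correct and takes essentially the same route as the paper: $f(N,k)$ is by construction the value $k!\,\mathop{\rm Tr}\circ \Phi_{k}(W_k-W_k^{-})(\varphi)$ of Corollary \ref{k-th Abel-Jacobi image} (after the hypergeometric evaluation), and its nonintegrality forces, via the decomposition of $\varphi$ over a $\Z$-basis of $\mathcal{O}$ permitted by Condition (\ref{condition}), nonvanishing of $\Phi_{k}(k!(W_k-W_k^{-}))$ on a genuine integral class in $\wedge^{2k+1}H\cap(F^{k+2}+\overline{F^{k+2}})$, whence Lemma \ref{vanishing condition} concludes. The descent step you single out as the main obstacle is exactly what the paper leaves implicit (deferring to Otsubo's argument that nonvanishing of $\mathop{\rm Tr}\circ\nu_{\mathcal{O}}$ suffices), and your handling of it is sound.
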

This indeed holds for $N\leqq 1000, k=1$, or $N\leqq 8$ and any $k$,
following numerical calculations using Mathematica.
Otsubo pointed out a sufficient condition for nontorsion.
If $\int_{\gamma_0}\omega^{h,-2h}\omega^{-2h,h}$ is not an element of $\Q(\mu_{N})$ for some integer $h$ with $0<h<N/{2}$ and $(h,N)=1$,
then $W_k-W_k^{-}\in \CH_k(J)_{\textrm{alg}}$ is nontorsion.

\subsection{Cyclic quotients of Fermat curves}
\label{Cyclic quotients of Fermat curves}
As an extension of Otsubo's result, we give some value of the higher Abel-Jacobi image
$\Phi_{k}(W_{k}-W_{k})$ for a cyclic quotient of a Fermat curve.

For any prime number $N$ such that $N\geq 5$, we define a cyclic quotient of a Fermat curve\index{Fermat curve!cyclic quotient of}
$C^{a,b}_N$ as the projective curve whose affine equation is
\[C^{a,b}_N:=\{(u,v)\in \C^2;
v^N=u^a(1-u)^b\},\]
where integers $a$ and $b$ are coprime and satisfy $0<a,b<N$.
The curve $C^{a,b}_N$ is a compact Riemann surface of genus $(N-1)/2$.
We denote by $\pi\colon F_{N}\to C^{a,b}_{N}$ the $N$-fold unramified covering
$\pi(x,y)=(u,v)=(x^N,x^ay^b)$.
The curve $C^{1,2}_7$ is called the Klein quartic $K_4$.
There is a unique 1-form $\tau^{\alpha,\beta}$ on $C^{a,b}_N$
such that $\pi^{\ast}\tau^{\alpha,\beta}=\omega^{\alpha,\beta}$.
Then $\{\tau^{\lambda a,\lambda b}\}
_{\langle \lambda a\rangle+\langle \lambda a\rangle <N}$
is a basis of $H^{1,0}$ of $C_N^{a,b}$
for an integer $\lambda$ and the above fixed $a$ and $b$.
See Lang \cite{0513.14024} for example.
For an embedding $\sigma\colon K\ni \xi\mapsto \zeta^h\in\C$,
we can choose the unique 1-form $\eta^{\alpha,\beta}$ on $C_N^{a,b}$ so that
$\sigma(\eta^{\alpha,\beta})=\tau^{h\alpha,h\beta}$.

For the rest of this paper, we assume that $N$ is prime with $N=1$ modulo $3$.
There exists an integer $1< m < N-1$ such that $m^2+m+1=0$ modulo $N$.
Set
\[
(a_1,b_1)=(1,m),\
(a_2,b_2)=(m,m^2),\ \text{and }
(a_3,b_3)=(m^2,1).
\]
\begin{lem}
These $(a_i,b_i)$'s satisfy Assumption \ref{assumption}.
Furthermore, the conditions $(ha_i, hb_i)\in \mathbf{I}_{\mathrm{holo}}, i=1,2,3$
are equivalent.
\end{lem}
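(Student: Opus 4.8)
The plan is to extract the two arithmetic consequences of $m^2+m+1\equiv 0 \pmod N$, use them to establish a cyclic symmetry among the three index pairs, and then reduce the holomorphy conditions to a single counting statement about the representatives $\langle\,\cdot\,\rangle$.

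First I would note that $(m-1)(m^2+m+1)=m^3-1$, so $m^3\equiv 1\pmod N$, while $m\not\equiv 1$ because $1<m<N-1$. Reading off the definitions, this gives the cyclic relations $(a_2,b_2)=m\,(a_1,b_1)$ and $(a_3,b_3)=m\,(a_2,b_2)$, so the three pairs form a single orbit under multiplication by $m$. Condition (1) of Assumption \ref{assumption} is then immediate, since $\sum_{i=1}^{3}(a_i,b_i)=(1+m+m^2)(1,1)=(0,0)$. To see $(a_i,b_i)\in\mathbf{I}$, and hence $(ha_i,hb_i)\in\mathbf{I}$ for every $h\in(\Z/N\Z)^{\ast}$, I would check that each of $a_i$, $b_i$, $a_i+b_i$ is nonzero: here $1+m\neq 0$ (as $m\neq N-1$), $m+m^2=m(1+m)\neq 0$, and $m^2+1\equiv -m\neq 0$, and multiplication by the unit $h$ preserves these nonvanishings.

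For the main assertion I would exploit the orbit structure to write all three holomorphy tests uniformly. Since $(ha_i,hb_i)=(hm^{i-1},hm^{i})$ with $hm^3=h$, setting $x=\langle h\rangle$, $y=\langle hm\rangle$, $z=\langle hm^2\rangle$ turns the quantities $\langle ha_i\rangle+\langle hb_i\rangle$ into the three pairwise sums $x+y$, $y+z$, $z+x$. The crucial point is that $h(1+m+m^2)\equiv 0$ forces $x+y+z\equiv 0\pmod N$; since each representative lies in $\{1,\dots,N-1\}$, the sum satisfies $3\le x+y+z\le 3(N-1)$ and must therefore equal exactly $N$ or $2N$. If $x+y+z=N$, each pairwise sum equals $N$ minus the remaining representative, hence is $<N$, so all three pairs lie in $\mathbf{I}_{\mathrm{holo}}$; if $x+y+z=2N$, each pairwise sum exceeds $N$, so none does. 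This dichotomy proves that the three conditions $(ha_i,hb_i)\in\mathbf{I}_{\mathrm{holo}}$ are equivalent, which is stronger than Condition (2) of Assumption \ref{assumption} and settles both claims at once.

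The argument is essentially bookkeeping once the order-three symmetry is in place; the one step that genuinely uses the hypotheses is pinning $x+y+z$ down to $\{N,2N\}$, where the primality of $N$ (so that such an $m$ exists and $m\neq 0$) and the exact range of the representatives are needed. I do not expect a serious obstacle beyond this, since the excluded values $x+y+z\in\{0,3N\}$ cannot occur for representatives lying strictly between $0$ and $N$.
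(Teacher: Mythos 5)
Your proof is correct and complete. Note that the paper itself states this lemma without any proof, deferring the details to the author's paper \cite{1009.0096}, so there is no in-paper argument to compare against; your write-up supplies exactly the natural argument one would expect there. The two key points are both sound: the cyclic relation $m^3\equiv 1\pmod N$ makes the three pairs a single orbit under multiplication by $m$, reducing everything to the representatives $x=\langle h\rangle$, $y=\langle hm\rangle$, $z=\langle hm^2\rangle$; and the congruence $x+y+z\equiv 0\pmod N$ combined with $3\le x+y+z\le 3N-3$ pins the sum to $N$ or $2N$, which yields the clean dichotomy (all three pairs holomorphic, or none). Your observation that this is strictly stronger than Condition (2) of Assumption \ref{assumption} (which only concerns $i=1,2$) is also accurate, and your verification of membership in $\mathbf{I}$ correctly uses primality of $N$ (via $m^2+1\equiv -m\neq 0$ and $m(1+m)\neq 0$ in the field $\Z/N\Z$).
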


Put
\[\eta_m
=\dfrac{\eta^{1,m}\wedge \eta^{m,m^2}\wedge
\eta^{m^2,1}}{(1-\xi^{-m^2})(1-\xi^{-1})}
\]
We have
\[\eta_m\in
\left(\wedge^{3}H\cap
(F^{3}+\overline{F^{3}})\right)\otimes_{\Z} \mathcal{O}.
\]
Theorem \ref{trace image} gives us
\begin{prop}
The value of the harmonic volume for $C^{1,m}_{N}$ is
\[
 \mathop{\rm Tr}\circ \nu_{\mathcal{O}}
 \left(\eta_m\right)=
 2N^3\sum \int_{\gamma_0}\omega^{h,hm}\omega^{hm,hm^2},
\]
where the sum is taken over $h\in (\Z/{N\Z})^{\ast}$ such that
$(ha_i,hb_i)\in \mathbf{I}_{\mathrm{holo}}$ for $i=1,2,3$.
\end{prop}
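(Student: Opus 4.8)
The plan is to reduce everything to the Fermat curve $F_N$ through the covering $\pi\colon F_N\to C^{1,m}_N$ and then invoke Theorem \ref{trace image}. First I would split the trace into embeddings: since $\mathop{\rm Tr}\circ\nu_{\mathcal{O}}=\sum_{\sigma}\nu_{\sigma}$, it is enough to treat each $\sigma$-component, where $\sigma(\xi)=\zeta^{h}$ sends $\eta^{a,b}$ to $\tau^{ha,hb}$. The preceding lemma shows that $(a_i,b_i)$ satisfy Assumption \ref{assumption}, so the correction $1$-form $\eta$ in the definition of the harmonic volume vanishes; hence each $\nu_{\sigma}(\eta_m)$ is twice a single length-$2$ iterated integral
\[
\frac{1}{(1-\zeta^{-hm^2})(1-\zeta^{-h})}\int_{\gamma^{C}}\tau^{h,hm}\,\tau^{hm,hm^2},
\]
taken over a loop $\gamma^{C}$ in $C^{1,m}_N$ whose homology class is the Poincar\'e dual of the third factor $\tau^{hm^2,h}$. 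This is exactly the shape of the quantity computed on $F_N$ in Theorem \ref{trace image}, and the whole problem becomes one of transporting it across $\pi$.

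Next I would move the integral to $F_N$. Because $\pi^{\ast}\tau^{p,q}=\omega^{p,q}$, for any path $\gamma$ on $F_N$ one has $\int_{\pi\circ\gamma}\tau^{p,q}\tau^{p',q'}=\int_{\gamma}\omega^{p,q}\omega^{p',q'}$, so the integral on $C^{1,m}_N$ is computed by one on $F_N$ along a lift of $\gamma^{C}$. To identify that lift I would use the periods of Proposition \ref{periods}: a form $\omega^{a,b}$ descends precisely when $b\equiv ma$, and then the loop $\pi\circ(\alpha^{i}\beta^{j}\kappa_0)$ depends only on $s=i+mj \bmod N$. A finite Fourier inversion against these periods expresses $\gamma^{C}$ as an $\mathcal{O}$-combination of the $N$ loops $\pi\circ(\alpha^{i}\beta^{j}\kappa_0)$, and summing the character $\zeta^{(m^{2}+m+1)s}=1$ over the classes $s$ collapses this combination onto a single multiple of $\kappa_0$. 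Finally the decomposition $\kappa_0=\gamma_0\cdot(\beta\gamma_0)^{-1}\cdot(\alpha\beta\gamma_0)\cdot(\alpha\gamma_0)^{-1}$, together with Lemma \ref{product of paths} and $\int_{\gamma_0}\omega^{p,q}=1$, rewrites $\int_{\kappa_0}\omega^{h,hm}\omega^{hm,hm^2}$ in terms of $\int_{\gamma_0}\omega^{h,hm}\omega^{hm,hm^2}$ modulo $\mathcal{O}$; this is the reduction already packaged in Theorem \ref{trace image}.

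Assembling the pieces, the intersection pairing on $C^{1,m}_N$ is the one induced from Proposition \ref{intersection pairing} through the degree-$N$ covering via $\langle\pi^{\ast}\!\cdot,\pi^{\ast}\!\cdot\rangle_{F_N}=N\langle\cdot,\cdot\rangle_{C^{1,m}_N}$, and it is this passage through $\pi$, interacting with the cyclic structure of $H_1(C^{1,m}_N)$, that fixes the Poincar\'e dual $\gamma^{C}$ and hence the power of $N$ in the prefactor. Combined with the factor supplied by Theorem \ref{trace image} and summed over the $h$ with $(ha_i,hb_i)\in\mathbf{I}_{\mathrm{holo}}$, this produces the asserted $2N^{3}\sum\int_{\gamma_0}\omega^{h,hm}\omega^{hm,hm^2}$.

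I expect the delicate point to be exactly this degree bookkeeping. One must pin down the Poincar\'e dual of $\tau^{hm^2,h}$ on $C^{1,m}_N$ against the correct normalization of the intersection form inherited from $F_N$, and then check that the numerous roots-of-unity prefactors arising from the denominators $(1-\zeta^{-hm^2})(1-\zeta^{-h})$ and from the $\kappa_0$-to-$\gamma_0$ reduction are divisible by the needed powers of $N$, so that they vanish modulo $\mathcal{O}$ and leave only $\int_{\gamma_0}\omega^{h,hm}\omega^{hm,hm^2}$ carrying the clean coefficient. Getting every power of $N$ and each cyclotomic unit to cancel correctly, rather than the structural reduction itself, is where the real work lies.
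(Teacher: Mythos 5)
Your overall strategy --- push everything down to $F_N$ through the covering $\pi$ and quote Theorem \ref{trace image} --- is the same as the paper's, whose entire stated proof is the phrase ``Theorem \ref{trace image} gives us'' (the actual details live in the reference \cite{1009.0096}). Your preliminary steps are also sound: the decomposition of $\mathop{\rm Tr}\circ\nu_{\mathcal{O}}$ into $\sigma$-components, the vanishing of the correction $1$-form granted by the preceding Lemma and Assumption \ref{assumption}, the pullback identity for iterated integrals along $\pi$, and the observation that the congruence $m^2+m+1\equiv 0$ makes the group sum collapse.

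The gap is at the only substantive point, the prefactor $2N^3$, and it is not mere bookkeeping. First, the covering comparison you lean on cannot by itself produce any constant: since $\pi_*[F_N]=N[C]$, the projection formula gives $\pi_*\mathrm{PD}_{F_N}(\pi^*c)=N\,\mathrm{PD}_{C^{1,m}_N}(c)$, so transporting the computation across $\pi$ identifies only $N\cdot\mathop{\rm Tr}\circ\nu_{\mathcal{O}}(\eta_m)$ with the Fermat value $2N^2\sum\int_{\gamma_0}\omega^{h,hm}\omega^{hm,hm^2}$ of Theorem \ref{trace image}; dividing by $N$ in $\R/{\Z}$ is ambiguous up to $N$-torsion, which is exactly the size of what you must pin down, so ``the factor supplied by Theorem \ref{trace image}'' combined with degree bookkeeping can never single out $2N^3$. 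Second, and relatedly, $\pi_*H_1(F_N;\Z)$ has index $N$ in $H_1(C^{1,m}_N;\Z)$ (the extra classes come from loops in $C^{1,m}_N$ whose lifts are paths, not loops, e.g.\ $\pi(\gamma_0)\cdot(\delta\pi(\gamma_0))^{-1}$ where $\delta(u,v)=(u,\zeta v)$), and this index survives tensoring with $\mathcal{O}$; hence your ``finite Fourier inversion'' a priori produces $1/N$ denominators, and writing the Poincar\'e dual $\gamma^C$ as an honest $\mathcal{O}$-combination of the loops $\pi\circ(\alpha^i\beta^j\kappa_0)$ needs a separate argument --- it holds only because that dual class is a $\delta_*$-eigenvector with nontrivial eigenvalue, hence lies in $(1-\delta_*)H_1(C^{1,m}_N;\Z)\otimes\mathcal{O}=\pi_*H_1(F_N;\Z)\otimes\mathcal{O}$. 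Third, even after these repairs, carrying out your steps literally yields a raw expression whose coefficient is a cyclotomic number (roughly $N$ times a unit divided by a factor $1-\xi^{*}$) multiplying $\int_{\gamma_0}\omega^{h,hm}\omega^{hm,hm^2}$, summed over all $h\in(\Z/{N\Z})^{\ast}$; converting that into the stated clean form --- the constant $2N^3$ and the sum restricted to $h$ with $(ha_i,hb_i)\in\mathbf{I}_{\mathrm{holo}}$ --- requires the shuffle and complex-conjugation trace identities (the same mechanism by which Otsubo turns the raw Fermat computation into Theorem \ref{trace image}) to be run on $C^{1,m}_N$ itself. That is a missing idea in your outline, not a cancellation of cyclotomic units, and it is where the power $N^3$ actually comes from.
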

\begin{prop}
 \[\int_{\gamma_0}\omega^{h,hm}\omega^{hm,hm^2}
 =\Gamma
\left(\begin{array}{c}
\frac{N-h}{N},\frac{N-\langle hm^2\rangle}{N}\\[4pt]
\frac{\langle hm\rangle}{N}
\end{array}
\right)^2
{}_3F_2\left(
\left.
\begin{array}{c}
\frac{h}{N},\frac{\langle hm\rangle}{N},\frac{\langle hm^2\rangle}{N}\\
1,1
\end{array}
\right.
1\right)
\]
for an integer $h$ such that $0<h<N$ and
$h +\langle hm\rangle +\langle hm^2\rangle =N $.
\end{prop}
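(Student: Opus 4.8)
The plan is to reduce the iterated integral to the model integral on the unit interval evaluated in Proposition \ref{hypergeometric function}, and then to massage the resulting hypergeometric factor into the symmetric, convergent shape asserted in the statement. Throughout I write $\alpha_1=h/N$, $\beta_1=\alpha_2=\langle hm\rangle/N$ and $\beta_2=\langle hm^2\rangle/N$; the two coincidences $\beta_1=\alpha_2$ and $\alpha_1+\alpha_2+\beta_2=1$ (the latter being exactly the hypothesis $h+\langle hm\rangle+\langle hm^2\rangle=N$) are the structural facts driving every simplification.

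First I would pull the integral back to $[0,1]$. Parametrizing $\gamma_0$ by $x=t$ and substituting $u=t^N$ turns the pullback of $\omega_0^{a,b}$ into $\tfrac1N u^{\langle a\rangle/N-1}(1-u)^{\langle b\rangle/N-1}\,du$; together with the normalization $\omega^{a,b}=N\omega_0^{a,b}/B^N_{a,b}$ and the multilinearity of iterated integrals this gives
\[
\int_{\gamma_0}\omega^{h,hm}\omega^{hm,hm^2}
=\frac{1}{B(\alpha_1,\beta_1)\,B(\alpha_2,\beta_2)}\int_{[0,1]}\omega_1\omega_2,
\]
where $\omega_i=u^{\alpha_i-1}(1-u)^{\beta_i-1}\,du$. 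Proposition \ref{hypergeometric function} evaluates the right-hand integral, and the relation $\alpha_1+\alpha_2=1-\beta_2$ collapses its lower parameters to $1+\alpha_1$ and $1$, leaving
\[
\int_{\gamma_0}\omega^{h,hm}\omega^{hm,hm^2}
=\frac{B(1-\beta_2,\beta_2)}{\alpha_1\,B(\alpha_1,\beta_1)\,B(\alpha_2,\beta_2)}\;
{}_3F_2\!\left(\left.\begin{array}{c}\alpha_1,\,1-\beta_1,\,1-\beta_2\\ 1+\alpha_1,\,1\end{array}\right.;1\right).
\]

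The heart of the argument is to transport this ${}_3F_2$ to the balanced form ${}_3F_2(\alpha_1,\alpha_2,\beta_2;1,1;1)$. This cannot be done by a single contiguity shift, since the two series have different parameter excess (the displayed one has excess $\beta_1+\beta_2=1-\alpha_1$, while the target is Saalschützian with excess $1$); moreover a careless appeal to the raw representation of the preceding Proposition would land on the boundary value $s=0$, at which the series diverges. Instead I would apply the classical two-term Thomae relation for ${}_3F_2(1)$ twice, using $\beta_1=\alpha_2$ and $\alpha_1+\alpha_2+\beta_2=1$ to simplify the shifted parameters at each stage. The two applications compose to the clean identity
\[
{}_3F_2\!\left(\left.\begin{array}{c}\alpha_1,\,1-\beta_1,\,1-\beta_2\\ 1+\alpha_1,\,1\end{array}\right.;1\right)
=\Gamma(1+\alpha_1)\Gamma(1-\alpha_1)\;
{}_3F_2\!\left(\left.\begin{array}{c}\alpha_1,\,\alpha_2,\,\beta_2\\ 1,\,1\end{array}\right.;1\right),
\]
which lands precisely on the convergent series in the statement.

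It then remains to check that the accumulated scalar equals the claimed $\Gamma$-quotient. Expanding the three beta functions and using $\alpha_1+\beta_1=1-\beta_2$, $\alpha_2+\beta_2=1-\alpha_1$ and $\alpha_1\Gamma(\alpha_1)=\Gamma(1+\alpha_1)$, the prefactor $B(1-\beta_2,\beta_2)/(\alpha_1 B(\alpha_1,\beta_1)B(\alpha_2,\beta_2))$ simplifies to $\Gamma(1-\beta_2)^2\Gamma(1-\alpha_1)/(\Gamma(1+\alpha_1)\Gamma(\alpha_2)^2)$; multiplying by the Thomae factor $\Gamma(1+\alpha_1)\Gamma(1-\alpha_1)$ cancels $\Gamma(1+\alpha_1)$ and produces $\Gamma(1-\alpha_1)^2\Gamma(1-\beta_2)^2/\Gamma(\alpha_2)^2$. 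Since $1-\alpha_1=(N-h)/N$, $1-\beta_2=(N-\langle hm^2\rangle)/N$ and $\alpha_2=\langle hm\rangle/N$, this is exactly the prefactor displayed in the statement, completing the proof. The main obstacle is the middle step: choosing the correct Thomae relations and tracking their Gamma factors so that they recombine exactly with the beta prefactors, the key subtlety being that one must pass through the convergent representation of Proposition \ref{hypergeometric function} rather than specialize the boundary ($s=0$) formula of the preceding Proposition directly.
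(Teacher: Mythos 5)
Your proof is correct, and I could verify every step; note that the chapter itself states this proposition without proof (deferring to the cited paper on quotients of Fermat curves), so the honest comparison is with the route its own apparatus suggests, namely specializing the Dixon-type representation displayed just before the statement. Your central observation is exactly the right one: under the hypothesis $h+\langle hm\rangle+\langle hm^2\rangle=N$ (equivalently $\alpha_1+\alpha_2+\beta_2=1$, with $\beta_1=\alpha_2$), both lower parameters of that representation become $1$, its third upper parameter becomes $1+\beta_1$, and the parameter excess drops to $0$, so that series diverges and the preceding proposition cannot simply be quoted. Your repair also checks out. Pulling back by $u=t^N$ gives the factor $1/(B(\alpha_1,\beta_1)B(\alpha_2,\beta_2))$ as you claim, Proposition 5.6 with $\alpha_1+\alpha_2=1-\beta_2$ gives the convergent series ${}_3F_2(\alpha_1,1-\beta_1,1-\beta_2;1+\alpha_1,1;1)$ of excess $\beta_1+\beta_2$, and your composite identity is indeed a composition of two Thomae two-term relations
\[
{}_3F_2\!\left(\left.\begin{array}{c}a,b,c\\ d,e\end{array}\right.;1\right)
=\frac{\Gamma(d)\Gamma(e)\Gamma(s)}{\Gamma(a)\Gamma(s+b)\Gamma(s+c)}\,
{}_3F_2\!\left(\left.\begin{array}{c}d-a,e-a,s\\ s+b,s+c\end{array}\right.;1\right),
\qquad s=d+e-a-b-c:
\]
taking the distinguished parameter $a=\alpha_1$ first yields, via $1-\alpha_1-\beta_1=\beta_2$ and $1-\alpha_1-\beta_2=\beta_1$, the factor $\frac{\Gamma(1+\alpha_1)\Gamma(1-\alpha_1)}{\Gamma(\alpha_1)\Gamma(1+\beta_1)\Gamma(1+\beta_2)}$ times ${}_3F_2(1,1-\alpha_1,1-\alpha_1;1+\beta_1,1+\beta_2;1)$, and taking the distinguished parameter $1$ in that series (whose excess is $\alpha_1$) yields $\Gamma(\alpha_1)\Gamma(1+\beta_1)\Gamma(1+\beta_2)\,{}_3F_2(\alpha_1,\beta_1,\beta_2;1,1;1)$; the Gamma factors collapse to exactly your $\Gamma(1+\alpha_1)\Gamma(1-\alpha_1)$, and your beta-function bookkeeping then gives the stated prefactor $\bigl(\Gamma(\tfrac{N-h}{N})\Gamma(\tfrac{N-\langle hm^2\rangle}{N})/\Gamma(\tfrac{\langle hm\rangle}{N})\bigr)^2$. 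One small quibble: your stated reason that one transformation cannot suffice (``different parameter excess'') is not conclusive on its own, since a Thomae transformation does change the excess --- from $s$ to the chosen upper parameter; the correct argument is that a single application can only produce excess $\alpha_1$, $1-\beta_1$, or $1-\beta_2$, none of which equals $1$. This does not affect the validity of your proof.
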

Similar to Theorem \ref{nontrivial condition for Fermat},
we obtain the higher Abel-Jacobi image $\Phi_{k}(W_{k}-W_{k})(\varphi)$
and the sufficient condition that $W_k-W_k^-$ is not algebraically equivalent to zero
in $J(C^{1,m}_{N})$.
See \cite{1009.0096} for details.

\noindent
{\bf Acknowledgements.}
First of all, the author is grateful to Athanase Papadopoulos for valuable advice.
He also thanks Subhojoy Gupta, Takuya Sakasai, and Kokoro Tanaka for reading the manuscript and giving useful comments.
This work was partially supported by Fellowship for Research Abroad of Institute from the National Colleges of Technology and Grant-in-Aid for Young Scientists(B) (Grant No.~25800053).
Part of the work was also done while the author stayed at the Danish National Research Foundation Centre of Excellence, QGM (Centre for Quantum Geometry of Moduli Spaces) in Aarhus University.
He is very grateful for the warm hospitality of QGM.
Finally he would like to thank the referee for valuable comments on the preliminary version of this chapter.


\begin{thebibliography}{10}
\bibitem{0527.14008}
S. Bloch:
\newblock {Algebraic cycles and values of L-functions.}
\newblock {\em J. Reine Angew. Math.}, Vol. 350, pp. 94--108, 1984.

\bibitem{0471.14003}
J. A. Carlson:
\newblock {Extensions of mixed Hodge structures.}
\newblock {Journees de geometrie algebrique, Angers/France 1979, pp. 107-127, 1980.}
\bibitem{zbMATH03855282}
G. {Ceresa}:
\newblock {C is not algebraically equivalent to $C\sp-$ in its Jacobian.}
\newblock {\em {Ann. Math. (2)}}, Vol. 117, pp. 285--291, 1983.

\bibitem{0097.25803}
K. T. Chen:
\newblock {Integration of paths. A faithful representation of paths by non-
  commutative formal power series.}
\newblock {\em Trans. Am. Math. Soc.}, Vol.~89, pp. 395--407, 1959.

\bibitem{0301.58006}
K. T. Chen:
\newblock {Iterated integrals, fundamental groups and covering spaces.}
\newblock {\em Trans. Am. Math. Soc.}, Vol. 206, pp. 83--98, 1975.

\bibitem{0389.58001}
K. T. Chen:
\newblock {Iterated path integrals.}
\newblock {\em Bull. Am. Math. Soc.}, Vol.~83, pp. 831--879, 1977.

\bibitem{0764.30001}
H. M. Farkas; I. Kra:
\newblock {\em {Riemann surfaces. 2nd ed.}}
\newblock {Graduate Texts in Mathematics. 71. New York etc.: Springer-Verlag.
  xvi, 363 p. }, 1992.

\bibitem{0764.14015}
W. M. Faucette:
\newblock {Higher dimensional harmonic volume can be computed as an iterated
  integral.}
\newblock {\em Can. Math. Bull.}, Vol.~35, No.~3, pp. 328--340, 1992.

\bibitem{0783.14003}
W. M. Faucette:
\newblock {Harmonic volume, symmetric products, and the Abel-Jacobi map.}
\newblock {\em Trans. Am. Math. Soc.}, Vol. 335, No.~1, pp. 303--327, 1993.

\bibitem{0885.14002}
W. Fulton:
\newblock {\em {Intersection theory. 2nd ed.}}
\newblock {Ergebnisse der Mathematik und ihrer Grenzgebiete. 3. Folge. 2.
  Berlin: Springer. xiii, 470 p. }, 1998.

\bibitem{MR0260733}
P. A. Griffiths:
\newblock On the periods of certain rational integrals. {I}, {II}.
\newblock {\em Ann. of Math. (2) 90 (1969), 460-495; ibid. (2)}, Vol.~90, pp.
  496--541, 1969.

\bibitem{0418.14023}
B. H. Gross:
\newblock {On the periods of Abelian integrals and a formula of Chowla and
  Selberg. With an appendix by David E. Rohrlich.}
\newblock {\em Invent. Math.}, Vol.~45, pp. 193--211, 1978.

\bibitem{0654.14006}
R. M. Hain:
\newblock {The geometry of the mixed Hodge structure on the fundamental group.}
\newblock {Algebraic geometry, Proc. Summer Res. Inst., Brunswick/Maine 1985,
  part 2, Proc. Symp. Pure Math. 46, pp. 247--282 }, 1987. 

\bibitem{zbMATH02030179}
R. M. {Hain}:
\newblock {Iterated integrals and algebraic cycles: examples and prospects.}
\newblock In {\em {Contemporary trends in algebraic geometry and algebraic
  topology. Proceedings of the Wei-Liang Chow and Kuo-Tsai Chen memorial
  conference on algebraic geometry and algebraic topology, Tianjin, China,
  October 9--13, 2000}}, pp. 55--118. River Edge, NJ: World Scientific, 2002.

\bibitem{zbMATH05033769}
R. M. {Hain}; D. {Reed}:
\newblock {On the Arakelov geometry of moduli spaces of curves.}
\newblock {\em {J. Differ. Geom.}}, Vol.~67, No.~2, pp. 195--228, 2004.

\bibitem{zbMATH03788720}
B. {Harris}:
\newblock {Triple products, modular forms, and harmonic volumes.}
\newblock {\em {Contemp. Math.}}, Vol.~13, pp. 287--295, 1982.

\bibitem{0527.30032}
B. Harris:
\newblock {Harmonic volumes.}
\newblock {\em Acta Math.}, Vol. 150, pp. 91--123, 1983.

\bibitem{0523.14006}
B. Harris:
\newblock {Homological versus algebraic equivalence in a Jacobian (algebraic
  cycle/ integral/ Fermat curve).}
\newblock {\em Proc. Natl. Acad. Sci. USA}, Vol.~80, pp. 1157--1158, 1983.

\bibitem{1063.14010}
B. Harris:
\newblock {\em {Iterated integrals and cycles on algebraic manifolds.}}
\newblock {Nankai Tracts in Mathematics 7. River Edge, NJ: World Scientific.
  xii, 108~p. }, 2004.

\bibitem{1170.30001}
N. Kawazumi:
\newblock {Canonical 2-forms on the moduli space of Riemann surfaces.}
\newblock {Papadopoulos, A. (ed.), Handbook of Teichm\"uller theory.
  Volume II. Z\"urich: European Mathematical Society (EMS). IRMA Lectures in
  Mathematics and Theoretical Physics 13, pp. 217--237 }, 2009.

\bibitem{0513.14024}
S. Lang:
\newblock {\em {Introduction to Algebraic and Abelian Functions. 2nd ed.}}
\newblock {Graduate Texts in Mathematics, 89. New York - Heidelberg - Berlin:
  Springer-Verlag. IX, 169 p.}, 1982.

\bibitem{zbMATH03621923}
J. W. {Morgan}:
\newblock {The algebraic topology of smooth algebraic varieties.}
\newblock {\em {Publ. Math., Inst. Hautes \'Etud. Sci.}}, Vol.~48, pp.
  137--204, 1978.

\bibitem{zbMATH03927904}
S. {Morita}:
\newblock {Characteristic classes of surface bundles.}
\newblock {\em {Bull. Am. Math. Soc., New Ser.}}, Vol.~11, pp. 386--388, 1984.

\bibitem{zbMATH00179521}
S. {Morita}:
\newblock {The extension of Johnson's homomorphism from the Torelli group to
  the mapping class group.}
\newblock {\em {Invent. Math.}}, Vol. 111, No.~1, pp. 197--224, 1993.

\bibitem{zbMATH03882570}
D. {Mumford}:
\newblock {Towards an enumerative geometry of the moduli space of curves.}
\newblock {Arithmetic and geometry, Pap. dedic. I. R. Shafarevich, Vol. II:
  Geometry, Prog. Math. 36, pp. 271--328 }, 1983.

\bibitem{1236.14009}
N. Otsubo:
\newblock {On the Abel-Jacobi maps of Fermat Jacobians.}
\newblock {\em Math. Z.}, Vol. 270, No. 1-2, pp. 423--444, 2012.

\bibitem{0897.14004}
K. H. Paranjape; V.~Srinivas:
\newblock {Algebraic cycles.}
\newblock {Adhikari, S. D. (ed.), Current trends in mathematics and physics. A
  tribute to Harish-Chandra. New Delhi: Narosa Publishing House. pp. 71--86}, 1995.

\bibitem{0678.14005}
M. J. Pulte:
\newblock {The fundamental group of a Riemann surface: Mixed Hodge structures
  and algebraic cycles.}
\newblock {\em Duke Math. J.}, Vol.~57, No.~3, pp. 721--760, 1988.

\bibitem{zbMATH03136708}
R. {Ree}:
\newblock {Lie elements and an algebra associated with shuffles.}
\newblock {\em {Ann. Math. (2)}}, Vol.~68, pp. 210--220, 1958.

\bibitem{0135.28101}
L. J. Slater:
\newblock {\em {Generalized hypergeometric functions.}}
\newblock {Cambridge: At the University Press. XIII, 273 p. }, 1966.

\bibitem{1090.14007}
Y. Tadokoro:
\newblock {The harmonic volumes of hyperelliptic curves.}
\newblock {\em Publ. Res. Inst. Math. Sci.}, Vol.~41, No.~3, pp. 799--820,
  2005.

\bibitem{1133.14030}
Y. Tadokoro:
\newblock {The pointed harmonic volumes of hyperelliptic curves with
  Weierstrass base point.}
\newblock {\em Kodai Math. J.}, Vol.~29, No.~3, pp. 370--382, 2006.

\bibitem{1222.14058}
Y. Tadokoro:
\newblock {A nontrivial algebraic cycle in the Jacobian variety of the Klein
  quartic.}
\newblock {\em Math. Z.}, Vol. 260, No.~2, pp. 265--275, 2008.

\bibitem{1184.14018}
Y. Tadokoro:
\newblock {A nontrivial algebraic cycle in the Jacobian variety of the Fermat
  sextic.}
\newblock {\em Tsukuba J. Math.}, Vol.~33, No.~1, pp. 29--38, 2009.

\bibitem{1009.0096}
Y. Tadokoro:
\newblock {Nontrivial algebraic cycles in the {J}acobian varieties of some
  quotients of Fermat curves}, Internat. J. Math, to appear.

\bibitem{0168.18701}
A. Weil:
\newblock {\em {Foundations of algebraic geometry.}}
\newblock {American Mathematical Society Colloquium Publications. Vol. XXIX.
  Revised and enlarged edition. Providence, R.I.: American Mathematical Society
  (AMS). XX, 363 p. }, 1962.

\end{thebibliography}
\end{document}